\tikzstyle{invisivertex} = [black, shape=rectangle, minimum size=0pt, inner sep=3pt]
\tikzstyle{circledvertex} = [violet!80, draw, shape=circle, minimum size=0pt, inner sep=3pt]
\tikzstyle{dot} = [black, fill, shape=circle, minimum size=3pt, inner sep=2pt]
\tikzstyle{opendot} = [draw, black, shape=circle, minimum size=6pt, inner sep=1pt]
\tikzstyle{edgelabel} = [magenta, shape=circle, minimum size=6pt, inner sep=1pt]
\newcommand{\cL}{\mathcal{L}}
\newcommand{\cN}{\mathcal{N}}
\newcommand{\R}{\mathbb{R}}
\newcommand{\rank}{\mathrm{rank}}
\newcommand{\spn}{\mathrm{span}}
\newcommand{\Dfn}[1]{\emph{\bfseries #1}}
\newtheorem{theorem}{Theorem}
\newtheorem{corollary}[theorem]{Corollary}
\newtheorem{claim}[theorem]{Claim}
\newtheorem{proposition}[theorem]{Proposition}
\newtheorem{lemma}[theorem]{Lemma}
\theoremstyle{definition}
\newtheorem{definition}[theorem]{Definition}
\theoremstyle{remark}
\newtheorem{example}[theorem]{Example}
\newtheorem{note}[theorem]{Note}
\newtheorem{remark}[theorem]{Remark}
\newtheorem{observation}[theorem]{Observation}
\crefname{observation}{Observation}{Observations}
\crefname{claim}{Claim}{Claims}
\title[Line Shellings of Geometric Lattices]{Line Shellings of Geometric Lattices}
\author[Backman]{Spencer Backman}
\address{University of Vermont, Department of Mathematics and Statistics}
\email{\href{mailto:spencer.backman@uvm.edu}{spencer.backman@uvm.edu}}
\author[Dorpalen-Barry]{Galen Dorpalen-Barry}
\address{Texas A\&M University, Department of Mathematics}
\email{\href{mailto:dorpalen-barry@tamu.edu}{dorpalen-barry@tamu.edu}}
\author[Nathanson]{Anastasia Nathanson}
\address{University of Minnesota, School of Mathematics}
\email{\href{mailto:natha129@umn.edu}{natha129@umn.edu}}
\author[Partida]{Ethan Partida}
\address{Brown University, Department of Mathematics}
\email{\href{mailto:ethan_partida@brown.edu}{ethan\_partida@brown.edu}}
\author[Prime]{Noah Prime}
\address{University of Vermont, Department of Mathematics and Statistics}
\email{\href{mailto:noah.prime@uvm.edu}{noah.prime@uvm.edu}}
\begin{document}

\begin{abstract}
Inspired by Bruggesser--Mani's line shellings of polytopes, we introduce line shellings for the lattice of flats of a matroid: given a normal complex for a Bergman fan of a matroid induced by a building set, we show that the lexicographic order of the coordinates of its vertices is a shelling order.  This gives a new proof of Bj\"orner's classical result that the order complex of the lattice of flats of a matroid is shellable, and demonstrates shellability for all nested set complexes for matroids.
\end{abstract}

\keywords{matroid, Bergman fan, building set, nested set, normal complex, line shelling}
\maketitle

\section{Introduction}

We establish a link between two landmark results in the theory of shellability.  In 1971, Bruggesser--Mani proved that the boundary complex of a polytope is shellable \cite{bruggesser-mani}.  In 1980, Bj\"orner proved that the order complex of the lattice of flats of a matroid is shellable \cite{bjorner80}.    In the present article, we take direct inspiration from Bruggesser--Mani’s line shellings of polytopes for producing a new shelling of the order complex of a lattice of flats.     Our order applies to any nested set complex on the lattice of flats of a matroid, thus establishing their shellability for the first time.  

Our work takes place in the setting of tropical geometry.  Sturmfels showed that the tropicalization of a linear space depends only on the underlying matroid \cite{sturmfels2002solving}.  Ardila--Klivans introduced the Bergman fan of a matroid as a generalization of the tropicalization of a linear space \cite{ArdilaKlivans}, and they showed that the Bergman fan is triangulated by the order complex of the lattice of flats, thus providing a robust geometric realization for this classical abstract simplicial complex.

In their seminal work introducing wonderful compactifications of hyperplane arrangement complements, De Concini--Procesi defined building sets and nested set complexes on the intersection lattice of a hyperplane arrangement \cite{deConcini-Procesi}.  Feichtner--Kozlov extended these notions to more general posets \cite{FK}, and Feichtner--Sturmfels strengthened the result of Ardila--Klivans by demonstrating that the Bergman fan of a matroid is triangulated by any nested set complex on the lattice of flats \cite{feichtner-sturmfels}\footnote{The order complex is the nested set complex associated to the maximal building set.}.

The Bergman fan is central to Adiprasito--Huh--Katz's celebrated proof of the Heron--Rota--Welsh conjecture on the log-concavity of the coefficients of the characteristic polynomial of a matroid \cite{AHK}.  Those authors demonstrate that the Chow ring of a matroid, as introduced by Feichtner--Yuzvinsky \cite{FY}, behaves very much like the cohomology ring of a smooth projective toric variety, despite the fact that the Bergman fan is not complete.  Their work is built around the combinatorial ample cone of a matroid, which is the space of convex functions on the Bergman fan.  Such functions naturally generalize the space of polytopes whose normal fan is a fixed projective fan, although these ample classes no longer admit a natural interpretation via convex bodies.  

Later, Nathanson--Ross demonstrated that for a distinguished nonempty subcone of the ample cone, these convex functions admit a geometric interpretation as 
objects they call  \emph{cubical normal complexes}\footnote{For brevity's sake, we may refer to cubical normal complexes simply as normal complexes.}, and they showed that the degree of these classes in the Chow ring agrees with the classical volume of the corresponding normal complexes \cite{NR}.  In subsequent work, Nowak--O'Melveny--Ross utilized normal complexes for giving a ``volume proof'' of the Heron--Rota--Welsh conjecture \cite{nowak2025mixed}.

We further develop this close analogy between normal complexes for Bergman fans and polytopes by demonstrating that normal complexes can be utilized for producing lexicographic line shellings of Bergman fans.  

\begin{theorem}\label{thm1} Let $M$ be a matroid, $B$ a building set for the lattice of flats of $M$, and $N$ the associated nested set complex.  Let $\Sigma$ be the Bergman fan triangulated by $N$, and take $P$ a normal complex for $\Sigma$.  The lexicographic order on the coordinates of the vertices of $P$ is a shelling order for $N$.   
\end{theorem}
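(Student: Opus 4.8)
The plan is to run the Bruggesser--Mani argument with the normal complex $P$ playing the role of the polytope and the lexicographic order playing the role of the generic flight line. First I set up the dictionary. By the structure theory of normal complexes (cf.\ \cite{NR}), the vertices of $P$ are in bijection with the maximal cones of $\Sigma$, hence with the facets of $N$; write $v_F$ and $\sigma_F$ for the vertex and the maximal cone attached to a facet $F$ of $N$. More generally the faces of $P$ are indexed by the cones of $\Sigma$, and for a cone $\tau$ the subcomplex of $P$ supported on the cells indexed by cones containing $\tau$ is itself a normal complex $P_\tau$ for the link $\mathrm{lk}_\Sigma(\tau)$; this link is, up to its lineality, the Bergman fan of a minor of $M$ triangulated by the induced nested set complex, and the vertices of $P_\tau$ are exactly those $v_F$ for which $\tau$ is a face of $\sigma_F$. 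Since each $\sigma_F$ is simplicial, $v_F$ is a simple vertex of $P$: exactly $\rank(M)-1$ edges of $P$ meet it, one for each wall of $\sigma_F$, equivalently one for each ridge $F\setminus\{w\}$ of $N$ contained in $F$.

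Next, I realize the lexicographic order as a linear sweep. Choosing a section of $\R^E/\R\mathbf{1}$, the lexicographic order on the vertices of $P$ is the order induced by the linear functional $\ell_\varepsilon:=\sum_i \varepsilon^{\,i-1}x_i$ for all sufficiently small $\varepsilon>0$; fix such an $\ell$. Being lexicographic, $\ell$ is non-constant on every edge of $P$ and attains its minimum at a single vertex on each cell of $P$ on which it is bounded below. Order the facets of $N$ as $F_1,\dots,F_m$ with $\ell(v_{F_1})<\cdots<\ell(v_{F_m})$; this is the candidate shelling. Call a ridge $F_j\setminus\{w\}$ \emph{old at stage $j$} if it lies in some $F_k$ with $k<j$; equivalently, some maximal cone other than $\sigma_{F_j}$ on the wall dual to this ridge has strictly smaller $\ell$-value than $v_{F_j}$, i.e.\ the corresponding edge of $P$ at $v_{F_j}$ may be followed strictly downhill for $\ell$.

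The heart of the proof is the following no-false-local-minimum statement: for every cone $\tau$ of $\Sigma$ and every maximal cone $\sigma\supseteq\tau$, if $v_\sigma$ is not a global $\ell$-minimum among the (finitely many) vertices of $P_\tau$, then some wall $W$ of $\sigma$ with $\tau\subseteq W$ carries a maximal cone $\sigma'\ne\sigma$ with $\ell(v_{\sigma'})<\ell(v_\sigma)$; equivalently, $\ell$ has no local minimum other than its global one on the adjacency graph of maximal cones of $\mathrm{lk}_\Sigma(\tau)$. I would prove this by induction on $\rank(M)$: when $\tau\ne\{0\}$ it reduces to a matroid of smaller rank via $\mathrm{lk}_\Sigma(\tau)$, using that $\ell$ descends to a linear functional on $P_\tau$ whose induced order is again lexicographic-type so that genericity is inherited; when $\tau=\{0\}$ it must be proved directly, and this is exactly where convexity enters — the normal-complex analogue of the elementary fact, used by Bruggesser--Mani, that a generic linear functional on a convex polytope has a unique local minimum. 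Concretely, for $\tau=\{0\}$ one argues that the maximal cells $P_\rho$ of $P$ (indexed by the rays $\rho$ of $\Sigma$) are genuine polyhedra on each of which $\ell$ is minimized at a unique vertex, and that the ample (convex) class underlying $P$ forces these minimizers to be consistent along shared faces, so that a downhill walk through the $1$-skeleton of $\bigcup_\rho P_\rho$ never stalls before reaching the global minimum; connectedness of $\bigcup_\rho P_\rho$, which makes ``the global minimum'' meaningful, is the normal-complex shadow of the codimension-one connectivity of Bergman fans, and that no closing-up ``at infinity'' is needed — in contrast to the compact polytope case — reflects that $\Sigma$ is a proper, incomplete subfan of $\R^E/\R\mathbf{1}$.

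Finally I assemble the shelling. Fix $i<j$, and let $\tau=\{0\}$ if $F_i\cap F_j=\emptyset$, and otherwise let $\tau$ be the cone of $\Sigma$ corresponding to the face $F_i\cap F_j$ of $N$. In either case $\sigma_{F_i},\sigma_{F_j}\supseteq\tau$ while $\ell(v_{F_i})<\ell(v_{F_j})$, so $v_{F_j}$ is not a global $\ell$-minimum of $P_\tau$; the key lemma then supplies a wall $W\supseteq\tau$ of $\sigma_{F_j}$, dual to a ridge $F_j\setminus\{w\}$, together with a maximal cone $\sigma_{F_k}\ne\sigma_{F_j}$ on $W$ with $\ell(v_{F_k})<\ell(v_{F_j})$, i.e.\ $k<j$. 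Hence $F_i\cap F_j\subseteq F_j\setminus\{w\}=F_k\cap F_j$ with $k<j$, which is the standard criterion for $F_1,\dots,F_m$ to be a shelling of $N$ (and for $j>1$ any choice of $i<j$ already shows $\overline{F_j}\cap\bigcup_{i<j}\overline{F_i}$ is nonempty, hence pure of the right dimension). This reproves Bj\"orner's theorem and extends shellability to every nested set complex. The sole non-formal ingredient is the no-false-local-minimum lemma — that the lexicographic functional has no spurious local minima on $P$ or on any of its links — and I expect it to be the main obstacle: it is precisely the point where the convexity of the normal complex must do the work that convexity of the polytope does in Bruggesser--Mani, and where any hypotheses on the building set are invoked.
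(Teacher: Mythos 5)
Your overall architecture matches the paper's: induction on dimension, the dictionary between vertices of the normal complex and facets of $N$, the fact that faces of the normal complex are normal complexes of links (restriction--contraction minors), the observation that the lexicographic order restricts to a lexicographic-type order on each such face, and the two-case assembly (shared ridge versus disjoint facets). All of that is carried out in the paper essentially as you describe, via Lemma \ref{lem:intersection} and Proposition \ref{prop:lex_link}.

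The genuine gap is in the one step you yourself flag as the main obstacle: the ``no false local minimum'' claim for $\tau=\{0\}$, which you propose to prove by arguing that convexity of the ample class forces a downhill walk through the $1$-skeleton to reach the global minimum. This cannot work as stated. The normal complex is not a convex body --- it is a union of combinatorial cubes that need not even be CAT(0) (see Remark \ref{rem:cubical}) --- and the paper's Section \ref{sec:examples} exhibits a \emph{generic} functional $\gamma'=(1,100,101,-1000)$ on the broom matroid for which the induced order already fails to be a shelling at the second facet: a generic sweep \emph{does} have spurious local minima on the adjacency graph of maximal cones. So the Bruggesser--Mani convexity principle is exactly what is unavailable here, and the lexicographic hypothesis must enter in an essential, non-generic way. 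The paper's replacement is combinatorial: it introduces the NL-order (a generalization of Bj\"orner's EL-labeling to arbitrary building sets), shows that a facet is NL-minimal iff its label sequence has no descent (Proposition \ref{prop:new_NL_min}), shows that descents survive passage to the link of any flat $Z$ other than the descent position (Claim \ref{claim:min-element}), and proves by a separate induction --- with an explicit coordinate computation in the one-dimensional base case (Claim \ref{claim:dimension-1-minimal}) --- that the lexicographic minimum of the normal complex coincides with the NL-minimum (Claim \ref{claim:min-match}). With that in hand, a non-minimal facet has a descent, the descent persists in a suitable link, and induction in that lower-dimensional normal complex produces the required earlier neighbor across a ridge; no global convexity statement about the $1$-skeleton is ever invoked. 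Your proposal is missing this entire mechanism, and without it the disjoint-facets case does not close.
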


While line shellings of polytopes are determined by generic linear functionals, it is notable that Theorem \ref{thm1} specifically utilizes a lexicographic order \footnote{It is well-known that lexicographic orders are determined by certain (generic) linear functionals.}.  As demonstrated in Section \ref{sec:examples}, genericity is not sufficient for producing line shellings of Bergman fans via normal complexes.  Our choice to pursue this lexicographic order is motivated by Bj\"orner's EL-labeling and the tropical nature of our construction.  Although our order is different from Bj\"orner's order in the case of the maximal building set, the two are locally similar and we leverage a generalization of this connection (see Proposition \ref{claim:dimension-1-minimal}) in our proof of Theorem \ref{thm1}.

The proof of Theorem \ref{thm1} proceeds by induction on the dimension of the normal complex, blending together geometry and combinatorics.  The key fact which we employ is that each \emph{facet} of a normal complex is itself a normal complex for a smaller matroid,
thus our argument mimics the inductive verification of line shellings for polytopes (see the proof of Theorem \ref{lineshellingtheorem}).   In Remark \ref{recursiveELremark}
we explain how Bj\"orner's EL-shelling admits a recursive proof which draws these perspectives together.

For extending our result beyond the setting of order complexes to general nested set complexes for matroids, we introduce a natural combinatorial generalization of Bj\"orner's EL-shelling order of the order complex of the lattice of flats, which is defined for an arbitrary nested set complex.
We call this order the nested lexicographic order (NL-order).  This aspect of our proof, utilizing the NL-order, is somewhat idiosyncratic. 
At the time of writing, we do not know whether the NL-order itself is a shelling order.  We are able to sidestep this uncertainty; our proof only requires a weaker property of the NL-order which we are able to prove directly.
In the future, we hope to resolve the question of whether the NL-order is indeed a shelling order.

Prior to this work, it was known that each nested set complex is Cohen--Macaulay, a weaker property than being shellable.  This follows from Munkres' theorem that the CM-property is a topological invariant \cite{munkres1984topological}, Bj\"orner's theorem that the order complex of the lattice of flats is shellable, and Feichtner--M\"uller's theorem that all nested set complexes are obtainable from the order complex by a sequence of combinatorial blow-downs \cite{FM}.

There are several different nested set complexes for matroids which appear in the literature, and some have previously been shown to be shellable.  Trappmann--Ziegler showed that the poset of strata of $\overline{M}_{0,n}$ is shellable \cite{trappmann-ziegler, feichtner} \footnote{This the nested set complex associated to the minimum building set for the braid matroid.}.  Braden--Matherne--Proudfoot--Huh--Wang introduced the augmented Bergman fan of a matroid \cite{braden-huh-matherne-proudfoot-wang}, and Bullock--Kelley--Reiner--Ren--Shemy--Shen--Sun--Tao--Zhang showed that the augmented Bergman fan is shellable  \cite{minnesota-REU-2021}.  Crowley--Huh--Larson--Simpson--Wang introduced the polymatroid Bergman fan \cite{crowley-huh-larson-simpson-wang}.  Our main Theorem \ref{thm1} gives a uniform proof of shellability for all of these complexes.

While this work was in the final stages of preparation, we learned of the independent work of Coron--Ferroni--Li who establish \emph{vertex decomposability} of nested set complexes for matroids, which also implies their shellability \cite{coron-ferroni-li}.  To the best of our knowledge their constructions do not appear to be immediately related to the tropical polyhedral perspective presented here.

The remainder of this paper is organized as follows.
In Section \ref{sec:background}, we present the relevant background on shellings, building sets, nested set complexes, Bergman fans, and normal complexes.
In Section \ref{sec:examples}, we illustrate the main result and some of its subtleties with several examples.  
In preparation for the proof of Theorem \ref{thm1}, we present several combinatorial results in Section \ref{sec:links}.
The proof of Theorem \ref{thm1} is contained in Section \ref{sec:proof-of-main}.

\subsection{Additional Related Works}

We describe here some further connections to works in the literature.   While this work was in preparation, Balla--Joswig--Weis posted a preprint where they utilize line shellings for proving that tropical hypersurfaces are shellable \cite{balla2025shellings}-- their line shellings do not make use of normal complexes.  To the best of our knowledge, these two works are the only examples of line shellings being utilized for proving shellability of non-complete fans.

In work of Amini and Piquerez, ``Homology of Tropical Fans'' \cite{amini2021homology}, those authors introduce a notion of a \emph{shellable tropical fan}.  It is unclear at the time of writing what is the precise relationship between their fans and the classical notion of shellability.  In particular, it was observed by June Huh \footnote{This observation was made during the BIRS Workshop ``Algebraic Aspects of Matroid Theory'' March 11-17, 2023.} that Amini and Piquerez's shellable tropical fans include all complete fans, and it is a major open question in polyhedral geometry whether such fans are always shellable in the traditional sense.

In work of Adiprasito-Bj\"orner \cite{AB}, the authors investigate combinatorial tropical Lefschetz section theorems.  An important object in that work is the positive side of a Bergman fan.  Confirming a conjecture of Mikhalkin-Ziegler, they  prove that this fan is Cohen-Macaulay and ask whether it is always shellable.  It would be interesting to see if the techniques we introduce in this article can be applied to make progress on their question.

In work of Heaton and Samper \cite{heaton2020dual}, line shellings of (dual) matroid polytopes were investigated. They explore connections between such line shellings and the theory of matroid activities.  Beyond their work and ours, we are not aware of other instances where line shellings of polyhedral complexes have been investigated in the setting of matroid theory.

\subsection*{Acknowledgments} We thank Federico Ardila-Mantilla, Eva-Marie Feichtner, Carly Klivans, June Huh, Vincent Pilaud, Dasha Poliakova, Vic Reiner, Dustin Ross, Raman Sanyal, and David Speyer for helpful conversations and comments.  We thank Basile Coron, Luis Ferroni, and Shiyue Li for informing us of their work and for coordinating the posting of our articles.  The first author was supported by NSF Grant (DMS-2246967) and Simons Foundation Gift \# 854037.  The third author received partial support from NSF Grant (DMS-2054436).  The fourth author was partially supported by NSF Grant (DMS-2053288), a U.S. Department of Education GAANN award, and the Simons Foundation (SFI-MPS-SDF-00015018).

\section{Background}\label{sec:background}
In this section, we review some constructions from topological combinatorics, matroid theory, tropical geometry, and polyhedral geometry.

\subsection{Shellings}
We recall shellings of polytopal complexes.
For more background on shellings of polytopal complexes, we refer the reader to \cite[Chapter 8]{ziegler}. For more information on shellings of order complexes of posets, we refer the reader to Wachs \cite{wachs}.

\begin{definition}[Shellable Polytopal Complex](See  \cite[Definition 8.1]{ziegler}) 
Let $P$ be a pure polytopal complex.
A linear order $\prec$ on the facets of $\Delta$ is a \Dfn{shelling order} if either $P$ is zero-dimensional (meaning that all facets are points) or it satisfies the following two conditions
    \begin{enumerate}[(i)]
        \item The boundary complex of the first facet admits a shelling order, and 
        \item For each facet $F$, the boundary complex of the intersection of $F$ with all previous facets under $\prec$ is a shellable polytopal complex of dimension one less, i.e., the boundary of 
        \[
        F \cap \bigcup_{\substack{F': facet \\ F' \prec F}} F'
        \]
        is dimension one less than $P$ and admits a shelling order.
    \end{enumerate}
A pure polytopal complex $P$ is \Dfn{shellable} if it admits a shelling order \footnote{Shellability for non pure complexes was introduced by Bj\"orner--Wachs \cite{BjornerWachs-shelling}.}.  
\end{definition}

Let $P$ be a pure polytopal complex.  Let $V$ be a Euclidean vector space and $\{v_N \in V : N \text{ facet of } \Delta\}$ a collection of distinct points in $V$.
If there exists a vector $\gamma$ such that the total order
\[
N \prec N' \iff \langle v_N, \gamma \rangle < \langle v_{N'}, \gamma \rangle 
\]
is a shelling order for $\Delta$, then we call this order a \Dfn{line shelling} of $\Delta$.
Bruggesser and Mani {\cite[Proposition 2]{bruggesser-mani}} demonstrated that the boundary complex of a polytope admits a line shelling by taking the inner product of the vertices of the \emph{dual polytope} with a fixed \emph{generic vector}; see also \cite[Sections 3.1,3.4,8.2]{ziegler}.

For the purposes of this article, it is conceptually useful to package line shellings of polytopes in the following way.

\begin{theorem}[\!\!{{\cite[Proposition 2]{bruggesser-mani}}}]\label{lineshellingtheorem}\
Let $P \subsetneq \mathbb{R}^n$ be a polytope and $\Sigma$ its normal fan.  Let $\gamma \in \mathbb{R}^n$ be a generic vector.  The order of the vertices of $P$ according to their inner product with $\gamma$ is a line shelling of $\Sigma$.
\end{theorem}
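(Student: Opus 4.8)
The plan is to recognize Theorem~\ref{lineshellingtheorem} as a restatement, in the language of normal fans, of the classical Bruggesser--Mani ``rocket'' argument. The work consists of (a) translating the statement into a claim about the boundary complex of the polar dual of $P$, (b) running the rocket flight along the line in direction $\gamma$ and identifying the resulting crossing order of facets with the order of the vertices of $P$ by their pairing with $\gamma$, and (c) verifying that this crossing order satisfies the recursive definition of a shelling.

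First I would fix the dictionary. After translating $P$ so that the origin lies in its interior (which does not change $\Sigma$), pass to the polar dual $P^{\vee} = \{\, w : \langle w, u \rangle \le 1 \text{ for all } u \in P \,\}$. The normal fan $\Sigma$ is then identified with the boundary complex $\partial P^{\vee}$: each vertex $v$ of $P$ corresponds to a facet $F_v$ of $P^{\vee}$ lying on the hyperplane $H_v = \{\, w : \langle w, v \rangle = 1 \,\}$, and this identification respects the face posets. So it suffices to show that ordering the facets $F_v$ by increasing value of $\langle v, \gamma \rangle$ is a shelling of $\partial P^{\vee}$. Genericity of $\gamma$ ensures the values $\langle v, \gamma \rangle$ are pairwise distinct and nonzero, and that the line $\mathbb{R}\gamma$ --- which passes through the interior point $0$ of $P^{\vee}$ --- meets no face of $P^{\vee}$ of codimension at least two.

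Next I would run the rocket. The point $x(t) = t\gamma$ lies strictly beyond the hyperplane $H_v$ exactly when $t\langle \gamma, v \rangle > 1$, i.e.\ once $t$ passes the threshold $\tau_v = 1/\langle \gamma, v \rangle$. Letting $t$ traverse the projective line $\mathbb{R} \cup \{\infty\}$ once, starting just after $0$, increasing to $+\infty$, reappearing at $-\infty$, and returning to $0$, a short sign analysis shows the facets $F_v$ are crossed in strictly decreasing order of $\langle v, \gamma \rangle$: first the facets with $\langle \gamma, v \rangle > 0$ at positive $t$ (in decreasing order of $\langle \gamma, v \rangle$), then, after the passage through infinity, those with $\langle \gamma, v \rangle < 0$ at negative $t$ (again in decreasing order of $\langle \gamma, v \rangle$). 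Replacing $\gamma$ by $-\gamma$, which is still generic, turns this into the increasing order, so it remains to prove that this crossing order $F_1, F_2, \dots, F_m$ is a shelling.

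The heart of the argument --- the step I expect to be the main obstacle --- is verifying the recursive shelling conditions for the crossing order $F_1, F_2, \dots, F_m$. The geometric content is that at each stage $j$ the union $F_1 \cup \cdots \cup F_{j-1}$ is exactly the part of $\partial P^{\vee}$ visible from the current rocket position (during the ascending phase, $t > 0$) or invisible from it (during the descending phase, $t < 0$), and hence a shellable ball by induction on dimension; and that the intersection $F_j \cap (F_1 \cup \cdots \cup F_{j-1})$ is the union of those facets of $F_j$ that $F_j$ shares with an earlier facet, which forms an initial segment of a shelling of $\partial F_j$ obtained by projecting the rocket flight into the hyperplane spanned by $F_j$. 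The delicate point is gluing the ascending (visible-region) phase to the descending (invisible-region) phase across the passage through infinity; this is exactly the inductive pattern that the proof of Theorem~\ref{thm1} will mimic. Alternatively, once the dictionary of the second paragraph is in place, one may simply invoke \cite[Proposition~2]{bruggesser-mani} or \cite[Theorem~8.12]{ziegler}.
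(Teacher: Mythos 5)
Your proposal is correct, but it takes a genuinely different route from the one sketched in the paper. You give the classical Bruggesser--Mani rocket argument: pass to the polar dual $P^{\vee}$, fly along the line $\mathbb{R}\gamma$ through the projective line, check that the crossing order of the facets $F_v$ agrees (after replacing $\gamma$ by $-\gamma$) with the order of the vertices $v$ of $P$ by $\langle v,\gamma\rangle$, and verify shellability via the visible/invisible-region induction (or simply cite \cite{bruggesser-mani} or Ziegler, which is legitimate here since the statement is itself attributed to them). The paper instead sketches the argument only for $P$ simple and works directly with the simplicial shelling criterion on the chambers of $\Sigma$: given chambers $N \prec N'$, if $N \cap N' = \emptyset$ it uses the acyclic orientation that $\gamma$ induces on the $1$-skeleton of $P$ to find an in-edge at the vertex dual to $N'$, and if $N \cap N'$ contains a ray $\tilde\rho$ it restricts to the facet of $P$ normal to $\tilde\rho$ and inducts on dimension. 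Your version buys generality (arbitrary polytopes, not just simple ones) and is the standard textbook proof; the paper's version is deliberately structured as a two-case induction on dimension because that exact case split --- disjoint chambers handled by a local swap, chambers sharing a ray handled by restriction to a facet and induction --- is the template that the proof of Theorem \ref{thm1} mimics for Bergman fans, where no polar dual or rocket flight is available.
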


When a polytopal complex is simplicial, there is a simpler description of a shelling order (see \cite[Equation 7.3]{bjorner}).

\begin{definition}[Shellable Simplicial Complex]
Let $\Delta$ be a pure simplicial complex.
A linear order $\prec$ on the facets of $\Delta$ is a \Dfn{shelling order} if for all facets $N,N'$ with $N \prec N$', there exists some $N''$ such that $N'' \prec N$ and $X \in N$ such that
\[
N \cap N' \subseteq N \cap N'' = N \setminus\{X\}\,.
\]
A simplicial complex $\Delta$ is \Dfn{shellable} if it admits a shelling order.    
\end{definition}

We briefly sketch a proof of Theorem \ref{lineshellingtheorem} in the case when $P$ is simple (so that $\Sigma$ is simplicial).  This proof will be a model for our own proof of Theorem \ref{thm1}.

\begin{proof}
We proceed by induction on $P$ with the base case being a point.  Let $N$ and $N'$ be chambers of the normal fan with $N \prec N'$.  We wish to find some chamber $N''$ with $N'' \prec N'$ such that $N\cap N' \subseteq N \cap N'' = N \setminus \{\rho\}$ for some ray $\rho \in N'$.  Case 1: $N \cap  N' =\emptyset$.  Here we use the fact that $\gamma$ induces an acyclic orientation of the 1-skeleton of $P$.  Let $v$ be the vertex corresponding to $N'$.  As $v$ is not the first vertex in our order, there is some edge $\overline{wv}$ which is oriented towards $v$.  We can take $N''$ to be the chamber dual to the vertex $w$.  Case 2: There exists some ray ${\Tilde{\rho}} \in  N \cap  N'$.  We may look at the facet of $P$ which is normal to ${\Tilde{\rho}}$ (this facet contains the vertices corresponding to $N$ and $N'$), and apply induction on dimension.
\end{proof}

The goal of this article is to show how the above theorem extends to a certain collection of non-complete fans, namely Bergman fans of matroids equipped with building sets. The following classical order will be of essential interest for us.

\begin{definition}\label{def:usual-lex-order}
    Let $E$ be a set and $<_E$ be any linear order on $E$.
    The \Dfn{lexicographic order} on $k$-tuples of elements in $E$ is a linear order defined by $(a_1,\dots, a_k) <_{\text{lex}} (b_1,\dots, b_k)$ if and only if there is some $m \geq 0$ such that $a_m <_E b_m$ and $a_i  = b_i$ for $i < m$.
\end{definition}

The following definition is due to Bj\"orner \cite[Section 2]{bjorner80}.

\begin{definition}[EL-labeling of a Poset]\label{def:el}
Let $\cL$ be a bounded poset. 
We use $\mathscr{E}$ to denote the edges of the Hasse diagram of $\cL$.
A map $\lambda: \mathscr{E} \to \mathbb{Z}_{\geq 0}$ is an \Dfn{edge-lexicographic labeling} if for every closed interval $[X,Y]$ of $\cL$ there is a unique maximal chain whose label sequence is strictly increasing and this chain lexicographically precedes all other maximal chains in the closed interval.
\end{definition}

If such an EL-labeling exists, then Björner proves that the lexicographic order of the EL-labelings of the maximal chains is a shelling order for the order complex of $\cL$ (we may denote this order by $\prec_{EL}$) \cite[Theorem 2.3]{bjorner80}.
When $\cL$ is a geometric lattice, Björner gives a simple construction to produce an EL-labeling, thus establishing their shellability.
Let $\cL_1$ be the set of atoms of $\cL$ (we will suppose for simplicity that $\cL_1$ is labeled by $\mathbb{Z}_{>0}$).
Define a labeling $\lambda: \mathscr{E} \to \mathbb{Z}_{>0}$ sending the edge $(X,Y)$ to the smallest atom below $Y$ but not below $X$.
This is an EL-labeling of the geometric lattice \cite[Theorem 3.7]{bjorner80}.

In the process of proving Theorem \ref{thm1}, we will introduce and relate two different facet orders, one combinatorial (the nested lexicographic order) and one geometric (the normal complex order).
Motivated by our considerations we introduce the following natural definitions -- we will eventually demonstrate our two orders are \emph{weakly locally equivalent}.

\begin{definition}[Local Equivalence]\label{def:locally-equivalent}
Let $\prec_A$ and $\prec_B$ be two linear orders on the facets of a simplicial complex $\Delta$.
If, for every codimension $1$ face $F$ of $\Delta$, the restrictions of $\prec_A$ and $\prec_B$ to the facets of $\Delta$ containing $F$ coincide, then we say that these two orders are \Dfn{locally equivalent}.
\end{definition}

\begin{definition}[Weak Local Equivalence]\label{def:weakly-locally-equivalent}
Let $\prec_A$ and $\prec_B$ be two linear orders on the facets of a simplicial complex $\Delta$.
If, for every codimension one face $F$ of $\Delta$, the restrictions of $\prec_A$ and $\prec_B$ to the facets of $\Delta$ containing $F$ have the same minimum element then we say that the two orders are \Dfn{weakly locally equivalent}.
\end{definition}

\subsection{Matroids and the nested set complex}

We refer the reader to Oxley \cite{oxley} for an introduction to matroid theory.  Let $M$ be a loopless matroid on 
a finite ground set $E$, and let $\cL(M)$ be the lattice of flats of $M$.
When no confusion will arise, we may write $\cL$ instead of $\cL(M)$.
We do not assume our matroids are simple, so we view $\cL(M)$ and $\cL$ as \emph{labeled} posets.  
Given a flat $X$ of $\cL$, the \Dfn{restriction} of $\cL$ to $X$ is defined as the matroid whose flats are $\cL|_X = \{Y\in \cL \colon Y\leq X\}$ and the \Dfn{contraction} of $\cL$ along $X$ is defined as the matroids whose flats are $\cL^X = \{(Y\setminus X) \colon X \leq Y \}$.

A \Dfn{building set} $B \subseteq \cL \setminus \{\emptyset\}$ is a subset of the flats of $\cL$ such that for all $X\in \cL\setminus \{\emptyset\}$, the map
\begin{align*}
\left(\prod_{Y \in \,\max (B_{\leq X})} [\emptyset,Y] \right) & \to [\emptyset,X]\\
(Z \in [\emptyset, Y]  \colon Y \in \max (B_{\leq X})) & \mapsto \bigvee_{Y \in \max (B_{\leq X})} Z\,,
\end{align*}
is a poset isomorphism, where $\max (B_{\leq X})$ denotes the containment-maximal elements of $B$ that lie weakly below\footnote{This definition is often stated with a strict inequality and $X \not\in B$. Here we drop the strictness, but note that if $X\in B$ then $\max(B_{\leq X}) = \{X\}$ and the map is just the identity.} the flat $X$.
We use $\max(B) := \{F_1, \ldots, F_m\}$ to denote the containment-maximal elements of the building set.
There is an alternate characterization of building sets, which we will find useful in our proofs.

\begin{proposition}[\!\!\!{{\cite[Section 2.3]{deConcini-Procesi}}{\cite[Proposition 2.11]{backman-danner}}}]\label{prop:backman-danner}  Let $M$ be a matroid with lattice of flats $\cL$, then 
$B \subseteq \cL \setminus \{\hat{0}\}$ is a building set if and only if $B$ contains the connected flats of $\cL$, and for all $X,Y \in B$ with $X \cap Y \not= \emptyset$, we have $X\vee Y \in B$.
\end{proposition}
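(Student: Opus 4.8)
The plan is to prove the two implications separately, in each case matching the maximal building-set elements below a flat with the connected-component decomposition of the associated restricted matroid. I will use freely the standard dictionary between geometric lattices and loopless matroids: the meet of two flats is their set-theoretic intersection; every flat is the join of the atoms it contains; the lattice of flats of a direct sum $M_1 \oplus M_2$ is $\cL(M_1) \times \cL(M_2)$, with isomorphism $(F_1, F_2) \mapsto F_1 \vee F_2$; consequently the interval $[\hat 0, X] = \cL|_X$ is the lattice of flats of the restriction $M|_X$, and it is directly indecomposable precisely when $X$ is a connected flat. I also record that every atom of $\cL$ is a connected flat, since a loopless rank-one matroid is connected.

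\emph{A building set satisfies the two conditions.} Let $B$ be a building set and $Z$ a connected flat. Applying the defining poset isomorphism with $X = Z$ writes $[\hat 0, Z]$ as a product of the nontrivial intervals $[\hat 0, Y]$, $Y \in \max(B_{\le Z})$; since $[\hat 0, Z]$ is directly indecomposable there is only one factor $[\hat 0, Y]$, and as the isomorphism is the join map its image is $[\hat 0, Y]$, forcing $Y = Z$ and hence $Z \in B$. Next take $X, Y \in B$ with $X \cap Y \neq \emptyset$, set $Z = X \vee Y$, and write $\max(B_{\le Z}) = \{Y_1, \dots, Y_k\}$. Injectivity of the building-set isomorphism $\varphi_Z \colon \prod_i [\hat 0, Y_i] \to [\hat 0, Z]$ forces $Y_i \wedge Y_j = \hat 0$ whenever $i \neq j$: otherwise the two tuples placing $Y_i \wedge Y_j$ in slot $i$, respectively slot $j$, and $\hat 0$ elsewhere are distinct but have the same image. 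Since $X, Y \le Z$ both lie in $B_{\le Z}$, we get $X \le Y_i$ and $Y \le Y_j$ for some $i, j$; if $i \neq j$ then $\emptyset \neq X \cap Y = X \wedge Y \le Y_i \wedge Y_j = \hat 0$, which is absurd, so $i = j$ and then $Z = X \vee Y \le Y_i \le Z$ gives $Z = Y_i \in B$.

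\emph{The two conditions imply a building set.} Suppose $B$ contains every connected flat and is closed under joins of intersecting pairs. Fix $X \in \cL \setminus \{\hat 0\}$ and write $\max(B_{\le X}) = \{Y_1, \dots, Y_k\}$, a nonempty set since $X$ lies above an atom and atoms are connected flats in $B$. I will establish three facts: (i) $Y_i \cap Y_j = \emptyset$ for $i \neq j$, for otherwise $Y_i \vee Y_j \in B$ by hypothesis while $Y_i \vee Y_j \le X$, contradicting maximality of $Y_i$ (or $Y_j$) in $B_{\le X}$; (ii) $Y_1 \vee \dots \vee Y_k = X$, because each atom $a \le X$ is a connected flat, hence lies in $B_{\le X}$, hence below some $Y_i$, and $X$ is the join of its atoms; (iii) each connected component $C$ of $M|_X$ is a connected flat of $M$ with $C \le X$, hence $C \in B_{\le X}$, hence $C \le Y_i$ for some $i$. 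From (i) and (ii), $X = Y_1 \sqcup \dots \sqcup Y_k$ as a set; together with (iii) and disjointness of distinct $Y_j$, this forces each $Y_i$ to equal the union of the connected components of $M|_X$ it contains. Hence $M|_X = \bigoplus_i M|_{Y_i}$, so the resulting isomorphism $[\hat 0, X] = \cL(M|_X) \cong \prod_i \cL(M|_{Y_i}) = \prod_i [\hat 0, Y_i]$ is exactly the join map from the definition of a building set. As $X$ was arbitrary, $B$ is a building set.

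I expect the delicate point to be the last step of the reverse direction — combining the three local facts (pairwise disjointness of the $Y_i$, their join equaling $X$, and each component of $M|_X$ lying inside a single $Y_i$) into the global decomposition $M|_X = \bigoplus_i M|_{Y_i}$, and then identifying the canonical product isomorphism with the join map. The remainder is a careful deployment of the geometric-lattice/matroid correspondence (indecomposability versus connectedness, flats of restrictions and of direct sums, flats as joins of atoms), which is best set up once at the start so that both implications read as short arguments.
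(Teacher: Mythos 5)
The paper does not prove this proposition; it is quoted from De Concini--Procesi and Backman--Danner, so there is no internal argument to compare yours against. On its own terms your proof is correct and complete in both directions: the forward direction correctly extracts membership of connected flats from direct indecomposability of $[\hat 0,Z]$ and extracts join-closure from injectivity of the product map (the two-tuple trick showing $Y_i\wedge Y_j=\hat 0$ is exactly the right observation), and the reverse direction correctly reduces to the decomposition $M|_X=\bigoplus_i M|_{Y_i}$ and identifies the canonical isomorphism with the join map.

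One small imprecision worth fixing: you assert that the set equality $X=Y_1\sqcup\dots\sqcup Y_k$ follows ``from (i) and (ii).'' Pairwise disjointness plus $\bigvee_i Y_i=X$ does not by itself give $\bigcup_i Y_i=X$ (in $U_{2,3}$ the disjoint flats $\{1\}$ and $\{2\}$ have join $\{1,2,3\}$ but union $\{1,2\}$). What you actually need is the stronger fact established inside your proof of (ii) --- every atom below $X$ lies below some $Y_i$, and in a loopless matroid every element of $X$ lies in such an atom --- or alternatively fact (iii), since the components partition $X$ and each lies in some $Y_i$. Either route closes the step in one line, so this is a matter of attribution rather than a genuine gap.
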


We now introduce some of the principal objects of our study. Namely, the nested sets of a building set. Note that we follow Postnikov's convention \cite[Definition 7.3]{postnikov} and require that all nested sets contain $\max(B)$.

\begin{definition}\label{defn:nested}
    A subset $N$ of a building set $B$ is \Dfn{nested} if it satisfies the following conditions:
    \begin{enumerate}
        \item $N$ contains $\max(B)$.
        \item For any collection of $\ell\geq 2$ pairwise incomparable flats $X_1,\ldots X_\ell$ of $N$, the join $\bigvee_{i=1}^\ell X_i$ is not in $B$.
    \end{enumerate}
A subset $N$ of $B\setminus \max(B)$ is \Dfn{reduced nested} if $N \cup \max(B)$ is nested.
\end{definition}

\begin{observation}\label{obs:forest}
    The lattice of flats of $M$ restricted to the elements of a nested set $N$ forms a forest poset. That is, if $X,Y$ and $Z$ are nonempty flats of $N$ with $X< Y$ and $X< Z$, then either $Y\leq Z$ or $Z\leq Y$.
    To see why this is true, suppose that $Y$ and $Z$ are incomparable. As the $X \subseteq Y \cap Z$, it follows from Proposition \ref{prop:backman-danner} that $Y\vee Z\in B$. This contradicts the assumption that $N$ is nested.
\end{observation}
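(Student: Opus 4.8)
The plan is to establish the forest property by contradiction, combining the alternate characterization of building sets in Proposition~\ref{prop:backman-danner} with the definition of a nested set. So assume $X,Y,Z$ are nonempty flats of $N$ with $X<Y$ and $X<Z$, and suppose toward a contradiction that $Y$ and $Z$ are incomparable in $\cL(M)$.

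First I would record the elementary fact that the meet of two flats in $\cL(M)$ is their set-theoretic intersection, which is again a flat. Since $X\leq Y$ and $X\leq Z$, this gives $X\subseteq Y\cap Z$, and because $X\neq\emptyset$ we conclude $Y\cap Z\neq\emptyset$. Now $N\subseteq B$, so $Y,Z\in B$, and the second condition of Proposition~\ref{prop:backman-danner} — that the join of two building-set elements with nonempty intersection again lies in $B$ — yields $Y\vee Z\in B$.

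On the other hand, $Y$ and $Z$ are two pairwise incomparable flats of $N$, so condition~(2) of Definition~\ref{defn:nested}, applied with $\ell=2$, says precisely that $Y\vee Z\notin B$. This contradicts the previous paragraph, so $Y$ and $Z$ must be comparable; as $X,Y,Z$ were arbitrary, $\cL(M)$ restricted to $N$ is a forest poset.

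The argument is essentially a one-line deduction once the right tools are invoked, so there is no serious obstacle. The only point worth double-checking is that the hypotheses of Proposition~\ref{prop:backman-danner} are genuinely satisfied — in particular that $Y\cap Z$ is nonempty, which is where the standing assumption that the flats of $N$ are nonempty (equivalently, that $M$ is loopless so that $\hat{0}=\emptyset$) enters, together with the correct identification of the meet $Y\wedge Z$ with the intersection $Y\cap Z$.
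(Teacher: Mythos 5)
Your argument is correct and is precisely the one the paper gives inside the observation itself: assume $Y$ and $Z$ incomparable, use $X\subseteq Y\cap Z\neq\emptyset$ together with Proposition~\ref{prop:backman-danner} to get $Y\vee Z\in B$, and contradict condition~(2) of Definition~\ref{defn:nested} with $\ell=2$. The extra care you take in identifying the meet with the intersection and noting where nonemptiness of $X$ enters is sound but adds nothing beyond the paper's reasoning.
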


We will routinely use the following corollary.
\begin{corollary}\label{cor:disjoint}
    If $\{X,Z\}$ is a reduced nested set with $X$ and $Z$ incomparable, then $X\cap Z =\emptyset$.
\end{corollary}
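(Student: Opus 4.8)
The plan is to argue by contradiction, invoking the alternate characterization of building sets in Proposition \ref{prop:backman-danner} exactly as was done in Observation \ref{obs:forest}. Suppose, toward a contradiction, that $X\cap Z\neq\emptyset$. Since $\{X,Z\}$ is reduced nested, by definition it is a subset of $B\setminus\max(B)$, so in particular $X,Z\in B$; and by hypothesis $X$ and $Z$ are incomparable. Now $X\cap Z\neq\emptyset$ together with $X,Z\in B$ forces $X\vee Z\in B$ by Proposition \ref{prop:backman-danner}.

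On the other hand, because $\{X,Z\}$ is reduced nested, the set $N:=\{X,Z\}\cup\max(B)$ is nested in the sense of Definition \ref{defn:nested}. The flats $X$ and $Z$ form a collection of $\ell=2$ pairwise incomparable elements of $N$, so condition (2) of Definition \ref{defn:nested} asserts that $X\vee Z\notin B$, contradicting the conclusion of the previous paragraph. Hence $X\cap Z=\emptyset$, as claimed. I expect no real obstacle in this argument: the only subtlety worth flagging is that "reduced nested" must be read as a subset of $B\setminus\max(B)$, which is exactly what guarantees $X,Z\in B$ so that the hypotheses of Proposition \ref{prop:backman-danner} apply; beyond that the corollary is a direct specialization of the reasoning already recorded in Observation \ref{obs:forest} to the case of a two-element nested set.
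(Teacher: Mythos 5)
Your proof is correct and matches the paper's intended argument: the paper leaves the corollary's proof implicit, but it is exactly this specialization of the reasoning in Observation \ref{obs:forest} — nonempty intersection forces $X\vee Z\in B$ by Proposition \ref{prop:backman-danner}, contradicting condition (2) of Definition \ref{defn:nested} applied to $\{X,Z\}\cup\max(B)$.
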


The \Dfn{nested set complex} of $B$, denoted $\Delta(\cL,B)$, is the simplicial complex
\[\Delta(\cL,B) \coloneqq \{ N \subseteq B \setminus\max(B): N \text{ is reduced nested}\}\,. \]

Given a flat $X\in \cL$, the \Dfn{restriction} $B\vert_X$ of $B$ to $X$ and \Dfn{contraction} $B^X$ of $B$ at $X$ are
\begin{align*}
B\vert_X & = \{Y\in B: Y\leq X\} \subseteq \cL\vert_X\\
B^X & = \{(Y\vee X)\setminus X: Y\in B, Y\not\leq X\} \subseteq \cL^X.
\end{align*}
For $X \in B \setminus \max(B)$, one has that both $B\vert_X$ and $B^X$ are building sets in their respective lattices\footnote{It's easy to see that $B\vert_X$ is a building set, even when $X$ is not in $B$.}.

\begin{proposition}[\!\!\!{{ \cite[Theorem 4.3]{deConcini-Procesi},  {\cite[Section 3]{zelevinsky},\cite[Propositions 2.8.6-7]{bibby-denham-feichtner},{\cite[Proposition A.8]{wondertopes}}, {\cite[Proposition 2.40]{Mantovani-Pardol-Pilaud}}}}}]\label{prop:rest_contract}
For $X\in B\setminus \max(B)$, both $B\vert_X \subseteq \cL\vert_X$ and $B^X\subseteq \cL^X$ are building sets.
Moreover, $B|_X$ is a building set for any $X\in \cL$.
\end{proposition}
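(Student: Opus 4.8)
The plan is to verify, for each of $B|_X$ and $B^X$, the two conditions in the Backman--Danner criterion of Proposition~\ref{prop:backman-danner}: that the set contains all connected flats of the ambient lattice, and that it is closed under joins of pairs of its members with nonempty intersection.

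\emph{The restriction.} For arbitrary $X\in\cL$ both conditions descend from $B$. The lattice $\cL|_X$ is the interval $[\hat{0},X]$ of $\cL$, with the same joins and meets, and for $Y\le X$ one has $(M|_X)|_Y=M|_Y$, so the connected flats of $M|_X$ are precisely the connected flats of $M$ that lie below $X$; these are all in $B$, hence in $B|_X$. Closure under joins is immediate: if $Y,Z\in B|_X$ have $Y\cap Z\neq\emptyset$, then $Y,Z\in B$ gives $Y\vee Z\in B$, and $Y\vee Z\le X$, so $Y\vee Z\in B|_X$. In particular this proves the ``moreover'' clause.

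\emph{The contraction: connected flats.} Use the poset isomorphism $\cL^X\cong[X,\hat{1}]$, $Y\mapsto Y\setminus X$, under which joins and meets of flats $\ge X$ correspond. The structural input is the following lemma: if $F$ is a connected flat of $M/X$ and $\bar F\in[X,\hat{1}]$ is the flat with $\bar F\setminus X=F$, then exactly one connected component $G$ of $M|_{\bar F}$ fails to be contained in $X$, and moreover $G$ is a flat of $M$ with $G\vee X=\bar F$. Granting the lemma, $G$ is a connected flat of $M$, hence $G\in B$, and $G\not\le X$ with $(G\vee X)\setminus X=F$, so $F\in B^X$. To prove the lemma: the connected components of $M|_{\bar F}$ are flats of $M$ (being flats of $M|_{\bar F}$, and $\bar F$ is a flat), $M|_{\bar F}$ is their direct sum, and contraction by $X$ distributes over this sum, reducing to the empty matroid exactly those components contained in $X$; since $(M|_{\bar F})/X=(M/X)|_F$ is connected and nonempty, precisely one component $G$ survives. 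Because every other component lies in $X$, one gets $\bar F=G\cup X$ as sets, whence $G\vee X=\bar F$.

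\emph{The contraction: closure under joins.} Let $A,C\in B^X$ share a ground-set element, and write $A=(Y_1\vee X)\setminus X$ and $C=(Y_2\vee X)\setminus X$ with $Y_i\in B$ and $Y_i\not\le X$; then $A\vee C=(Y_1\vee Y_2\vee X)\setminus X$, so it suffices to produce $W\in B$ with $W\not\le X$ and $W\vee X=Y_1\vee Y_2\vee X$. If $Y_1\cap Y_2\neq\emptyset$, take $W=Y_1\vee Y_2$: this lies in $B$ by Proposition~\ref{prop:backman-danner} and is not below $X$ since $Y_1\not\le X$. The case $Y_1\cap Y_2=\emptyset$ is, I expect, the one genuinely delicate point. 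Here one exploits that $A\cap C\neq\emptyset$ forces $(Y_1\vee X)\wedge(Y_2\vee X)\not\le X$; decomposing $A$ and $C$ into their connected components in $M/X$ and applying the lemma above produces connected flats in $B$, not below $X$, that realize the components of $A$ and $C$ through the shared element and whose join is again connected; one then reassembles $Y_1\vee Y_2\vee X$ as $W\vee X$ for a suitable $W\in B$ by tracking this component structure through the building-set factorizations at $Y_1\vee X$, $Y_2\vee X$, and their join. This last bookkeeping is the technical heart of the argument; it is classical, and the statement also follows from the references cited alongside it.
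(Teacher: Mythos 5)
The paper does not actually prove this proposition; it cites it to five references, so there is no in-paper argument to compare against. Your restriction argument and your treatment of the connected flats of the contraction are correct and complete: the component lemma is right, and the identities $(M|_{\bar F})/X=(M/X)|_{\bar F\setminus X}$ and ``components of a loopless matroid are flats'' are used correctly. The genuine gap is exactly where you flag it: closure of $B^X$ under joins when the chosen representatives satisfy $Y_1\cap Y_2=\emptyset$. What you offer there --- decomposing into connected components and ``tracking this component structure through the building-set factorizations'' --- names the difficulty without resolving it and then defers to the literature; since the point is to supply a proof rather than a citation, that case is missing.

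The gap is fillable, and more cleanly than your sketch suggests, by choosing the representatives canonically rather than arbitrarily. For $F\in B^X$ with $\bar F:=F\cup X\in\cL$, the factorization $\prod_{W\in\max(B_{\leq\bar F})}[\emptyset,W]\cong[\emptyset,\bar F]$ shows that exactly one $W\in\max(B_{\leq\bar F})$ fails to lie below $X$, and that this $W$ satisfies $W\vee X=\bar F$: if $F=(Y\vee X)\setminus X$ with $Y\in B$, $Y\not\leq X$, then $Y$ lies below a unique $W_{i_0}\in\max(B_{\leq\bar F})$ (two such maximal elements meeting in $Y\neq\emptyset$ would have their join in $B$), whence $W_{i_0}\vee X=\bar F$, and injectivity of the factorization map forces every other $W_i$ to lie below $X$. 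Now given $A,C\in B^X$ with $e\in A\cap C$, let $W_A,W_C$ be these canonical representatives. Since $X$ is a flat and $e\notin X$, the atom $\mathrm{cl}(e)$ is not below $X$, so it lies below $W_A$ and below $W_C$; hence $W_A\cap W_C\neq\emptyset$, so $W_A\vee W_C\in B$ by Proposition~\ref{prop:backman-danner}, and $(W_A\vee W_C)\vee X=\bar A\vee\bar C$, giving $A\vee C\in B^X$. With this choice of representatives your Case 2 never arises.
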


The following lemma and corollary are standard results about building sets and are well-known to experts.
We record them here to make the exposition easier later.

\begin{lemma}[\!\!\!{{\cite[Proposition 2.8.2]{FK}}}]\label{lem:incomp}
    Let $\{X_1,\ldots, X_k\} \cup \max(B)$ be a nested set of $B$ such that $\{X_1,\ldots, X_k\}$ is an antichain of flats. The set $\{X_1,\ldots,X_k\}$ is the set of inclusion-maximal elements in $B\vert_{\bigvee_{i=1}^k X_i}$. There is an isomorphism
    \[\prod_{i=1}^k[\emptyset, X_i] \simeq [\emptyset, \bigvee_{i=1}^k X_i] \]
    given by the map $(Z_1,Z_2,\ldots, Z_k) \mapsto \bigvee_{i=1}^k Z_i$.
  \end{lemma}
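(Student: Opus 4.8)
The plan is to reduce everything to the single identity $\max(B_{\leq X}) = \{X_1,\dots,X_k\}$ for $X := \bigvee_{i=1}^k X_i$ (note $X \neq \emptyset$ since $k \geq 1$). Once this is established, the claimed poset isomorphism is literally the defining property of a building set applied to the flat $X$, so no further work is needed; the entire proof is thus the computation of the maximal elements of $B_{\leq X}$.

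Write $\max(B_{\leq X}) = \{Z_1,\dots,Z_m\}$ and let $\phi\colon \prod_{l=1}^m [\emptyset,Z_l] \to [\emptyset,X]$, $(W_1,\dots,W_m)\mapsto \bigvee_l W_l$, be the building-set isomorphism for $X$. Since $[\emptyset,X]$ and the product are lattices and $\phi$ is a poset isomorphism, $\phi$ preserves meets and joins; in particular $\phi^{-1}(Z_l)$ is the tuple with $Z_l$ in slot $l$ and $\emptyset$ elsewhere, and consequently $Z_l \wedge Z_{l'} = \emptyset$ whenever $l \neq l'$. The first key step is to show that each $X_i$ is concentrated in a single slot: there is a unique index $l(i)$ with $X_i \leq Z_{l(i)}$, and $X_i \wedge Z_{l'} = \emptyset$ for all $l' \neq l(i)$. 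That some $Z_l$ lies above $X_i$ is immediate, as $X_i \in B_{\leq X}$ and this poset is finite. If moreover $X_i \wedge Z_l \neq \emptyset$, then $X_i \cap Z_l \neq \emptyset$, so $X_i \vee Z_l \in B$ by Proposition~\ref{prop:backman-danner}; since $X_i \vee Z_l \leq X$ it lies in $B_{\leq X}$ and hence below some maximal element, while it also dominates the maximal element $Z_l$ — this forces $X_i \vee Z_l = Z_l$, i.e. $X_i \leq Z_l$. Combined with $Z_l \wedge Z_{l'} = \emptyset$ (which prevents $X_i$ from lying below two distinct $Z$'s), this establishes the claim. In particular the $X_i$ turn out to be pairwise disjoint, giving an alternate route to Corollary~\ref{cor:disjoint}, though the present argument does not logically require it.

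Applying $\phi^{-1}$ to $X = \bigvee_i X_i$ and reading off slot $l$ then yields $Z_l = \bigvee_{i\,:\,l(i)=l} X_i$ for every $l$. Every slot is therefore nonempty, so the map $i \mapsto l(i)$ is surjective; and the decisive point is that it has no collisions: if $l(i)=l(i')=l$ with $i \neq i'$, then $Z_l$ is the join of at least two pairwise incomparable members of the nested set $\{X_1,\dots,X_k\}\cup\max(B)$, hence $Z_l \notin B$ by the second defining property of a nested set (Definition~\ref{defn:nested}) — contradicting $Z_l \in B$. Hence $i \mapsto l(i)$ is a bijection, $k = m$, and the slot equation collapses to $X_i = Z_{l(i)}$. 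Therefore $\{X_1,\dots,X_k\} = \max(B_{\leq X})$, and the isomorphism follows as explained. (The base case $k=1$, where $X=X_1\in B$ and $\max(B_{\leq X_1})=\{X_1\}$, is trivial and is subsumed by this argument.)

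I expect the main obstacle to be the middle paragraph: correctly tracking a flat through the product decomposition $\phi$ and pinning down the single-slot behavior of each $X_i$. Everything after that is bookkeeping plus one essential appeal to nestedness, and everything before it is just the definition of a building set.
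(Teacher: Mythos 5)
Your proof is correct. There is, however, no in-paper proof to compare it against: the paper quotes this lemma from Feichtner--Kozlov \cite[Proposition 2.8.2]{FK} and explicitly flags it as a standard fact recorded without proof. What you have written is a legitimate self-contained derivation from the paper's own definition of a building set together with Proposition \ref{prop:backman-danner} and Definition \ref{defn:nested}, and each step checks out: the maximal elements $Z_1,\dots,Z_m$ of $B_{\leq X}$ are pairwise disjoint because the building-set map $\phi$ is an order isomorphism of lattices and therefore preserves meets; the dichotomy ``$X_i\le Z_l$ or $X_i\wedge Z_l=\emptyset$'' follows from Proposition \ref{prop:backman-danner} plus maximality of $Z_l$ in $B_{\leq X}$; comparing $\phi^{-1}(X)=(Z_1,\dots,Z_m)$ with $\bigvee_i\phi^{-1}(X_i)$ slotwise gives $Z_l=\bigvee_{i\colon l(i)=l}X_i$; and condition (2) of Definition \ref{defn:nested} rules out two of the $X_i$ sharing a slot, which is exactly where the nestedness hypothesis enters. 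When writing this up, make explicit the one-line fact you use implicitly in the slotwise comparison, namely that $\phi^{-1}(X_i)$ is the tuple with $X_i$ in position $l(i)$ and $\emptyset$ elsewhere (because $\phi$ sends that tuple to $X_i$ and is injective); your aside that the argument re-derives Corollary \ref{cor:disjoint} is accurate.
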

  
\begin{corollary}\label{cor:incomp2}
    If $\{X,Z\}\subseteq B$ is nested with $X$ and $Z$ incomparable, then $(X\vee Z) \setminus Z= X$.
\end{corollary}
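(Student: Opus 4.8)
The plan is to deduce this directly from Lemma \ref{lem:incomp} applied to the two-element antichain $\{X,Z\}$. Since $\{X,Z\}$ is nested and $X,Z$ are incomparable, $\{X,Z\}$ is an antichain, so the lemma gives a poset isomorphism $[\emptyset,X]\times[\emptyset,Z]\simeq[\emptyset,X\vee Z]$ via $(Z_1,Z_2)\mapsto Z_1\vee Z_2$. The key point is to understand what the contraction operation $W\mapsto W\setminus Z$ does to a flat $W$ with $Z\le W\le X\vee Z$: under the above isomorphism such a flat corresponds to a pair $(Z_1, Z)$ with $Z_1\in[\emptyset,X]$ (the second coordinate must be all of $Z$ precisely because $W\ge Z$), and contracting by $Z$ should return exactly the ``$X$-part'', namely $Z_1$.

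Concretely, I would argue as follows. First, $X\vee Z$ is a flat with $Z\le X\vee Z$, so $(X\vee Z)\setminus Z$ is a flat of the contraction $\cL^Z$, and under the contraction correspondence flats of $\cL^Z$ containing (the image of) a given flat correspond to flats of $\cL$ above $Z$. The flats $W$ of $\cL$ with $Z\le W\le X\vee Z$ are, via Lemma \ref{lem:incomp}, exactly the joins $Z_1\vee Z$ for $Z_1\in[\emptyset,X]$; the maximal such $W$ is $X\vee Z$ itself, attained at $Z_1=X$. Passing to the contraction $\cL^Z$, the interval $[\hat 0, (X\vee Z)\setminus Z]$ is therefore isomorphic to $[\emptyset,X]$, and in particular $(X\vee Z)\setminus Z$ has the same ``height data'' (indeed the same labeled interval below it) as $X$. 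Since distinct flats of a matroid are distinguished by their underlying sets and the contraction map is injective on flats above $Z$, and since $X\setminus Z = X$ (because $X\cap Z=\emptyset$ by Corollary \ref{cor:disjoint}, as $\{X,Z\}$ is reduced nested with $X,Z$ incomparable — here one uses that $X,Z\notin\max(B)$ or simply works with $\{X,Z\}\cup\max(B)$), we conclude $(X\vee Z)\setminus Z = X$.

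The main obstacle is bookkeeping: one must be careful that $W\setminus Z$ in the definition of contraction means the set-theoretic difference of underlying flats, and check that for $W=X\vee Z$ this set difference really equals $X$ and not merely something isomorphic to $X$ in the contracted lattice. The cleanest route is to invoke Corollary \ref{cor:disjoint} to get $X\cap Z=\emptyset$, so that $X\subseteq X\vee Z$ with $X\cap Z=\emptyset$ forces $X\subseteq (X\vee Z)\setminus Z$; for the reverse inclusion one uses that $(X\vee Z)\setminus Z$ is a flat of $\cL^Z$ lying in the interval $[\hat 0,(X\vee Z)\setminus Z]\cong[\emptyset,X]$ of the same length as $[\emptyset,X]$, while $X$ (viewed in $\cL^Z$ as $X = X\setminus Z$) is a flat of $\cL^Z$ contained in $(X\vee Z)\setminus Z$; equality of the two then follows from equality of ranks. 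Thus the heart of the argument is the rank/interval comparison supplied by Lemma \ref{lem:incomp}, with Corollary \ref{cor:disjoint} handling the set-theoretic subtlety.
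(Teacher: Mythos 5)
Your overall strategy (apply Lemma \ref{lem:incomp} to the antichain $\{X,Z\}$, use Corollary \ref{cor:disjoint} for disjointness) is the right one, and your forward inclusion $X \subseteq (X\vee Z)\setminus Z$ is correct. But the reverse inclusion as you have written it is circular. You assert that ``$X$ (viewed in $\cL^Z$ as $X = X\setminus Z$) is a flat of $\cL^Z$'' and then finish by comparing ranks. By the paper's definition, the flats of $\cL^Z$ are the sets $Y\setminus Z$ for flats $Y \geq Z$ of $\cL$; since any such $Y$ with $Y\setminus Z = X$ must equal $X\cup Z$, the assertion ``$X$ is a flat of $\cL^Z$'' is equivalent to ``$X\cup Z$ is a flat of $\cL$,'' i.e.\ to $X\vee Z = X\cup Z$, which is exactly the statement being proved. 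Rank considerations cannot substitute: $X$ spans $(X\vee Z)\setminus Z$ in the contraction, so the two have equal rank and one contains the other, but equality requires knowing $X$ is closed there --- which is the claim itself.

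The missing step is a set-level identification of $X\vee Z$, and the poset isomorphism of Lemma \ref{lem:incomp} does supply it if you use it on atoms rather than on ranks: the atoms of $[\emptyset, X]\times[\emptyset,Z]$ are $(A,\emptyset)$ and $(\emptyset,A')$, so the atoms below $X\vee Z$ are precisely the atoms below $X$ together with the atoms below $Z$. Since every flat of a loopless matroid is the union of the atoms it contains, this gives $X\vee Z = X\cup Z$ as subsets of the ground set, and then $(X\vee Z)\setminus Z = X$ by disjointness. This is the paper's (two-line) argument; your phrase ``the same labeled interval below it'' gestures toward it, but the explicit argument you give in its place does not close the loop.
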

\begin{proof}
     \Cref{lem:incomp} tells us that an atom $A$ is contained in $X\vee Z$ if either $A\leq X$ or $A\leq Z$. As every flat is equal to the union of atoms it contains, the claim follows.
\end{proof}

\subsection{Bergman fans}
In this section, we recall the geometry of Bergman fans.
We provide some background on fans, but refer the reader to \cite[Section 1.2]{cox-little-schenck} for further details.

\begin{definition}
A \Dfn{fan} $\Sigma$ in $\R^n$ is a nonempty finite set of polyhedral cones, such that
\begin{enumerate}
    \item Every face of a cone $\sigma\in \Sigma$ is also a cone in $\Sigma$.
    \item The intersection of two cones $\sigma,\sigma' \in \Sigma$ is a face of $\sigma$.
\end{enumerate}
\end{definition}
The \Dfn{lineality space} of a cone $\sigma$ is the largest linear subspace contained in $\sigma$.  The definition of a fan $\Sigma$ implies that all cones in $\Sigma$ have the same lineality space $L$ and that $L$ is the unique inclusion-minimal cone of $\Sigma$. \footnote{While some authors, often those with a background in toric geometry, are inclined to quotient out fans by their lineality space, we will not do so in this article.  This choice is in fact necessary for our main Theorem \ref{thm1} to hold.}
A fan $\Sigma$ is \Dfn{pure} if all maximal cones have the same dimension. A fan $\Sigma$ is \Dfn{unimodular} if all of its cones are unimodular. In this article, all of our fans will be pure and unimodular.

\begin{definition} \label{defn:bergman_fan}
Given a matroid on ground set $E$ with lattice of flats $\cL$, let $\R^E$ denote the real vector space with basis  $\{e_i \colon i\in E\}$, equipped with the standard inner product. 
The \Dfn{Bergman fan} with respect to building set $B$, denoted $\Sigma_{\cL, B}$, is a fan in $\R^E$ with lineality space 
\[
L_B := \spn_\R \{e_X: X\in \max(B)\}
\qquad \text{where } e_{X} = \sum_{i\in X} e_i\,,
\]
and cones 
\[\sigma_N := L_B+\mathrm{cone}(e_X \colon X\in N\setminus \max(B)) \qquad \text{for each nested set } N\subseteq B.\]
\end{definition}

The Bergman fan gives a geometric realization of the nested set complex; the face poset of the Bergman fan, with the bottom element removed, is equal to the face poset of the nested set complex.
In particular, the maximal cones of $\Sigma_{\cL, B}$ correspond to the facets of $\Delta(\cL,B)$, i.e. the inclusion-maximal nested sets.

\subsection{Normal Complexes}\label{sec:normal-complexes}

Recall that every polytope defines a \emph{normal fan}, and this fan is \emph{complete} meaning its support is all of $\mathbb{R}^n$.
The Bergman fan is complete if and only if the underlying matroid is Boolean.  Thus, for any other matroid, the Bergman fan has no normal polytopes.
In this section, we recall certain polytopal complexes 
introduced by the third author and Ross called \emph{cubical normal complexes} -- as we will see, a cubical normal complex is an excellent substitute for a normal polytope for the Bergman fan.

A function $\varphi$ from a fan $\Sigma$
to $\R$ 
is  \Dfn{piecewise linear} 
 if for every cone $\sigma\in \Sigma$, the restriction $\varphi\vert_\sigma$ is  linear on $\sigma$.
 Denote the space of piecewise linear functions on a fan $\Sigma$ as $\mathrm{PL}(\Sigma)$.

The ray generators for each cone of a Bergman fan $\Sigma_{\cL, B}$ are linearly independent, hence the fan is simplicial\footnote{Here we take a slightly relaxed definition of simplicial: for our purposes a fan will be simplicial if it becomes simplicial in the traditional sense after quotienting out by the lineality space.}.  Any piecewise linear function $\varphi$ on a fan is determined by its restriction to a set of ray and lineality space generators.  Conversely, if a fan is simplicial, any function on a set of ray and lineality space generators extends to a piecewise linear function.  Thus, for a simplicial fan $\Sigma$, there is a canonical bijection between the set of functions from a fixed set of ray and lineality space generators to $\mathbb{R}$, and $\mathrm{PL}(\Sigma)$.

\begin{definition}\label{defn:normal_complexes}
For a piecewise linear function $\varphi\in \mathrm{PL}(\Sigma_{\cL, B})$ and $X \in B$, we write $\varphi_X := \varphi(e_X)$. This function defines a hyperplane and associated halfspace
\[H_{X, \varphi} = \{v\in\R^E \colon \langle v, e_X \rangle = \varphi_X \} \qquad \text{and} \qquad H^+_{X, \varphi} = \{v\in\R^E \colon \langle v, e_X \rangle \geq \varphi_X\}.\] 
We say that $\varphi$ is \Dfn{cubical} if, for every nested set $N$ of $B$, we have a single point \footnote{This is a single point by virtue of the Bergman fan being simplicial.}
\[v_N := \sigma_{N}^{\circ} \cap \bigcap_{X\in N} H_{X, \varphi} \]
where $\sigma_N^{\circ}$ is the relative interior of $\sigma_N$.  For $\varphi$ a cubical piecewise linear function, each nested set $N$ defines a polytope
\[P_{N, \varphi} := \sigma_{N} \cap \bigcap_{X\in N\setminus \max(B)} H_{X, \varphi}^+ \cap \bigcap_{Y\in \max(B)} H_{Y, \varphi}.\]
The \Dfn{normal complex} determined by $\varphi$ is the collection of these polytopes:
\[\cN_{\cL, B, \varphi} := \{ P_{N, \varphi}:{N\in \Delta(\cL, B)}\}\,.\]
\end{definition}

\begin{remark}[Why call these ``cubical'' functions?] \label{rem:cubical}
    When $\varphi$ is cubical, each $P_{N,\varphi}$ is combinatorially equivalent to an $n$-cube; see \cite[Proposition 3.8]{NR}.  
    So, when $\varphi$ is cubical, the resulting normal complex is an honest \emph{cubical complex}.
    \footnote{We note for the interested reader that normal complexes are not always CAT(0) cubical complexes.
    Take a normal complex for $U_{3,3}$ with respect to the minimal connected building set, then combinatorially this normal complex is 3 squares on the boundary of a cube which meet at a common vertex -- this is a well-known example of a cubical complex which is not CAT(0).  One should expect that normal complexes are CAT(0) if and only if the nested set complex is \emph{flag}.}).
\end{remark}

\begin{proposition}[\!\!{{\cite[Proposition 7.4]{NR}}}]\label{existnorm}
    There exists a cubical piecewise linear function on the Bergman fan of a matroid $M$ with respect to a building set $B$, thus guaranteeing the existence of a normal complex $N_{\cL, B, \varphi}$.
\end{proposition}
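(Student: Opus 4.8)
The plan is to exhibit an explicit cubical function rather than argue abstractly. Set $n := |E|$, fix an integer $M > 2n$, and let $\varphi \in \mathrm{PL}(\Sigma_{\cL,B})$ be the piecewise linear function with $\varphi_X = -M^{|X|}$ for every $X \in B$; this is well-defined because the Bergman fan is simplicial, so any prescription on the ray and lineality generators $\{e_X : X \in B\}$ extends uniquely, as recalled before \Cref{defn:normal_complexes}. First I would record the standard reduction. Fix a nested set $N$. Since $\sigma_N$ is simplicial, $\{e_X : X \in N\}$ is a basis of $W_N := \spn_\R\{e_X : X \in N\}$, and positive definiteness of the inner product makes $w \mapsto (\langle w, e_X\rangle)_{X\in N}$ an isomorphism from $W_N$ onto $\R^N$; hence there is a unique $v_N \in W_N$ with $\langle v_N, e_X\rangle = \varphi_X$ for all $X \in N$. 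Since $\sigma_N^\circ \subseteq W_N$, we have $\sigma_N^\circ \cap \bigcap_{X\in N}H_{X,\varphi} = \sigma_N^\circ \cap \{v_N\}$, so $\varphi$ is cubical precisely when $v_N \in \sigma_N^\circ$ for every $N$. Writing $v_N = \sum_{X\in N} c_X e_X$, this amounts to $c_X > 0$ for all $X \in N\setminus\max(B)$.

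The combinatorial heart is a closed formula for the partial sums $T_Y := \sum_{X \in N,\, X \ge Y} c_X$ computed along the forest poset $N$ of \Cref{obs:forest}. Using $\langle e_X, e_Y\rangle = |X \cap Y|$ and the disjointness of incomparable flats of $N$ --- which is \Cref{cor:disjoint} when neither flat lies in $\max(B)$, and follows from \Cref{prop:backman-danner} (distinct maximal flats of a building set are disjoint) otherwise --- the equation $\langle v_N, e_Y\rangle = \varphi_Y$ becomes $\varphi_Y = \sum_{X \le Y}|X|\,c_X + |Y|\sum_{X > Y}c_X$, all sums taken over $X \in N$. Writing $\mathrm{ch}_N(Y)$ for the children of $Y$ in $N$ and $\mathrm{par}_N(Y) \in N$ for its parent when $Y \notin \max(B)$, I would partition $\{X \in N : X < Y\}$ by which child $Z$ of $Y$ satisfies $X \le Z$, and then subtract the $Y$-equation from the sum over $Z \in \mathrm{ch}_N(Y)$ of the $Z$-equations; this should telescope to
\[
T_Y \;=\; \frac{\varphi_Y \;-\; \sum_{Z\in\mathrm{ch}_N(Y)}\varphi_Z}{\,|Y|\;-\;\sum_{Z\in\mathrm{ch}_N(Y)}|Z|\,}.
\]
The denominator $D_Y := |Y| - \sum_{Z\in\mathrm{ch}_N(Y)}|Z|$ is a positive integer in $\{1,\dots,n\}$: the children of $Y$ are pairwise disjoint nonempty subsets of $Y$ whose union is a \emph{proper} subset of $Y$, since with two or more children, equality would present $Y \in B$ as the join of pairwise incomparable elements of $N$, contradicting \Cref{defn:nested}; the one-child and zero-child cases are immediate.

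It then remains to estimate. Substituting $\varphi_X = -M^{|X|}$, the numerator of $-T_Y$ equals $M^{|Y|} - \sum_{Z\in\mathrm{ch}_N(Y)}M^{|Z|}$, and since $Y$ has at most $|Y|\le n$ children, each of size at most $|Y|-1$, and $M > 2n$, this numerator lies strictly between $\tfrac12 M^{|Y|}$ and $M^{|Y|}$. With $1 \le D_Y \le n$ this gives $\tfrac{1}{2n}M^{|Y|} < -T_Y \le M^{|Y|}$ for every $Y \in N$. Then for $Y \in N\setminus\max(B)$ with parent $Y' := \mathrm{par}_N(Y)$ we have $c_Y = T_Y - T_{Y'}$ and $|Y'| \ge |Y| + 1$ by proper containment of flats, so
\[
-T_{Y'} \;>\; \frac{M^{|Y'|}}{2n} \;\ge\; \frac{M}{2n}\, M^{|Y|} \;>\; M^{|Y|} \;\ge\; -T_Y ,
\]
the last strict inequality using $M > 2n$. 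Hence $c_Y = T_Y - T_{Y'} > 0$ for all $Y \in N\setminus\max(B)$, so $v_N \in \sigma_N^\circ$; as $N$ was arbitrary, $\varphi$ is cubical and \Cref{defn:normal_complexes} then produces the normal complex $\cN_{\cL,B,\varphi}$. I expect the only genuinely delicate step to be the telescoping identity for $T_Y$, which relies entirely on the forest structure of $N$ to distribute the flats below $Y$ among its children; everything after that is bookkeeping and elementary bounds. Finally, it is worth flagging that the super-polynomial growth of $M^{|X|}$ is actually being used --- a concave polynomial such as $\varphi_X = -|X|^2$ is not cubical in general (one can already engineer $c_Y = 0$ for a suitable uniform matroid, building set, and nested set) --- because it is what forces the dominance $-T_{Y'} > -T_Y$ with a threshold for $M$ depending only on $|E|$, uniformly over all building sets.
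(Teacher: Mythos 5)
Your argument is correct, and it takes a genuinely different route from the paper, which does not prove this statement itself: it cites \cite[Proposition 7.4]{NR} and, in Remark \ref{existenceCNCremark}, sketches an alternate proof that embeds $\Sigma_{\cL,B}$ into a projective Bergman fan, realizes the dual nestohedron as a removahedron of a permutahedron cut out by an exponential support function $\alpha^{|X|}$, and propagates the cubical property through facet deletions dual to Feichtner--M\"uller blow-downs. You keep the exponential weights but discard all of the polytopal machinery in favor of a direct linear-algebra computation inside a single cone, and the computation checks out: writing $S_Y=\sum_{X\in N,\,X\le Y}|X|\,c_X$, the pairwise disjointness of incomparable flats of $N$ gives $\varphi_Z=S_Z+|Z|\,T_Y$ for each child $Z$ of $Y$ and $\varphi_Y=\sum_{Z}S_Z+|Y|\,T_Y$, so subtracting yields $\bigl(|Y|-\sum_Z|Z|\bigr)T_Y=\varphi_Y-\sum_Z\varphi_Z$ exactly as you claimed; the positivity of the denominator, the bounds $\tfrac{1}{2n}M^{|Y|}<-T_Y\le M^{|Y|}$, and the conclusion $c_Y=T_Y-T_{Y'}>0$ for the parent $Y'$ all follow as you say. (One small attribution fix: when exactly one of the two incomparable flats lies in $\max(B)$, disjointness comes from parts (2)--(3) of Lemma \ref{lem:dim-nested-set}, not from Corollary \ref{cor:disjoint} or from maximality alone.) What your approach buys is a short, self-contained, effective proof --- an explicit cubical $\varphi$ with a threshold $M>2|E|$ depending only on the ground set and uniform over all building sets --- whereas the paper's sketch buys a conceptual picture (normal complexes as restrictions of honest polytopes) at the cost of invoking \cite{backman-danner}, \cite{Mantovani-Pardol-Pilaud}, \cite{pilaud2017nestohedra}, and \cite{FM}. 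The only part I would trim is the closing aside asserting that $\varphi_X=-|X|^2$ fails to be cubical for some uniform matroid; it is plausible but you do not substantiate it and it plays no role in the proof.
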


\begin{remark}\label{existenceCNCremark}
We sketch an alternate proof of the existence of normal complexes for Bergman fans.  It was proven by Backman--Danner \cite{backman-danner} and Mantovani--Pilaud--Padrol \cite{Mantovani-Pardol-Pilaud} that, given a Bergman fan $\Sigma$ associated to a matroid $M$ and a building set $B$, there exists a projective Bergman fan $\Sigma'$ such that $\Sigma \subseteq \Sigma'$  \footnote{A Bergman fan is projective if and only if the underlying matroid is a Boolean matroid.}.  Any normal complex for $\Sigma'$ restricts to a normal complex for $\Sigma$, thus it suffices to demonstrate the existence of a normal complex for $\Sigma'$.  

Let $P$ be a permutahedron which is determined by an exponential support function: for $\emptyset \neq X \subseteq E$, let $f(X) = \alpha^{|X|}$ for $\alpha \gg 1$,  then $P$ is the polytope cut out by the inequalities $\sum_{i \in X}x_i \geq f(X)$ and $\sum_{i \in E} x_i = f(E)$. Generalizing the case of graph associahedra in the work of Devadoss \cite{devadoss2009realization}, Pilaud demonstrated  \cite[Remark 25]{pilaud2017nestohedra} (see also \cite{padrol2023deformation}) that for each projective Bergman fan $\Sigma'$, there exists a nestohedron $Q$ normal to $\Sigma'$ such that $Q$ is a \emph{removahedron} for $P$, i.e. we can obtain $Q$ by deleting facets of $P$.  To complete the proof, we observe, perhaps for the first time, that such removahedra are in fact normal complexes.  First note that $P$ is a normal complex -- by symmetry, all of its vertices lie in the corresponding dual chamber of the Bergman fan.  Next, we utilize a result of Feichtner-M\"uller \cite{FM}, in the special case of the Boolean matroid, that $\Sigma'$ can be obtained from the braid arrangement by a sequence of toric blow-downs.  Dually, this implies that the process of deleting facets of $P$ to obtain $Q$ can be done one facet at a time so that all of the intermediate polytopes are nestohedra.  Thus, for completing the proof, it suffices to prove that the cubical normal complex property is preserved under the deletion of a single facet (when that deletion respects the normal fans).  Indeed, it is easy to see that the new vertex introduced by the deletion of this facet lies in the union of the chambers of the normal fan which were merged in the corresponding toric blow down.
\end{remark}

The following re-characterization will prove to be useful to us later.

\begin{lemma}\label{lem:c-is-enough}
    The normal complex $\cN_{\cL, B, \varphi}$ is determined by a single vector $c\in \R^B$.
\end{lemma}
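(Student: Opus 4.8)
The plan is to show that a cubical piecewise linear function $\varphi \in \mathrm{PL}(\Sigma_{\cL,B})$ only influences the normal complex $\cN_{\cL,B,\varphi}$ through the $|B|$-many real numbers $\varphi_X = \varphi(e_X)$ for $X \in B$. In other words, I want to unpack Definition~\ref{defn:normal_complexes} and observe that every ingredient in it --- the hyperplanes $H_{X,\varphi}$, the halfspaces $H_{X,\varphi}^+$, the vertices $v_N$, and the polytopes $P_{N,\varphi}$ --- is written purely in terms of the scalars $\varphi_X$ together with the fixed combinatorial/geometric data of $\cL$, $B$, and the cones $\sigma_N$. Setting $c := (\varphi_X)_{X \in B} \in \R^B$ then gives the claimed parametrization.

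First I would recall that the Bergman fan $\Sigma_{\cL,B}$ is simplicial (in the relaxed sense of Section~\ref{sec:normal-complexes}): the lineality generators $\{e_Y : Y \in \max(B)\}$ together with the ray generators $\{e_X : X \in B \setminus \max(B)\}$ are linearly independent within each cone. Hence, as noted in the paragraph preceding Definition~\ref{defn:normal_complexes}, a piecewise linear function on $\Sigma_{\cL,B}$ is \emph{uniquely determined} by its values on this set of ray and lineality-space generators, and conversely any assignment of values extends. Therefore the map $\varphi \mapsto (\varphi_X)_{X \in B}$ is already a bijection between $\mathrm{PL}(\Sigma_{\cL,B})$ and $\R^B$. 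The content of the lemma is then simply that the normal complex depends on $\varphi$ only through this vector, which is immediate once we check that nothing else in Definition~\ref{defn:normal_complexes} references $\varphi$ beyond the numbers $\varphi_X$.

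Concretely, I would walk through the definition: the hyperplane $H_{X,\varphi} = \{v : \langle v, e_X\rangle = \varphi_X\}$ depends only on $e_X$ (fixed) and the scalar $\varphi_X = c_X$; likewise $H^+_{X,\varphi}$. The vertex $v_N = \sigma_N^\circ \cap \bigcap_{X \in N} H_{X,\varphi}$ is then cut out by the cone $\sigma_N$ (fixed combinatorial data) and these hyperplanes, hence depends only on $c$; and being cubical --- i.e. requiring $v_N$ to be a single point in the relative interior for every nested set $N$ --- is likewise a condition on $c$ alone. Finally each polytope $P_{N,\varphi} = \sigma_N \cap \bigcap_{X \in N \setminus \max(B)} H^+_{X,\varphi} \cap \bigcap_{Y \in \max(B)} H_{Y,\varphi}$ is determined by $c$, so the whole collection $\cN_{\cL,B,\varphi} = \{P_{N,\varphi} : N \in \Delta(\cL,B)\}$ is a function of $c \in \R^B$ alone.

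There is no real obstacle here; the statement is essentially a bookkeeping consequence of the simpliciality of the Bergman fan and the explicit formulas in Definition~\ref{defn:normal_complexes}. The only mildly substantive point worth spelling out is \emph{why} a cubical $\varphi$ is determined by its values on the $e_X$ (as opposed to needing a full set of ray generators unrelated to $B$): this is exactly because the rays of $\Sigma_{\cL,B}$ are indexed by $B \setminus \max(B)$ with generators $e_X$, and the lineality space $L_B$ is spanned by the $e_Y$ with $Y \in \max(B)$, so the index set for "ray and lineality generators" is precisely $B$. I would state this observation and then conclude that the assignment $c \mapsto \cN_{\cL,B,\varphi}$ (where $\varphi$ is the unique PL function with $\varphi_X = c_X$) is well-defined, which is the assertion of the lemma.
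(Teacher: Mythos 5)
Your proposal is correct and follows the same route as the paper: the Bergman fan is simplicial, so a piecewise linear function is determined by its values on the ray and lineality-space generators, all of which have the form $e_X$ for $X \in B$, giving a vector in $\R^B$. The extra bookkeeping you include (checking that each ingredient of Definition~\ref{defn:normal_complexes} references $\varphi$ only through the scalars $\varphi_X$) is implicit in the paper's shorter argument.
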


\begin{proof}
Recall that for a simplicial fan, piecewise linear functions are determined by their values on the ray and lineality space generators.
Each of these standard generators has the form $e_X$ for $X \in B$, so $\varphi$ is uniquely determined by a vector in $\R^{B}$.
\end{proof}

\begin{note}\label{note:hxphi-hxc}
In Definition \ref{defn:normal_complexes}, we define hyperplanes $H_{X,\varphi}$ and halfspaces $H_{X,\varphi}^+$.
Each of these depends only on $X$ and the entry $c_X$ of $c$ in the sense of Lemma \ref{lem:c-is-enough}.
We will sometimes denote $H_{X,\varphi}$ and $H_{X,\varphi}^+$ by $H_{X,c}$ and $H_{X,c}^+$ when we want to emphasize that they depend only on the vector $c$.
\end{note}

Normal complexes are polyhedral complexes and thus have faces.  In this article we will utilize a different notion of faces of normal complexes which is motivated by their analogy with polytopes. 

\begin{definition}[Faces and Facets]\label{def:facets}
   Let $\cN := \cN_{\cL,B,\varphi}$ be a normal complex.
   A \Dfn{face} of $\cN$ is a nonempty intersection of $\cN$ with a collection of the hyperplanes $H_{X,\varphi}$.
   A \Dfn{facet} of $\cN$ is an intersection of $\cN$ with a single hyperplane of the form $H_{X,\varphi}$.
\end{definition}

Faces of a normal complex are not polytopes.  However, we will show in Lemma \ref{lem:intersection} that they are themselves normal complexes.

We are now ready to describe the order on the facets of the nested set complexes used in \Cref{thm1}. This ordering is inspired by the following perspective on lexicographic orders.
One way to specify a lexicographic order $<_E$ is to assign (distinct) weights to the elements of $E$.\footnote{This is a common technique in commutative algebra, where term orders in polynomial rings are often described by taking inner products of the exponent vector with a weight vector; see \cite[p.4]{sturmfels}.}
Then $a <_E b$ if and only if the weight of $a$ is less than the weight of $b$.
In that sense, we can think of the lex order induced by a weight vector on the ground set.
This interpretation of lexicographic orders inspires the following definitions.

\begin{definition}\label{def:lexicographic}
    Let $\gamma \in \R^E$ and $\cN$ be a normal complex of $(\cL, B)$. We say that $\gamma$ is \Dfn{lexicographic} on $\cN$ if, for every pair of inclusion-maximal nested sets $N$ and $N'$, $\langle v_N, \gamma \rangle > \langle v_{N'}, \gamma \rangle$ if and only if 
    there exists an index $k$ such that $(v_{N})_k > (v_{N'})_k$ and for all $1\leq i < k$, we have $(v_{N})_i = (v_{N'})_i$.
\end{definition}

\begin{definition}\label{def:normal-complex-order}
A lexicographic vector $\gamma$ on a normal complex $\cN$ gives rise to a total ordering of the inclusion-maximal nested of $(\cL,B)$ by declaring
\[N<_{\cN} N' \qquad \iff \qquad \langle v_N, \gamma \rangle < \langle v_{N'}, \gamma \rangle \]
We call this ordering the \Dfn{normal complex order} (with respect to $\cN$).
\end{definition}

\section{Illustration of Main Result}\label{sec:examples}

Let $M^{\text{br}}$ be the broom matroid, which is the matroid on the ground set $ \{0,1,2,3\}$ with flats
\[
\emptyset, 0, 1, 2, 3, 01, 02, 03, 123, 0123\,.
\]
We illustrate \Cref{thm1} for the minimal building set and the maximal building set of this matroid. We denote these building sets by $B_m$ and $B_M$, respectively. 
The building sets are
\begin{align*}
    B_{m} & = \{0,1,2,3, 123\}\,,\\
    B_{M} & = \{0,1,2,3, 01, 02, 03, 123, 0123\}\,.
\end{align*}
The lattice of flats of $M^{\text{br}}$ is shown below, with the two building sets circled ($B_m$ is on the left and $B_M$ is on the right).
\begin{center}
\begin{tikzpicture}[scale=.6,inner sep=1] 
        \node (em) at (0, 0) {$\emptyset$};
        \node[draw,circle] (0) at (-3,2) {$0$};
        \node[draw,circle] (1) at (-1,2) {$1$};
        \node[draw,circle] (2) at (1, 2) {$2$};
        \node[draw,circle] (3) at (3, 2) {$3$};
        \node (01) at (-3,4) {$01$};
        \node (02) at (-1,4) {$02$};
        \node (03) at (1, 4) {$03$};
        \node[draw,circle] (123) at (3, 4) {$123$};
        \node (e) at (0, 6) {$0123$};
          \foreach \from/\to in {em/0, em/1, em/2, em/3, 0/01, 0/02, 0/03, 1/01, 1/123, 2/02, 2/123, 3/03, 3/123, 01/e, 02/e, 03/e, 123/e}
            \draw (\from) -- (\to);
    \end{tikzpicture}
    \qquad\qquad
    \begin{tikzpicture}[scale=.6,inner sep=1] 
        \node (em) at (0, 0) {$\emptyset$};
        \node[draw,circle] (0) at (-3,2) {$0$};
        \node[draw,circle] (1) at (-1,2) {$1$};
        \node[draw,circle] (2) at (1, 2) {$2$};
        \node[draw,circle] (3) at (3, 2) {$3$};
        \node[draw,circle] (01) at (-3,4) {$01$};
        \node[draw,circle] (02) at (-1,4) {$02$};
        \node[draw,circle] (03) at (1, 4) {$03$};
        \node[draw,circle] (123) at (3, 4) {$123$};
        \node[draw,circle] (e) at (0, 6) {$0123$};
          \foreach \from/\to in {em/0, em/1, em/2, em/3, 0/01, 0/02, 0/03, 1/01, 1/123, 2/02, 2/123, 3/03, 3/123, 01/e, 02/e, 03/e, 123/e}
            \draw (\from) -- (\to);
    \end{tikzpicture}
    \end{center}

\subsection{Minimal Building Set} \label{section:minimal}

Let $M^{\text{br}}$ be the broom matroid and $B_m=\{0,1,2,3,123\}$ the minimal building set of the broom matroid.
The three inclusion-maximal nested sets are $\{0,x,123\}$ where $x = 1,2,3$.

The set $\max(B_m)$ is equal to $\{0,123\}$.
Thus the Bergman fan of $(M^{\text{br}}, B_m)$ has lineality space 
$L_{B_m} = \spn_\mathbb{R}\{e_0, e_1 + e_2 + e_3\}$
and three maximal cones
\[
    \sigma_{1} = L_{B_m} + \text{cone}(e_1)\,, \qquad
    \sigma_{2} = L_{B_m} + \text{cone}(e_2)\,, \qquad
    \sigma_{3} = L_{B_m} + \text{cone}(e_3)\,.
\]
Because we have a two dimensional lineality space, our normal complex will live inside a two dimensional affine linear subspace of $\R^4$. In this example, our normal complex will live in the two-dimensional affine space where $x_0=3$ and $x_0+x_1+x_2+x_3=0$.
In order to construct the normal complex, we take the piecewise linear function $\varphi$ whose values on each cone of the Bergman fan are 
\begin{align*}
    \varphi(x) & = 3e_0^*+e_1^*-4e_2^* \qquad \text{ for } x \in \sigma_1\,,\\
    \varphi(x) & = 3e_0^*+e_2^*-4e_1^* \qquad \text{ for } x \in \sigma_2\,,\\
    \varphi(x) & = 3e_0^*+e_3^*-4e_1^* \qquad \text{ for } x \in \sigma_3\,.
\end{align*}
In light of \Cref{lem:c-is-enough}, $\varphi$ is the piecewise linear function associated to the weight vector $c\in \R^{B_m}$ such that $c_0=3, c_{123}=0$ and $c_1=c_2=c_3=1$.
The normal complex of $B_m$ defined by $c$ is shown in the following figure.
\begin{center}
\begin{tikzpicture}[scale=.7]
\coordinate (O) at (0,0);
\coordinate (1) at (-2,-2);
\coordinate (2) at (2,-2);
\coordinate (3) at (0,2);

\node[left] at (1) {$v_1=(3,1,-2,-2)$};
\node[right] at (2) {$v_2=(3,-2,1,-2)$};
\node[right] at (3) {$v_3=(3,-2,-2,1)$};
\node[right] at (O) {\,$(3,-1,-1,-1)$};

\foreach \x in {O,1,2,3} \node[fill=black,circle,inner sep=1.5] at (\x) {};
\foreach \x in {1,2,3} \draw (O) -- (\x);
\end{tikzpicture}
\end{center}

    We now give an example of the normal complex order $<_\cN$ (\Cref{def:normal-complex-order}). As guaranteed by \Cref{thm1}, the normal complex order is a shelling order of the nested set complex.
    Let $\gamma = (1000,100,10,1)$. This vector is lexicographic on $\cN_{B_m}$ (\Cref{def:lexicographic}).
    Taking the inner product of $\gamma$ with each of the three vertices corresponding to maximal nested sets gives
    \[
    \langle \gamma, v_{1} \rangle  = 3078 \qquad
    \langle \gamma, v_{2} \rangle  = 2808 \qquad
    \langle \gamma, v_{3} \rangle  = 2781
    \]
    This induces the following order on the maximal nested sets
    \[
    \{0,1,123\} <_\cN \{0,2,123\} <_\cN \{0,3,123\}\,,
    \]
    which is a shelling order of the nested set complex (shown below).
\begin{center}
     \tdplotsetmaincoords{60}{50}
    \begin{tikzpicture}[scale=.7, tdplot_main_coords]
    \coordinate (O) at (-2,-2,-2);
    \coordinate (123) at (2,2,2);
    \coordinate (1) at (0,3,-2);
    \coordinate (3) at (-1,-1,2);
    \coordinate (2) at (0, 0, -3);

    \node[below left] at (O) {$0$};
    \node[below right] at (123) {$123$};
    \node[left] at (3) {$3$};
    \node[right] at (2) {$2$};
    \node[right] at (1) {$1$};

    \draw[draw, fill=violet!30, fill opacity=.5] (O) -- (123) -- (2) -- cycle;
    \draw[draw, fill=violet!30, fill opacity=.5] (O) -- (123) -- (3) -- cycle;
    \draw[draw, fill=violet!30, fill opacity=.5] (O) -- (123) -- (1) -- cycle;
    \end{tikzpicture}
\end{center}

\subsection{Maximal Building Set} \label{example:maximal}

Let $M^{\text{br}}$ be the broom matroid and $B_M$ the maximal building set.
The inclusion-maximal nested sets are
\begin{align*}
\{0,01,0123\},& \{0,02,0123\}, \{0,03,0123\}, \{1,01,0123\}, \{2,02,0123\}, \\
\{3,03,0123\}, & \{1,123,0123\}, \{2,123,0123\}, \{3,123,0123\}\,.
\end{align*}
The unique inclusion-maximal element of $B_M$ is $0123$, so the lineality space of the Bergman fan is
\[
L_{B_M} = \spn_\mathbb{R}(e_0 + e_1 + e_2 + e_3)\,.
\]
The fan itself has $9$ maximal cones, corresponding to the $9$ maximal nested sets listed above. 
Let $\varphi$ be the piecewise linear function associated to the weight vector $c\in \R^{B_M}$ defined by $c_X = (4-|X|)\thinspace |X|$ for all $X\in B_M$.

Below, on the left, we draw a truncated piece of the Bergman fan (where the rays are labeled by the flats they correspond to). On the right, we show the normal complex $\cN_{B_M}$ associated to $\varphi$ with some of its vertices labeled.

\begin{center}
\tdplotsetmaincoords{68}{55}
\begin{tikzpicture}[scale=1.3,tdplot_main_coords]

\coordinate (O) at (0,0,0);
\coordinate (A) at (2,0,0);
\coordinate (B) at (0,1.5,0);
\coordinate (C) at (0,0,1);
\coordinate (D) at (1,1,1);
\coordinate (E) at (0,-1.25,-1.25);
\coordinate (F) at (-1.5,0,-1.5);
\coordinate (G) at (-1,-1,0);
\coordinate (H) at (-1,-1,-1);

\draw[draw=violet!30,fill=violet!30,fill opacity=0.5] (O) -- (A) -- (D) -- cycle;
\draw[draw=violet!30,fill=violet!30,fill opacity=0.5] (O) -- (B) -- (D) -- cycle;
\draw[draw=violet!30,fill=violet!30,fill opacity=0.5] (O) -- (C) -- (D) -- cycle;
\draw[draw=violet!30,fill=violet!30,fill opacity=0.5] (O) -- (A) -- (E) -- cycle;
\draw[draw=violet!30,fill=violet!30,fill opacity=0.5] (O) -- (B) -- (F) -- cycle;
\draw[draw=violet!30,fill=violet!30,fill opacity=0.5] (O) -- (C) -- (G) -- cycle;
\draw[draw=violet!30,fill=violet!30,fill opacity=0.5] (O) -- (G) -- (H) -- cycle;
\draw[draw=violet!30,fill=violet!30,fill opacity=0.5] (O) -- (F) -- (H) -- cycle;
\draw[draw=violet!30,fill=violet!30,fill opacity=0.5] (O) -- (E) -- (H) -- cycle;

\draw[->] (O) -- (A);
\draw[->,gray] (O) -- (B); 
\draw[->] (O) -- (C);
\draw[->] (O) -- (D);
\draw[->] (O) -- (H);
\draw[->] (O) -- (G);
\draw[->] (O) -- (E); 

\node[right] at (A) {$1$};
\node[right] at (B) {$2$};
\node[above] at (C) {$3$};
\node[above right] at (D) {$123$};
\node[below] at (E) {$01$};
\node[left] at (F) {$02$};
\node[left] at (G) {$03$};
\node[below left] at (H) {$0$};

\end{tikzpicture}
\qquad 
\begin{tikzpicture}[scale=0.9,tdplot_main_coords]

\coordinate (O) at (0,0,0);
\coordinate (A) at (0,0,1.6);
\coordinate (B) at (1,1,1.6);
\coordinate (C) at (1.2,1.2,1.2);
\coordinate (D) at (0,1.6,0);
\coordinate (E) at (1,1.6,1);
\coordinate (F) at (1.6,0,0);
\coordinate (G) at (1.6,1,1);
\coordinate (H) at (-1.6,-1.6,1.6);
\coordinate (I) at (-1.6,-1.6,0);
\coordinate (J) at (-1.6,1.6,-1.6);
\coordinate (K) at (-1.6,0,-1.6);
\coordinate (L) at (1.6,-1.6,-1.6);
\coordinate (M) at (0,-1.6,-1.6);
\coordinate (N) at (-1.6,-1.6,-0.4);
\coordinate (P) at (-1.2,-1.2,-1.2);
\coordinate (Q) at (-1.6,-0.4,-1.6);
\coordinate (R) at (-0.4,-1.6,-1.6);

\draw[draw=teal!30, fill=teal!30, fill opacity=.5] (O) -- (A) -- (B) -- (C) -- cycle;
\draw[draw=teal!30, fill=teal!30, fill opacity=.5] (O) -- (D) -- (E) -- (C) -- cycle;
\draw[draw=teal!30, fill=teal!30, fill opacity=.5] (O) -- (F) -- (G) -- (C) -- cycle;
\draw[draw=teal!30, fill=teal!30, fill opacity=.5] (O) -- (A) -- (H) -- (I) -- cycle;
\draw[draw=teal!30, fill=teal!30, fill opacity=.5] (O) -- (D) -- (J) -- (K) -- cycle;
\draw[draw=teal!30, fill=teal!30, fill opacity=.5] (O) -- (F) -- (L) -- (M) -- cycle;
\draw[draw=teal!30, fill=teal!30, fill opacity=.5] (O) -- (I) -- (N) -- (P) -- cycle;
\draw[draw=teal!30, fill=teal!30, fill opacity=.5] (O) -- (K) -- (Q) -- (P) -- cycle;
\draw[draw=teal!30, fill=teal!30, fill opacity=.5] (O) -- (M) -- (R) -- (P) -- cycle;

\draw[thick,teal!70!gray] (O) -- (C);
\draw[thick,teal!70!gray] (O) -- (A);
\draw[dashed,teal!70!gray] (O) -- (D);
\draw[thick,teal!70!gray] (O) -- (F);
\draw[thick,teal!70!gray] (O) -- (M);
\draw[dashed,teal!70!gray] (O) -- (K);
\draw[thick,teal!70!gray] (O) -- (I);
\draw[thick,teal!70!gray] (O) -- (P);

\node[above] at (B) {$v_{\{3,123\}}=(-3,0,0,3) $};
\node[fill=black,inner sep=1.5,circle] at (B) {};
\node[fill=black,inner sep=1.5,circle] at (L) {};
\node[below right] at (L) {$v_{\{1,01\}}=(1,3,-2,-2)$};
\node[fill=black,inner sep=1.5,circle] at (R) {};
\node[left] at (R) {$v_{\{0,01\}}=(3,1,-2,-2)$};

\end{tikzpicture}

\end{center}
Let $\gamma = (1000,100,10,1)$. This vector is lexicographic on $\cN_{B_M}$ (\Cref{def:lexicographic}).
Taking the inner product of $\gamma$ with each of the nine vertices corresponding to the inclusion-maximal nested sets gives
\begin{align*}
\langle \gamma, v_{\{0,01,0123\}} \rangle &  = 3078  &
\langle \gamma, v_{\{0,02,0123\}} \rangle &  = 2808 \\ 
\langle \gamma, v_{\{0,03,0123\}} \rangle &  = 2781  & 
\langle \gamma, v_{\{1,01,0123\}} \rangle &  = 1278 \\ 
\langle \gamma, v_{\{2,02,0123\}} \rangle &  = 828  &
\langle \gamma, v_{\{3,03,0123\}} \rangle &  = 783 \\ 
\langle \gamma, v_{\{1,123,0123\}} \rangle &  = -2700 & 
\langle \gamma, v_{\{2,123,0123\}} \rangle &  = -2970 \\ 
\langle \gamma, v_{\{3,123,0123\}} \rangle &  = -2997 
\end{align*}
This induces the following order on the maximal nested sets
\begin{align*}
\{0,01,0123\} <_\cN & \{0,02,0123\} <_\cN \{0,03,0123\} <_\cN \{1,01,0123\} <_\cN \{2,02,0123\}\\
<_\cN & \{3,03,0123\} <_\cN \{1,123,0123\} <_\cN \{2,123,0123\} <_\cN \{3,123,0123\}\,,
\end{align*}
and we can check that this is a shelling order of the nested set complex.

The lexicographic (\Cref{def:lexicographic}) condition for $\gamma$ is necessary. If $\gamma'= (1,100,101,-1000)$ (this vector is not lexicographic), then the first two maximal nested sets in the induced order are $\{2,02,0123\} < \{1,01,0123\}$. We can already see that the induced order fails to be a shelling order because the first two facets of the nested set complex don't intersect!

Since $B_M$ is the maximal building set, its nested set complex is a cone over the order complex of the proper part of the lattice of flats.
Famously, Björner\cite{bjorner} gives a shelling order $\prec_{EL}$ (\Cref{def:el}) of the order complex of the lattice of flats with respect to any linear order on the ground set. 
Taking $0 \prec 1 \prec 2 \prec 3$ as our linear order of our ground set, we compute
\begin{align*}
\{0,01,0123\} \prec_{EL} & \{0,02,0123\} \prec_{EL} \{0,03,0123\} \prec_{EL} \{1,01,0123\} \prec_{EL} \{1,123,0123\} \\
\prec_{EL} & \{2,02,0123\}  \prec_{EL} \{2,123,0123\} \prec_{EL} \{3,03,0123\} \prec_{EL} \{3,123,0123\}\,.
\end{align*}
This is a different order than our shelling order coming from the normal complex, even though they have the same first element. 

\section{Combinatorics of Nested Set Complexes: Links and Joins}\label{sec:links}

Here we collect a series of technical combinatorial results about nested set complexes. 
These will be used in \Cref{subsec:geometry} to prove \Cref{thm1}.
First we start with some results that use poset theory to look at the topology of nested set complexes.
Then we construct a linear order on nested sets and prove some properties of this (combinatorial) order.

\begin{lemma}\label{lem:dim-nested-set}\label{lem:max-partition}
Let $M$ be a (loopless) matroid, $\cL$ its lattice of flats, and $B\subseteq \cL$ a building set of $\cL$ with containment-maximal elements $\max(B)$.
Then 
\begin{enumerate}
    \item Every inclusion-maximal nested set has cardinality equal to $\rank(M)$. The nested set complex $\Delta(\cL,B)$ is pure of dimension $\rank(M)- |\max(B)|$.
    \item The set $\max(B)$ partitions the ground set of $M$.
    \item Each flat $X\in B$ is contained in a unique element $F\in \max(B)$.
\end{enumerate}
\end{lemma}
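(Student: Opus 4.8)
The plan is to prove (2) and (3) directly from the alternate characterization of building sets (\Cref{prop:backman-danner}), and to prove (1) by induction on $\rank(M)$. For (3): a flat $X\in B$ is contained in some $F\in\max(B)$ because $B$ is finite, and if it were contained in two distinct $F,F'\in\max(B)$ then $\emptyset\neq X\subseteq F\cap F'$ would force $F\vee F'\in B$, contradicting the maximality of $F$ and $F'$. For (2): each $i\in E$ lies in the rank-one flat $\mathrm{cl}(\{i\})$, which is connected and hence in $B$, so by (3) it lies below a unique element of $\max(B)$; thus $\bigcup\max(B)=E$, while distinct elements of $\max(B)$ are disjoint, since otherwise their join would again lie in $B$ by \Cref{prop:backman-danner}.

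For (1), fix a maximal nested set $\tilde N=N\cup\max(B)$ with $N=\tilde N\setminus\max(B)$. If $N=\emptyset$ then $\tilde N=\max(B)$, which forces $B=\max(B)$ (otherwise some $Y\in B\setminus\max(B)$ could be adjoined to $\tilde N$); the defining isomorphism of a building set at the top flat $\hat 1$ then reads $\cL\cong\prod_{F\in\max(B)}[\emptyset,F]$ with each $[\emptyset,F]=\{\hat 0,F\}$ a two-element chain, since $B\vert_F=\{F\}$ is a building set of $\cL\vert_F$ and so contains every atom of $M\vert_F$. Hence $\rank(M)=|\max(B)|=|\tilde N|$. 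If instead $N\neq\emptyset$, choose a $\subseteq$-minimal element $X$ of $N$; since no element of $\max(B)$ can sit strictly below a flat of $B$, $X$ is $\subseteq$-minimal in all of $\tilde N$.

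The first key step is that $X$ must be an atom. If $\rank(X)\geq 2$, pick an atom $A\subsetneq X$ (it lies in $B$, being connected); then $A\notin\tilde N$ by minimality of $X$, and I claim $\tilde N\cup\{A\}$ is nested, contradicting maximality of $\tilde N$. Any antichain of at least two flats in $\tilde N\cup\{A\}$ whose join lies in $B$ must involve $A$, for otherwise it already violates nestedness of $\tilde N$; write it as $A,V_1,\dots,V_j$ with $j\geq 1$ and $A\vee V_1\vee\cdots\vee V_j\in B$. Each $V_i$ is incomparable to $X$: it cannot lie strictly below $X$ by minimality, and it cannot contain $X$ since then it would contain $A$. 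As $A\subseteq X$, the flats $X$ and $A\vee V_1\vee\cdots\vee V_j$ meet in $A\neq\emptyset$, so \Cref{prop:backman-danner} gives $X\vee V_1\vee\cdots\vee V_j\in B$; but then $\{X,V_1,\dots,V_j\}$ is an antichain in $\tilde N$ with join in $B$, contradicting nestedness. So $X$ is an atom, and $X\in B\setminus\max(B)$.

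It then remains to run the induction by contracting at $X$. By \Cref{prop:rest_contract}, $B^X$ is a building set for the loopless matroid $M/X$, which has rank $\rank(M)-1$, and $|\max(B^X)|=|\max(B)|$ (the block containing $X$ loses $X$, while by \Cref{cor:incomp2} the other blocks are unchanged). I would use the local decomposition $\mathrm{link}_{\Delta(\cL,B)}(X)\cong\Delta(\cL^X,B^X)$, valid because $X$ is an atom (so the restriction factor $\Delta(\cL\vert_X,B\vert_X)$ is trivial); under it the facet $N\setminus\{X\}$ of $\mathrm{link}_{\Delta(\cL,B)}(X)$ maps to a facet of $\Delta(\cL^X,B^X)$, which by the inductive hypothesis has $\rank(M)-1-|\max(B)|$ elements, so $|N|=\rank(M)-|\max(B)|$ and $|\tilde N|=\rank(M)$. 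The purity and dimension statement is then immediate, the facets of $\Delta(\cL,B)$ being exactly the sets $\tilde N\setminus\max(B)$. The step I expect to require the most care is establishing this link decomposition; I would prove it directly, the one genuine subtlety being injectivity of the contraction map $W\mapsto(W\vee X)\setminus X$ on $\tilde N\setminus\{X\}$: the forest structure rules out collisions between comparable $W_1,W_2$, and if $W_1\vee X=W_2\vee X$ for incomparable $W_1,W_2\in\tilde N$ then $W_1\cap W_2=\emptyset$ by \Cref{cor:disjoint}, a short rank count forces $W_1\vee W_2$ to equal this common flat, and one checks that flat is connected --- hence in $B$ --- contradicting nestedness of $\tilde N$. (Alternatively, purity of $\Delta(\cL,B)$ can simply be cited from \cite{deConcini-Procesi} or \cite{FK}.)
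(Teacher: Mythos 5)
Your proofs of parts (2) and (3) coincide with the paper's: both are read off directly from Proposition \ref{prop:backman-danner} together with the fact that the rank-one flats (which are connected, hence in $B$) cover the ground set. The real divergence is in part (1), where the paper simply cites \cite[Corollary 4.3]{FM}, whereas you give a self-contained induction on $\rank(M)$: you show a minimal element $X$ of a maximal nested set must be an atom (via a clean antichain argument using Proposition \ref{prop:backman-danner}), then contract at $X$ and descend through the link isomorphism. Your argument is correct, and the atom step in particular is a nice piece of reasoning that the paper never makes explicit. Two caveats keep it from being fully self-contained. First, the link decomposition you invoke is Theorem \ref{thrm:wondertopes}, which the paper itself only cites from \cite{wondertopes}; so as written you have traded one external citation for another. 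Second, in your sketched direct proof of injectivity of $W\mapsto (W\vee X)\setminus X$, the step ``one checks that flat is connected'' is the shakiest link --- connectivity of $W_1\vee W_2$ is not obvious and is not actually needed: if $W_1,W_2$ are incomparable elements of $\tilde N$ with $W_1\vee X=W_2\vee X$, then Lemma \ref{lem:incomp} gives $[\emptyset,W_1]\times[\emptyset,W_2]\cong[\emptyset,W_1\vee W_2]$, and your own rank count ($\rank(W_1\vee W_2)=\rank(W_1)+1<\rank(W_1)+\rank(W_2)$ unless both are atoms, in which case the interval $[\emptyset,W_1\vee W_2]$ contains the extra atom $X$) already contradicts that isomorphism. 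With that repair, or by simply taking the citation route you offer in your final parenthetical, the argument is complete; the payoff of your version is a transparent, elementary proof of purity that the paper outsources.
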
 

\begin{proof}
The first part follows from \cite[Corollary 4.3]{FM} after accounting for the fact that our definition of a nested set complex excludes $\max(B)$ from its vertex set.
The second part follows from Proposition \ref{prop:backman-danner} and the fact that the atoms of $M$ (which are contained in the set of join-irreducibles) partition the ground set.
We prove the last statement by contradiction.
Let $H_1,H_2\in \max(B)$, and
suppose $F \subseteq H_1, H_2$.
Then $H_1 \cap H_2 \neq \emptyset$, hence $H_1 \vee H_2 \in B$, a contradiction to the maximality of $H_1$ and $H_2$ via Proposition \ref{prop:backman-danner}.
\end{proof}

We now turn to understanding the link of a vertex in the nested set complex and joins of (very special) nested set complexes.
These results imbue nested set complexes with a recursive structure: we will be able to express the links of vertices in nested set complexes as a join of two smaller nested set complexes.

A surprising outcome is that this recursive structure is reflected in the geometry of the normal complex; see Lemma \ref{lem:intersection}.
This gives the normal complex itself a recursive structure mimicking the recursive structure of the combinatorial picture.
This recursive structure will be a key tool for the proof of Theorem \ref{thm1}.

Recall the definitions of $\cL\vert_X, \cL^X, B|_X$, and $B^X$ from earlier.
We will be interested in understanding the set $(B\vert_X,\emptyset)\cup (\emptyset, B^X)$ inside $\cL\vert_X \times \cL^X$.
To make the following exposition somewhat easier to parse, note that if $(Y,\emptyset)\in (B\vert_X,\emptyset)$, then $Y\subseteq X$. Similarly, if $(\emptyset,Z)\in (\emptyset, B^X)$, then $Z\subseteq E\setminus X$.
When describing elements of $(B\vert_X,\emptyset)\cup (\emptyset, B^X)$, we will often write $Y$ and $Z$ instead of $(Y,\emptyset)$ and $(\emptyset, Z)$, respectively.
Similarly, we will refer to the sets $(B\vert_X,\emptyset)$ and $(\emptyset, B^X)$ as $B\vert_X$ and $B^X$.
For simplicity, we will write $(B\vert_X \cup B^X)$ to mean $(B\vert_X,\emptyset)\cup (\emptyset, B^X)$.

\begin{lemma}\label{lem:building-set-union}
    The collection $B|_X \cup B^X$ is a building set in $\cL|_X \times \cL^X$.
\end{lemma}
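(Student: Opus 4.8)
The plan is to verify the building set criterion of Proposition~\ref{prop:backman-danner} for $B|_X \cup B^X$ inside the product lattice $\cL|_X \times \cL^X$. Recall that in a product lattice $\cL_1 \times \cL_2$, the connected flats are exactly those of the form $(C, \hat 0)$ with $C$ connected in $\cL_1$, or $(\hat 0, C)$ with $C$ connected in $\cL_2$; meets and joins are computed coordinatewise. So the proof breaks into two checks: first, that $B|_X \cup B^X$ contains all connected flats of the product; second, that it is closed under joins of pairs of its members with nonempty intersection.

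For the first check, I would use Proposition~\ref{prop:rest_contract}, which tells us $B|_X \subseteq \cL|_X$ and $B^X \subseteq \cL^X$ are themselves building sets, hence each contains the connected flats of its respective lattice. By the description of connected flats in a product lattice above, every connected flat of $\cL|_X \times \cL^X$ is $(C, \emptyset)$ or $(\emptyset, C)$ with $C$ connected in the appropriate factor, and such a flat therefore lies in $(B|_X, \emptyset)$ or $(\emptyset, B^X)$. That settles containment of the connected flats.

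For the closure-under-joins condition, let $P, Q \in B|_X \cup B^X$ with $P \cap Q \neq \emptyset$ (meet nonempty in the product). Since meets in the product are coordinatewise, a nonempty meet forces $P$ and $Q$ to ``live in the same coordinate'': either both are in $(B|_X, \emptyset)$ or both are in $(\emptyset, B^X)$ (a flat supported in the first coordinate and one supported in the second always meet in $(\emptyset,\emptyset)$). Suppose both lie in $B|_X$, say $P = (Y_1, \emptyset)$, $Q = (Y_2, \emptyset)$ with $Y_1 \cap Y_2 \neq \emptyset$ in $\cL|_X$. Because $B|_X$ is a building set, $Y_1 \vee Y_2 \in B|_X$, and the join in the product is $(Y_1 \vee Y_2, \emptyset) \in (B|_X, \emptyset)$, as needed. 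The case where both lie in $B^X$ is identical, using that $B^X$ is a building set. This completes the verification.

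The main obstacle — really the only substantive point — is being careful about how flats, meets, and joins in the product lattice $\cL|_X \times \cL^X$ decompose, and in particular making rigorous the claim that a flat supported in the first factor and a flat supported in the second factor have empty intersection, so that a nonempty meet confines both flats to a single factor. Once that product-lattice bookkeeping is pinned down, both halves of the Proposition~\ref{prop:backman-danner} criterion reduce immediately to the fact (Proposition~\ref{prop:rest_contract}) that $B|_X$ and $B^X$ are already known to be building sets. I would also remark that this lemma is the combinatorial shadow of the geometric decomposition of a facet of a normal complex, foreshadowing Lemma~\ref{lem:intersection}.
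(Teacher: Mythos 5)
Your proof is correct and follows essentially the same route as the paper: both verify the two conditions of Proposition~\ref{prop:backman-danner}, using that connected flats of $\cL|_X \times \cL^X$ (the lattice of flats of $M|_X \oplus M^X$) are supported in a single factor, and that a nonempty meet forces two flats into the same factor, where the known building-set property of $B|_X$ or $B^X$ finishes the argument. No gaps.
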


\begin{proof}
Note that $\cL|_X$ and $\cL^X$ are both geometric lattices; see \cite[Sections 1.3 and 3.1]{oxley}.
In particular, their product is also a geometric lattice and it makes sense
to talk about building sets in $\cL|_X \times \cL^X$; see \cite[Fact 4.2.16]{oxley}.

It is well known that $B|_X$ is a building set in $\cL|_X$ and $B^X$ is a building set in $\cL^X$; see \cite[Lemma A.8]{wondertopes} or \cite[Proposition 2.40]{Mantovani-Pardol-Pilaud} for example.
We just need to check that their disjoint union is a building set for the product of posets $\cL|_X \times \cL^X$.
Following Proposition \ref{prop:backman-danner}, there are two things to check: first that the connected flats of $\cL|_X \times \cL^X$ are in $(B|_X, \emptyset) \cup (\emptyset, B^X)$ and second that whenever $(Y,Z) \wedge (V,W) \not= \emptyset$, their join is also in $(B|_X, \emptyset) \cup (\emptyset, B^X)$.

For the first statement, note that $\cL|_X \times \cL^X$ is the lattice of flats of $M|_X \oplus M^X$.
In particular, the connected flats have the form $(Y,\emptyset)$ or $(\emptyset,Z)$ and must already be in $(B|_X, \emptyset) \cup (\emptyset, B^X)$.
For the second statement, note that the only way for $(V,W), (Y,Z)\in (B|_X, \emptyset) \cup (\emptyset, B^X)$ to have $(V,W) \wedge (Y,Z)$ nonempty is for both of $(V,W), (Y,Z)$ to be in $(B|_X, \emptyset)$ or both to be in $(\emptyset, B^X)$.
In the first case, for example, we'd have $(V,\emptyset), (Y,\emptyset)$ with $V \wedge Y$ nonempty in $\cL|_X$.
Since $B|_X$ is a building set, $V\wedge Y \in B|_X$ (by Proposition \ref{prop:backman-danner}) and so $(V,\emptyset) \wedge (Y,\emptyset) \in (B|_X, \emptyset)$ too.
A similar argument holds when $(V,W), (Y,Z) \in (\emptyset, B^X)$, implying that their disjoint union is a building set.
\end{proof}

An interesting consequence of the preceding lemma is that the nested set complex $\Delta(\cL|_X \times \cL^X,B|_X\cup B^X)$ is isomorphic to the join of $\Delta(\cL|_X,B|_X)$ and $\Delta(\cL^X, B^X)$.
That is
\[\Delta(\cL|_X,B|_X) \ast \Delta(\cL^X, B^X) \cong \Delta(\cL|_X \times \cL^X,B|_X \cup B^X)\,.\]
In particular, there is a natural bijection between their vertex sets given by the map $(N,N') \mapsto N \cup N'$ (where $N$ sits inside the product poset by taking $(X,\emptyset)$ for each $X$ in $N$ and $N'$ sits inside the product by taking $(\emptyset,X)$ for each $X \in N'$).
It's easy to see that $ N \cup N'$ is nested whenever $N$ and $N'$ are, since $N \subseteq B|_X$ and $N' \subseteq B^X$ are on disjoint ground sets. 
In Lemma \ref{lem:intersection}, we will see the facets of a normal complex of $(\cL,B)$ are precisely the normal complexes of these joins.

We now recall another isomorphism of simplicial complexes, this time between the link of a flat and the product of the nested set complexes of the restriction and contraction at that flat.
This bijection was first given for Boolean matroids in \cite{zelevinsky} and then extended to all matroids in \cite[Theorem 1.7]{wondertopes}.
This map is first defined between the sets of vertices of the two complexes, then it can be shown that this map between vertex sets induces an isomorphism of simplicial complexes.

Let $Z \in B \setminus \max(B)$ be a flat of the building set which is not maximal and let ${\sf LinkVert}(Z)$ denote the set of vertices in the link of $Z$ , i.e.,
\begin{align*}
    {\sf LinkVert}(Z) & = 
     \{X\in B\setminus (\{Z\}\cup\max(B)): \{X,Z\} \text{ is reduced nested}\}.
\end{align*}
Now the map $\tau_Z$ is
\begin{align*}
    \tau_Z: {\sf LinkVert}(Z) & \to (B\vert_Z \cup B^Z) \setminus \max(B\vert_Z \cup B^Z)\\
    X & \mapsto \begin{cases}
X & \text{if } X < Z\\
(X \vee Z) \setminus Z & \text{else.}
\end{cases}
\end{align*}

What makes $\tau_Z$ particularly useful is the following theorem, which is inspired (both in statement and proof style) by work of Zelevinsky for the Boolean lattice \cite{zelevinsky}.

\begin{theorem}[\!\!{{\cite[Theorem 1.7]{wondertopes}}}]\label{thrm:wondertopes}
    The map $\tau_Z$ is a bijection and induces an isomorphism between the simplicial complexes ${\sf Link}(\Delta(\cL, B);Z)$ and $\Delta(\cL\vert_Z \times \cL^Z, B\vert_Z \cup B^Z)$.
\end{theorem}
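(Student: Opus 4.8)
The plan is to adapt Zelevinsky's argument for the Boolean lattice (as carried out in full generality in \cite[Theorem 1.7]{wondertopes}): one reduces the theorem to (i) verifying that $\tau_Z$ is a well-defined bijection between the two vertex sets, and (ii) showing that $\tau_Z$ and its inverse carry faces to faces. For (ii) I would use the identification, from \Cref{lem:building-set-union} and the discussion after it, of the target with a join, $\Delta(\cL\vert_Z \times \cL^Z, B\vert_Z\cup B^Z) \cong \Delta(\cL\vert_Z, B\vert_Z) \ast \Delta(\cL^Z, B^Z)$, so that a face of the target is a disjoint union $N_1 \sqcup N_2$ with $N_1$ reduced nested in $B\vert_Z$ and $N_2$ reduced nested in $B^Z$; here the nested condition need only be tested separately on the two parts, since a join mixing an element of $B\vert_Z$ with one of $B^Z$ is never in the product building set. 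On the source side, a face of ${\sf Link}(\Delta(\cL,B);Z)$ is a reduced nested $N$ with $Z\notin N$ and $N\cup\{Z\}$ reduced nested.

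For part (i), well-definedness of $\tau_Z$ is a case check on how $X\in{\sf LinkVert}(Z)$ sits relative to $Z$. If $X<Z$ then $\tau_Z(X)=X$ is a non-maximal element of $B\vert_Z$ (whose unique maximal element is $Z$). If $X$ is incomparable to $Z$, then \Cref{cor:disjoint} gives $X\cap Z=\emptyset$, \Cref{cor:incomp2} gives $\tau_Z(X)=(X\vee Z)\setminus Z=X$, and $X\vee Z\notin B$ by the nested condition; if $X>Z$ then $\tau_Z(X)=X\setminus Z$. In the latter two cases $\tau_Z(X)\in B^Z$, and using \Cref{lem:dim-nested-set}(3) to place $Z$ inside its unique $F_j\in\max(B)$ one identifies $\max(B^Z)$ with $\{F_i:i\neq j\}\cup\{F_j\setminus Z\}$ and checks (using $X\notin\max(B)$, and $X\vee Z\notin B$ in the incomparable case) that $\tau_Z(X)$ is never one of these. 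For the inverse I would take $\sigma(W)=W$ for $W\in B\vert_Z\setminus\{Z\}$, and for $W\in B^Z\setminus\max(B^Z)$, writing $W=(Y\vee Z)\setminus Z$ with $Y\in B$, set $\sigma(W)=W$ if $W\in B$ and $\{W,Z\}$ is reduced nested, and $\sigma(W)=\overline{W\cup Z}$ otherwise. This rests on the dichotomy: either $Y\vee Z\notin B$, in which case $Y$ is incomparable to $Z$ and $W=Y\in{\sf LinkVert}(Z)$ by \Cref{cor:incomp2}; or $Y\vee Z\in B$, in which case $\overline{W\cup Z}=Y\vee Z$ sits strictly between $Z$ and $F_j$, so $\{\overline{W\cup Z},Z\}$ is reduced nested. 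A direct computation, again leaning on \Cref{cor:incomp2}, then shows $\tau_Z$ and $\sigma$ are mutually inverse.

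For part (ii), take a face $N$ of the link and split $N=N^-\sqcup N^+$ with $N^-=\{X\in N:X<Z\}$ and $N^+=N\setminus N^-$; then $\tau_Z(N^-)=N^-\subseteq B\vert_Z$ and $\tau_Z(N^+)\subseteq B^Z$, and I must show that $N^-$ is reduced nested in $B\vert_Z$ and $\tau_Z(N^+)$ reduced nested in $B^Z$, and conversely. The assertion for $N^-$ is immediate: pairwise incomparable flats inside $B\vert_Z = B\cap[\hat 0,Z]$ are pairwise incomparable in $B$ with the same join, and that join lies in $B$ iff it lies in $B\vert_Z$, so nestedness passes to and from the restriction. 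For $N^+$ the engine is the lattice isomorphism $[Z,\hat 1]\cong\cL^Z$, $Y\mapsto Y\setminus Z$: a join of a family $W_1,\dots,W_\ell$ drawn from $\tau_Z(N^+)\cup\max(B^Z)$ equals $\big((\textstyle\bigvee_i X_i)\vee Z\big)\setminus Z$, where $X_i$ is the associated building-set flat (namely $\tau_Z^{-1}(W_i)$, or the relevant $F\in\max(B)$), using the identity $\overline{W_i\cup Z}=X_i\vee Z$ in every case. Since $X\mapsto(X\vee Z)\setminus Z$ is order-preserving, incomparability of the $W_i$ in $\cL^Z$ forces incomparability of the $X_i$ in $\cL$; hence $\{X_i\}$ is a pairwise incomparable sub-family of the nested set $N\cup\{Z\}\cup\max(B)$, so $\bigvee_i X_i\notin B$. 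It then remains to deduce that $\big((\bigvee_i X_i)\vee Z\big)\setminus Z\notin B^Z$, i.e.\ that there is no $Y\in B$ with $Y\vee Z=(\bigvee_i X_i)\vee Z$, and to run this whole correspondence backwards for the converse.

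I expect that last deduction to be the main obstacle. One must analyze the building-set factorization $[\emptyset,(\bigvee_i X_i)\vee Z]\cong\prod_G[\emptyset,G]$ over its containment-maximal building-set flats (\Cref{lem:incomp}), locate $Z$ and each $X_i$ among the factors, and argue via \Cref{prop:backman-danner} that the only way some $Y\in B$ could satisfy $Y\vee Z=(\bigvee_i X_i)\vee Z$ would force $\bigvee_i X_i\in B$, contradicting the previous step; the reverse implication is the same bookkeeping in the other direction. Everything else is routine manipulation with \Cref{cor:disjoint,cor:incomp2}. Since this statement is precisely \cite[Theorem 1.7]{wondertopes}, modeled there on Zelevinsky's treatment of the Boolean case \cite{zelevinsky}, a complete write-up would essentially reproduce that proof.
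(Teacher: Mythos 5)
The paper does not actually prove this statement: it is imported wholesale from \cite[Theorem 1.7]{wondertopes} (itself modeled on Zelevinsky's treatment of the Boolean case \cite{zelevinsky}), so there is no internal argument to compare yours against. Your outline is a faithful reconstruction of the architecture of that proof --- a vertex bijection established by a case analysis on the position of $X$ relative to $Z$, followed by a faces-to-faces check that exploits the join decomposition $\Delta(\cL\vert_Z,B\vert_Z)\ast\Delta(\cL^Z,B^Z)$ of the target and the lattice isomorphism between $\cL^Z$ and the interval above $Z$ --- and you have correctly located where the real work sits. As written, however, this is a plan rather than a proof: the step you yourself flag as ``the main obstacle'' (that no $Y\in B$ can satisfy $Y\vee Z=\bigl(\bigvee_i X_i\bigr)\vee Z$ when $\bigvee_i X_i\notin B$, and its converse) is exactly the content of the theorem, and it is left undone.

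That deduction does go through along the lines you gesture at, so the gap is one of execution rather than of strategy. Set $V=\bigl(\bigvee_i X_i\bigr)\vee Z$. By \Cref{obs:forest} at most one $X_i$ can lie above $Z$, so either $\{X_1,\dots,X_\ell,Z\}$ or $\{X_1,\dots,X_\ell\}$ is a nested antichain with join $V$; \Cref{lem:incomp} then identifies that antichain with the containment-maximal elements of $B_{\leq V}$ and factors $[\emptyset,V]$ as the product of the intervals below them. Any $Y\in B$ with $Y\leq V$ must therefore lie below a single factor, and in each case one checks (using $\ell\geq 2$ and $Y\not\leq Z$) that $Y\vee Z$ is then strictly below $V$, so no such witness exists; the converse is the same localization run in reverse. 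To turn your proposal into a proof you would need to write out this factorization argument in both directions, together with the routine verifications for your inverse map $\sigma$ --- in particular that it is independent of the chosen representative $Y$ of $W\in B^Z$, which your formulation in terms of $W$ alone does handle, and that $\sigma(W)\notin\max(B)$ in the second branch of your dichotomy.
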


\begin{note} \label{note:maximalTau}
The maximal elements of $\max(B\vert_Z \cup B^Z)$ are $\{Z\} \cup \max(B)$ and the same construction as $\tau_Z$ also gives a bijection from $\{Z\} \cup \max(B)$ to $\max(B\vert_Z \cup B^Z)$.
It will sometimes be useful for us to use this ``extension" of $\tau_Z$ beyond the setting of links.
There, the map doesn't have any meaning in terms of simplicial complexes, but we will use it later in some of our arguments.
\end{note}

\begin{remark}[Comparing Notation]\label{rem:wondertopes-conventions}
Our notation differs slightly from the notation in \cite{wondertopes}.
First, our definition of matroid contraction differs slightly from \cite[Definition A.6]{wondertopes}.
We delete elements when we contract, so our map $\tau_Z$ looks slightly different.
Second, the authors of \cite{wondertopes} use $B|_X \times B^X$ to denote $(B|_X,\emptyset) \cup (\emptyset, B^X)$.
While this makes it easier to understand that this building set sits inside the product of the restriction and deletion poset, we have opted to use the more technically-accurate (although more cumbersome) notation of unions; see Lemma \ref{lem:building-set-union}.
\end{remark}

\begin{example}\label{ex:tau-map1}
Consider the following geometric lattice with maximal building set $B$ (circled).
\begin{center}
      \begin{tikzpicture}[scale=.6,inner sep=1] 
        \node (em) at (0, 0) {$\emptyset$};
        \node[draw,circle] (0) at (-3,2) {$0$};
        \node[draw,circle] (1) at (-1,2) {$1$};
        \node[draw,circle] (2) at (1, 2) {$2$};
        \node[draw,circle] (3) at (3, 2) {$3$};
        \node[draw,circle] (01) at (-3,4) {$01$};
        \node[draw,circle] (02) at (-1,4) {$02$};
        \node[draw,circle] (03) at (1, 4) {$03$};
        \node[draw,circle] (123) at (3, 4) {$123$};
        \node[draw,circle] (e) at (0, 6) {$0123$};
          \foreach \from/\to in {em/0, em/1, em/2, em/3, 0/01, 0/02, 0/03, 1/01, 1/123, 2/02, 2/123, 3/03, 3/123, 01/e, 02/e, 03/e, 123/e}
            \draw (\from) -- (\to);
    \end{tikzpicture}
\end{center}
The vertices of the link of $0$ are $\{01,02, 03, 0123\}$ and the restriction and contraction of $B$ at $X$ are
\[
    B|_X = \{0\} \qquad \text{and} \qquad
    B^X = \{01,02,03,0123\}\,.
\]
The $\tau_0$ map sends the vertices of the link of $0$ to $B|_X \cup B^X$ without the maximal elements $\{0, 0123\}$.
That is, 
\begin{align*}
    \tau_0(01) = (01 \vee 0)\setminus 0 = 1\\
    \tau_0(02) = (02 \vee 0)\setminus 0 = 2\\
    \tau_0(03) = (03 \vee 0)\setminus 0 = 3\,.
\end{align*}
On the level of posets, the image of $\tau_X$ sits inside the product poset $\cL|_X \times \cL^X$.
Below we draw $\cL|_X \times \cL^X$ and a poset which is isomorphic to this product under the map $(Y,Z) \mapsto Y\cup Z$.
To illustrate how the building sets sit inside this lattice, we circle $B|_X$ (in red) and $B^X$ (in blue) along with their images under this map.

\begin{center}
\begin{tikzpicture}[scale=.5,inner sep=1] 
    \node (em) at (0, 0) {$\emptyset$};
    \node[draw, circle,red] (0) at (0,4) {$0$};
      \foreach \from/\to in {em/0}
        \draw (\from) -- (\to);
    \node[] (times) at (3, 2) {$\times$};
\end{tikzpicture}
\qquad
\begin{tikzpicture}[scale=.5,inner sep=1] 
    \node (em) at (0, 0) {$\emptyset$};
    \node[draw, circle,blue] (1) at (-2,2) {$1$};
    \node[draw, circle,blue] (2) at (0, 2) {$2$};
    \node[draw, circle,blue] (3) at (2, 2) {$3$};
    \node[draw, circle,blue] (123) at (0, 4) {$123$};
      \foreach \from/\to in {em/1, em/2, em/3, 1/123, 2/123, 3/123}
        \draw (\from) -- (\to);

    \node[] (equals) at (4, 2) {$\cong$};
\end{tikzpicture}
\qquad
\begin{tikzpicture}[scale=.5,inner sep=1] 
    \node (em) at (0, 0) {$\emptyset$};
    \node[draw, circle,red] (0) at (-3,2) {$0$};
    \node[draw, circle,blue] (1) at (-1,2) {$1$};
    \node[draw, circle,blue] (2) at (1, 2) {$2$};
    \node[draw, circle,blue] (3) at (3, 2) {$3$};
    \node (01) at (-3,4) {$01$};
    \node (02) at (-1,4) {$02$};
    \node (03) at (1, 4) {$03$};
    \node[draw, circle,blue] (123) at (3, 4) {$123$};
    \node (e) at (0, 6) {$0123$};
      \foreach \from/\to in {em/0, em/1, em/2, em/3, 0/01, 0/02, 0/03, 1/01, 1/123, 2/02, 2/123, 3/03, 3/123, 01/e, 02/e, 03/e, 123/e}
        \draw (\from) -- (\to);
\end{tikzpicture}
\end{center}
Similarly, the map $\tau_1$ is a bijection between 
\[
\{01,123\} \qquad \text{and} \qquad \emptyset \cup \{01,23\}\,.
\]
Concretely, it is defined by
\begin{align*}
    \tau_1(01) & = (01 \vee 1) \setminus 1 = 0\\
    \tau_1(123) & = (123 \vee 1) \setminus 1 = 23\,.
\end{align*}
On the level of posets, $\tau_1$ sends some flats of the original poset $\cL$ to some flats of $\cL|_X \times \cL^X$.
The product poset now looks slightly different than it did for $\tau_0$.
Now we have (where again, the poset on the right is the image of the product poset under the map $(Y,Z) \mapsto Y\cup Z)$
\begin{center}
\begin{tikzpicture}[scale=.5,inner sep=1] 
    \node (em) at (0, 0) {$\emptyset$};
    \node[draw, circle,red] (0) at (0,4) {$1$};
      \foreach \from/\to in {em/0}
        \draw (\from) -- (\to);
    \node[] (times) at (3, 2) {$\times$};
\end{tikzpicture}
\qquad
\begin{tikzpicture}[scale=.5,inner sep=1] 
    \node (em) at (0, 0) {$\emptyset$};
    \node[draw, circle,blue] (0) at (-2,2) {$0$};
    \node[draw, circle,blue] (23) at (2, 2) {$23$};
    \node[draw, circle,blue] (023) at (0, 4) {$023$};
      \foreach \from/\to in {em/0, em/23, 0/023, 23/023}
        \draw (\from) -- (\to);

    \node[] (equals) at (4, 2) {$\cong$};
\end{tikzpicture}
\qquad
      \begin{tikzpicture}[scale=.6,inner sep=1] 
        \node (em) at (0, 0) {$\emptyset$};
        \node[draw,circle,red] (1) at (-3,2) {$1$};
        \node[draw,circle,blue] (0) at (0,2) {$0$};
        \node[draw,circle,blue] (23) at (3, 2) {$23$};
        \node[draw,circle,blue] (023) at (3, 4) {$023$};
        \node (10) at (-3,4) {$01$};
        \node (123) at (0, 4) {$123$};
        \node (1023) at (0, 6) {$0123$};
          \foreach \from/\to in {em/0, em/1, em/23, 1/10,1/123,0/10,0/023,23/123,23/023,023/1023,10/1023,123/1023}
            \draw (\from) -- (\to);
    \end{tikzpicture}
\end{center}
\end{example}

Remarkably, the combinatorics of $\tau_Z$ is also visible in the geometry of normal complexes.
On the geometric side, taking a link corresponds to intersecting with a facet-defining hyperplane.
In the following lemma, we show that this intersection is a normal complex of the direct sum of the restriction and contraction along $Z$.
We will find that the combinatorial isomorphism $\tau_Z$ translates to a pointwise equality on the level of normal complexes.

\begin{lemma}\label{lem:intersection}
    Let $\cN$ be a normal complex of $(\cL,B)$, $Z$ a flat in $\cL$, and $H_{Z,\varphi}$ one of the defining hyperplanes of $\cN$.
    Then 
    \begin{enumerate}
        \item $\cN \cap H_{Z,\varphi}$ is a normal complex $\cN_Z$ of $(\cL\vert_Z \times \cL^Z, B\vert_Z \sqcup B^Z)$ whose dimension is one less than the dimension of $\cN$, and 
        \item For any inclusion-maximal nested set $N$ containing $Z$, the vertex $v_N$ of $\cN$ is equal (as a point inside of $\R^E$) to the vertex $v_{\tau_Z(N)}$ of the normal complex $\cN_Z$.
    \end{enumerate}
\end{lemma}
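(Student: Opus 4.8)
The plan is to transport all the relevant data along the combinatorial isomorphism $\tau_Z$ of Theorem~\ref{thrm:wondertopes} and check that it is compatible with the geometry. Write $c\in\R^B$ for the vector defining $\cN=\cN_{\cL,B,\varphi}$ (Lemma~\ref{lem:c-is-enough}), so $H_{Z,\varphi}=H_{Z,c}=\{v:\langle v,e_Z\rangle=c_Z\}$, and assume $Z\in B\setminus\max(B)$. The first thing to record is how $\tau_Z$ moves the standard vectors: using Corollaries~\ref{cor:disjoint} and \ref{cor:incomp2} together with the extension of $\tau_Z$ to $\{Z\}\cup\max(B)$ of Note~\ref{note:maximalTau}, a case check on the trichotomy $X<Z$ / $X$ incomparable to $Z$ / $X>Z$ shows
\[ e_{\tau_Z(X)}=\begin{cases} e_X-e_Z, & X>Z,\\ e_X, & \text{otherwise,}\end{cases} \]
for every $X$ that is either a vertex of ${\sf Link}(\Delta(\cL,B);Z)$ or an element of $\{Z\}\cup\max(B)$. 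Consequently $L_{B|_Z\sqcup B^Z}=L_B+\spn_\R(e_Z)$, and for any nested set $N\ni Z$ one obtains $\sigma_{\tau_Z(N)}=\sigma_N+\spn_\R(e_Z)$, hence $\sigma_{\tau_Z(N)}^{\circ}=\sigma_N^{\circ}+\spn_\R(e_Z)$.

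Next I would set up the candidate normal complex $\cN_Z$. Define $c^{(Z)}\in\R^{B|_Z\sqcup B^Z}$ by $c^{(Z)}_{\tau_Z(X)}=c_X-c_Z$ when $X>Z$ and $c^{(Z)}_{\tau_Z(X)}=c_X$ otherwise; this is well defined because $\tau_Z$ restricts to a bijection from ${\sf LinkVert}(Z)\cup(\{Z\}\cup\max(B))$ onto $B|_Z\sqcup B^Z$. Let $\psi$ be the corresponding piecewise linear function on $\Sigma_{\cL|_Z\times\cL^Z,\,B|_Z\sqcup B^Z}$ and set $\cN_Z:=\cN_{\cL|_Z\times\cL^Z,\,B|_Z\sqcup B^Z,\,\psi}$. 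The core of the argument is then the pair of identities, valid for every nested set $N\ni Z$:
\[ v_N=v_{\tau_Z(N)}\qquad\text{and}\qquad P_{N,\varphi}\cap H_{Z,c}=P_{\tau_Z(N),\psi}. \]
For the first, since $Z\in N$ we have $\langle v_N,e_Z\rangle=c_Z$, so the formula above yields $\langle v_N,e_{\tau_Z(X)}\rangle=c^{(Z)}_{\tau_Z(X)}$ for all $X\in N$; combined with $v_N\in\sigma_{\tau_Z(N)}^{\circ}$ and the simpliciality of $\Sigma_{\cL|_Z\times\cL^Z,\,B|_Z\sqcup B^Z}$ this pins down $v_{\tau_Z(N)}$. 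Running this over all nested $N\ni Z$ also shows that $\psi$ is cubical, so $\cN_Z$ is genuinely a normal complex, and restricting to inclusion-maximal $N$ gives part~(2). For the second identity, on $H_{Z,c}$ the formula above turns the halfspace and hyperplane conditions defining $P_{\tau_Z(N),\psi}$ into those defining $P_{N,\varphi}\cap H_{Z,c}$; the one extra point to check is that $P_{\tau_Z(N),\psi}\subseteq\sigma_N$, which holds because each vertex of this cube is some $v_{\tau_Z(N'')}$ with $\max(B)\cup\{Z\}\subseteq N''\subseteq N$, and by the first identity $v_{\tau_Z(N'')}=v_{N''}\in\sigma_{N''}^{\circ}\subseteq\sigma_N$. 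Matching all the conditions then gives the equality of polytopes.

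Finally, to deduce part~(1): by Lemma~\ref{lem:dim-nested-set}, $\cN_Z$ is pure of dimension $\rank(M|_Z\oplus M^Z)-|\max(B|_Z\sqcup B^Z)|=\rank(M)-|\max(B)|-1=\dim\cN-1$, and since $\tau_Z$ identifies the inclusion-maximal nested sets of $B$ containing $Z$ with those of $B|_Z\sqcup B^Z$, the second identity gives $\cN_Z=\bigcup_{N\ni Z}(P_{N,\varphi}\cap H_{Z,c})\subseteq\cN\cap H_{Z,c}$, the facets of $\cN_Z$ being precisely the facets $P_{N,\varphi}\cap H_{Z,c}$ (over inclusion-maximal $N$ with $Z\in N$) of the cubes of $\cN$. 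I expect the main obstacle to be the reverse inclusion $\cN\cap H_{Z,c}\subseteq\cN_Z$, that is, ruling out new contributions from cubes $P_{N',\varphi}$ with $Z\notin N'$. The natural route is to first show that $|\cN|$ lies weakly on one side of $H_{Z,c}$ — equivalently $|\cN|\subseteq\bigcap_{X\in B}H^+_{X,\varphi}$, a convexity property of cubical functions that can be verified on ray generators or quoted from \cite{NR} — so that $\cN\cap H_{Z,c}$ is the face of $\cN$ exposed by $\langle\,\cdot\,,e_Z\rangle$; this face is pure of dimension $\dim\cN-1$, and a pure polyhedral complex is the union of its facets, which we have just identified with those of $\cN_Z$. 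Hence $\cN\cap H_{Z,c}=\cN_Z$.
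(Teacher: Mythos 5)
Your construction is, at its core, the same as the paper's: your transported weight vector $c^{(Z)}$ is exactly the paper's $\tau_Z(c)$; your identity $e_{\tau_Z(X)}=e_X-e_Z$ for $X>Z$ (and $e_{\tau_Z(X)}=e_X$ otherwise) is the paper's computation $H_{\tau_Z(X),\tau_Z(c)}\cap H_{Z,\varphi}=H_{X,\varphi}\cap H_{Z,\varphi}$; and the vertex identity $v_N=v_{\tau_Z(N)}$ together with $\sigma_N^{\circ}\subseteq\sigma_{\tau_Z(N)}^{\circ}$ is precisely how the paper verifies that $\tau_Z(c)$ is cubical and deduces part (2). Your explicit cube-level identity $P_{N,\varphi}\cap H_{Z,c}=P_{\tau_Z(N),\psi}$, proved by matching constraints and using that the vertices $v_{N''}$ of the cube lie in $\sigma_{N''}\subseteq\sigma_N$, is a welcome elaboration of a point the paper leaves implicit.

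The one step that fails as written is your route to the reverse inclusion $\cN\cap H_{Z,c}\subseteq\cN_Z$. The assertion $|\cN|\subseteq\bigcap_{X\in B}H^+_{X,\varphi}$ is false: the definition of $P_{N',\varphi}$ imposes the constraint from $H^+_{X,\varphi}$ only for $X\in N'$, and for $X\notin N'$ the cube typically lies strictly on the \emph{other} side of $H_{X,\varphi}$ (in the example of Section \ref{section:minimal}, the vertex of the cube for $\{0,2,123\}$ fails the inequality attached to the flat $1$). Relatedly, ``$|\cN|$ lies weakly on one side of $H_{Z,c}$'' is not equivalent to that intersection condition: one must identify the correct side, and this one-sidedness of the whole non-convex complex is a global statement that does not follow from the cubical condition cone-by-cone; it is not something one can simply ``verify on ray generators,'' and you do not supply a reference in \cite{NR} that does it. To be fair, this gap concerns only a point the paper's own proof does not engage with either --- the paper folds the identification of $\cN\cap H_{Z,\varphi}$ with $\bigcup_{N\ni Z}\bigl(P_{N,\varphi}\cap H_{Z,\varphi}\bigr)$ into Definition \ref{def:facets} and the remark that $H_{Z,\varphi}$ is facet-defining. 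If you want to close it, you should either establish the one-sidedness with the correct orientation, or argue directly that for $Z\notin N'$ any nonempty intersection $P_{N',\varphi}\cap H_{Z,\varphi}$ is contained in a face of $P_{N',\varphi}$ that is shared with a cube whose nested set contains $Z$.
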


\begin{proof}
For the first statement, the fact that $\cN \cap H_{Z,\varphi}$ is a polytopal complex of one dimension less follows from the fact that $H_{Z,\varphi}$ is a facet-defining hyperplane; see Definition \ref{def:facets}.
The tricky part is to check that the intersection is a normal complex of the direct sum of the contraction and restriction. 
To see that $\cN \cap H_{Z,\varphi}$ is the normal complex of $(\cL\vert_Z \times \cL^Z, B\vert_Z \sqcup B^Z)$, we will use the fact that the nested set complex of $(\cL\vert_Z \times \cL^Z, B\vert_Z \sqcup B^Z)$ is isomorphic to the link of $Z$ in the nested set complex of $(\cL,B)$.
We will use the correspondence between the cones of their Bergman fans (here: Bergman fan of a link is used informally to mean the cones of the Bergman fan of $(\cL,B)$ that correspond to nested sets of the link of $Z$).

Let $c \in \mathbb{R}^B$ be the vector defining the cubical function $\varphi$ for $\cN$ as described in Lemma \ref{lem:c-is-enough}. Define $\tau_Z(c)\in \mathbb{R}^{B\vert_Z \sqcup B^Z}$ by declaring \[
\tau_Z(c)_{\tau_Z(X)} = \begin{cases}
c_X- c_Z,& Z<X\\
c_X,& \text{else.}
\end{cases}\]
Using the notation from Note \ref{note:hxphi-hxc}, we first prove that \[H_{\tau_Z(X),\tau_Z(c)} \cap H_{\tau_Z(Z),\tau_Z(c)}= H_{X,\varphi} \cap H_{Z,\varphi}\] for all $X\neq Z$ such that $\{X,Z\}\cup \max(B)$ is nested. Once we prove this, the only thing remaining to check for the first part of our claim is that $\tau_Z(c)$ is cubical. That is, $v_{\tau_Z(N)}$ is in the relative interior of $\sigma_{\tau_Z(N)}$.

Let $X$ be a flat of $B$, not equal to $Z$, such that $\{X,Z\} \cup \max(B)$ is nested. If $Z< X$, then $\tau_Z(X) = X\setminus Z$. In this case, \begin{align*}H_{\tau_Z(X),\tau_Z(c)} \cap H_{\tau_Z(Z),\tau_Z(c)} &= \{v\in \R^E: \langle v, e_{X\setminus Z} \rangle  = c_X-c_Z \text{ and } \langle v, e_{Z} \rangle = c_Z\}\\
&= \{v\in \R^E: \langle v, e_{X} \rangle  = c_X \text{ and } \langle v, e_{Z} \rangle = c_Z\}\\
&= H_{X,\varphi} \cap H_{Z,\varphi}.
\end{align*}
If $X$ and $Z$ are incomparable or $X< Z$ then, by \Cref{cor:incomp2}, $\tau_Z(X)=X$. In this case, $H_{\tau_Z(X), \tau_Z(c)}$ is literally equal to $H_{X,\varphi}$.

Now given a nested set $\tau_Z(N)$ of $(\cL\vert_Z \times \cL^Z, B\vert_Z \sqcup B^Z)$, we need to check that the point $v_{\tau_Z(N)} = \bigcap_{\tau_Z(X)\in \tau_Z(N)} H_{\tau_Z(X),\tau_Z(c)}$ is contained in the relative interior of $\sigma_{\tau_Z(N)}$. The calculations in the previous paragraph imply that $v_{\tau_Z(N)}= v_N$. As $\varphi$ is cubical, $v_N$ sits inside the relative interior of $\sigma_N$. Thus our claim follows once we check that $\sigma_N \subseteq \sigma_{\tau_Z(N)}$. This, in turn, can be checked by confirming that the set $\{e_X: X\in N\setminus \max(B)\} \cup \{e_Y,-e_Y: Y\in \max(B)\}$ is contained in $\sigma_{\tau_Z(N)}$. However, this check is immediate from the definition of $\tau_Z$ used in the previous paragraph.
This proves the first part of the claim.

The second part of the claim follows immediately from the preceding paragraph.
\end{proof}

\begin{proposition}\label{prop:lex_link}
    Let $\cN$ be a normal complex of $(\cL,B)$, $\gamma$ a lexicographic vector on $\cN$, $Z$ a flat of $B\setminus \max(B)$ and $\cN_Z$ be the normal complex of $(\cL\vert_Z \times \cL^Z, B\vert_Z \cup B^Z)$ described in \Cref{lem:intersection}. Also let $\tau_Z$ be the bijection between the nested sets of $(\cL,B)$ containing $Z$ and the nested sets of $(\cL\vert_Z \times \cL^Z, B\vert_Z \cup B^Z)$ defined in \Cref{sec:links}. Then
    \begin{itemize}
        \item $\gamma$ is lexicographic on $\cN_Z$, and
        \item for maximal nested sets $N$ and $N'$ containing $Z$, we have
    \[N<_{\cN} N' \qquad \iff \qquad \tau_Z(N) <_{\cN} \tau_Z(N').\]
    \end{itemize} 
\end{proposition}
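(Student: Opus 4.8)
The plan is to derive both bullet points as formal consequences of \Cref{lem:intersection}(2), which has already done the geometric work: it identifies the vertices of $\cN$ indexed by maximal nested sets containing $Z$ with the corresponding vertices of $\cN_Z$ as literal points of $\R^E$. The first observation to make is that both complexes live in the same ambient vector space: the ground set of $M\vert_Z \oplus M^Z$ is $Z \sqcup (E\setminus Z) = E$, so $\cN_Z$ is a normal complex in $\R^E$, and the lexicographic vector $\gamma \in \R^E$ pairs with the vertices of either complex. Moreover, by \Cref{thrm:wondertopes}, the map $\tau_Z$ restricts to a bijection from the inclusion-maximal nested sets $N$ of $(\cL,B)$ with $Z \in N$ onto the inclusion-maximal nested sets of $(\cL\vert_Z \times \cL^Z, B\vert_Z \cup B^Z)$; in particular, as $N$ ranges over the maximal nested sets of $(\cL,B)$ containing $Z$, its image $\tau_Z(N)$ ranges over \emph{all} maximal nested sets of the product.

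Next I would invoke \Cref{lem:intersection}(2): for every such $N$, the vertex $v_N$ of $\cN$ and the vertex $v_{\tau_Z(N)}$ of $\cN_Z$ are the same point of $\R^E$. Consequently $\langle v_N, \gamma\rangle = \langle v_{\tau_Z(N)}, \gamma\rangle$ and the coordinate vectors $(v_N)_\bullet$ and $(v_{\tau_Z(N)})_\bullet$ agree entrywise. For the first bullet, one unravels \Cref{def:lexicographic} for $\cN_Z$: a pair of maximal nested sets of the product may be written as $\tau_Z(N), \tau_Z(N')$ with $N, N'$ maximal nested sets of $(\cL,B)$ containing $Z$, and after the substitutions $v_{\tau_Z(N)} = v_N$ and $v_{\tau_Z(N')} = v_{N'}$, the defining equivalence for ``$\gamma$ lexicographic on $\cN_Z$'' becomes exactly the defining equivalence for ``$\gamma$ lexicographic on $\cN$'' restricted to this pair, which holds by hypothesis. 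For the second bullet (reading its right-hand side as $<_{\cN_Z}$), \Cref{def:normal-complex-order} gives
\[
N <_{\cN} N' \iff \langle v_N, \gamma\rangle < \langle v_{N'}, \gamma\rangle \iff \langle v_{\tau_Z(N)}, \gamma\rangle < \langle v_{\tau_Z(N')}, \gamma\rangle \iff \tau_Z(N) <_{\cN_Z} \tau_Z(N').
\]

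I do not expect a genuine obstacle here: all of the substantive content — that intersecting $\cN$ with $H_{Z,\varphi}$ yields a normal complex of the direct sum and that the relevant vertices coincide pointwise — is already packaged in \Cref{lem:intersection}. The only points needing care are bookkeeping ones: confirming the ambient-space identification above so that ``$\gamma$'' genuinely denotes the same vector on both sides, and noting that both notions in play — being a lexicographic vector and the induced normal complex order — are defined purely through the pairing of $\gamma$ with the vertex \emph{points}, hence are transported verbatim by any pointwise identification of those vertices, such as the one furnished by $\tau_Z$.
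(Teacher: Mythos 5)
Your proposal is correct and is exactly the paper's argument: the paper's proof consists of the single observation that both claims follow immediately from the second part of Lemma \ref{lem:intersection}, i.e.\ from the pointwise identification $v_N = v_{\tau_Z(N)}$, which is precisely the identification you unpack. Your additional bookkeeping (same ambient space $\R^E$, $\tau_Z$ surjecting onto the maximal nested sets of the product) is sound and merely makes explicit what the paper leaves implicit.
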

\begin{proof}
    Both claims follow immediately from the second part of \Cref{lem:intersection}. 
\end{proof}

\begin{remark}
Proposition \ref{prop:lex_link} relies on the geometry of the normal complex order.
The same statement often does not hold for the NL-order.
\end{remark}

\section{Proof of \Cref{thm1}}\label{sec:proof-of-main}

The goal of this section is to prove \Cref{thm1} by giving a shelling order on the Bergman fan of a matroid.
The proof is somewhat involved, and requires several definitions and preparatory results. 

In \Cref{subsec:combo_order}, we define a linear order on the facets of a nested set complex which we call the NL-order.
This linear order generalizes Björner's classical EL-shelling order; the two coincide in the case of the maximal building set.
The proof of our main theorem comes from leveraging the combinatorial properties of the NL-order against the geometry of normal complexes. 
In Subsection \ref{subsec:forestry}, we develop properties of this combinatorial order and its connection to the map $\tau_Z$ defined in Section \ref{sec:links}.
In \Cref{subsec:geometry}, we put everything together to prove \Cref{thm1}.

\subsection{A Combinatorial Order on the Nested Set Complex} \label{subsec:combo_order}
Let $M$ be a loopless matroid of rank $r$ and $B$ a building set of $\cL=\cL(M)$. 
Suppose that the set of atoms $\cL_1$ of $M$ is totally ordered to agree with a total ordering of the ground set of $M$. That is, for atoms $A$ and $A'$, we say that $A<A'$ if $\min(A)<\min(A')$. 

For a nested set $N$, consider the labeling $m_N: N \to \cL_1$ defined by 
\[m_N(X) = \min\left\{A \in \cL_1: A \leq X , A \not\leq Y \text{ for all } Y \in N \text{ with } Y< X \right\}\,. \]
Note that when $M$ is simple, each atom in $\cL_1$ is a singleton. 
We have to be slightly more careful here, as some of our inductive arguments require that we pass to a matroid contraction, and contracting a flat can produce parallel elements.\footnote{For example, if we take the matroid $U_{3,2}$ (the uniform matroid on three elements of rank $2$) and contract a single element, we obtain $U_{2,1}$ (the uniform matroid on two elements of rank $1$).}

The following lemma establishes that this labeling map is actually a sensible thing to define.

\begin{lemma}\label{lem:basic_NL}
    Let $N$ be a nested set of $(\cL,B)$. The labeling function $m_N: N\to \cL_1$ is well defined, and injective.

\end{lemma}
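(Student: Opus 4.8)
The plan is to prove well-definedness and injectivity separately, each time just unwinding the definition of $m_N$ against the combinatorics of nested sets: Definition~\ref{defn:nested}, Corollary~\ref{cor:disjoint} (incomparable flats of a reduced nested set are disjoint), and Lemma~\ref{lem:max-partition} (the elements of $\max(B)$ partition $E$, and every flat of $B$ lies below a unique element of $\max(B)$). For well-definedness, fix $X\in N$ and let $S_X$ be the set of atoms $A\in\cL_1$ with $A\le X$ but $A\not\le Y$ for every $Y\in N$ with $Y<X$; I must show $S_X\neq\emptyset$. Since $X\in B\subseteq\cL\setminus\{\emptyset\}$ and $M$ is loopless, $X$ is the join of the atoms below it. Let $Y_1,\dots,Y_k$ be the inclusion-maximal elements of $\{Y\in N: Y<X\}$; if $k=0$ then $S_X$ is the nonempty set of all atoms below $X$ and we are done. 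Otherwise, if $S_X=\emptyset$ then every atom below $X$ lies below some such $Y$, hence below some $Y_i$, so $X=\bigvee\{A\in\cL_1: A\le X\}\le\bigvee_{i=1}^k Y_i\le X$, forcing $X=\bigvee_{i=1}^k Y_i$. But $\{Y_1,\dots,Y_k\}$ is an antichain in $N$: for $k\ge2$ this contradicts Definition~\ref{defn:nested}(2) since $X\in B$, and for $k=1$ it contradicts $Y_1<X$. Hence $S_X\neq\emptyset$.

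For injectivity, suppose $m_N(X)=m_N(X')=A$ with $X\neq X'$; then $A\le X$ and $A\le X'$, so $\emptyset\neq A\le X\wedge X'$. If $X$ and $X'$ are comparable, say $X<X'$, then $X\in N$ lies strictly below $X'$ with $A\le X$, contradicting the condition defining $m_N(X')$; so $X$ and $X'$ are incomparable. If neither lies in $\max(B)$, then $\{X,X'\}$ is a reduced nested set of incomparable flats (being a subset of $N$), so $X\wedge X'=\emptyset$ by Corollary~\ref{cor:disjoint}, a contradiction. Finally, if $X\in\max(B)$ (the other case being symmetric), note the atom $A$, being a connected flat, lies in $B$, so by Lemma~\ref{lem:max-partition}(3) it lies below a unique element of $\max(B)$; since $A\le X$ and $X\in\max(B)$, that element is $X$. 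On the other hand $X'$ lies below a unique element of $\max(B)$, and as $A\le X'$ this element too contains $A$, hence equals $X$; thus $X'\le X$, contradicting incomparability. Hence $m_N$ is injective.

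I expect no real obstacle here; the one point requiring attention is that Corollary~\ref{cor:disjoint} is stated only for reduced nested sets (flats strictly below $\max(B)$), so the disjointness needed in the incomparable case has to be obtained instead from the partition statement of Lemma~\ref{lem:max-partition} whenever an element of $\max(B)$ is involved. Everything else is a direct unwinding of definitions.
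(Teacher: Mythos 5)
Your proof is correct and follows essentially the same route as the paper: well-definedness via the antichain of maximal elements below $X$ whose join would force $X\notin B$, and injectivity via comparability being excluded by the definition of $m_N$ plus disjointness of incomparable flats. Your extra case analysis for when one of $X,X'$ lies in $\max(B)$ (where Corollary~\ref{cor:disjoint} as literally stated does not apply) is a careful refinement the paper glosses over, but it does not change the argument in substance.
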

\begin{proof}
    To see that $m_N$ is well defined, suppose instead that all of the atoms below some flat $X$ were all contained in a union of flats $Y_1,\ldots, Y_k$ with $Y_i\in N$ and $Y_i\lneq X$. Without loss of generality, we can assume that the $Y_i$ form an antichain. But now $\bigvee_{i=1}^k Y_i = X$ and the definition of a nested set implies $X\not\in B$, which is a contradiction. 

    To see the injectivity of $m_N$, suppose that instead $m_N(X)=A=m_N(X')$ for two different flats $X$ and $X'$ of $N$. By the definition of $m_N$, this could only be possible if $X$ and $X'$ were incomparable. But we already know $A\subseteq X\cap X'$, so by \Cref{cor:disjoint} this is impossible. 
\end{proof}

We are now ready to define a combinatorial order on the facets of $\Delta(\cL,B)$, which we call the nested lexicographic order.  
The nested lexicographic order is different from our geometric normal complex order, but the two are related, and we utilize this relationship in the proof of Theorem \ref{thm1}.

The intuition for the nested lexicographic order is best understood through the perspective of Observation \ref{obs:forest}.
From this perspective, we can think of $m_N$ labeling the vertices of the forest of $N$.
The label of $X$ is the smallest atom below $X$ that does not also sit below any of the children of $X$ (in the forest of $N$).
There is a natural way to list the values $\{m_N(X) \mid X \in N\}$ by ``plucking'' $m_N$-minimal leaves of this forest and recording their $m_N$ values.
We make this precise now.

Given a nested set $N$, let $\min{}_*(N)$ be the inclusion-minimal flat $X$ of $N$ that has the smallest labeling $m_N(X)$ among all of the inclusion-minimal flats of $N$ (in the language of Observation \ref{obs:forest}, this is the leaf with the smallest $m_N$-label).
Given a nested set $N$, consider the ordering $X_1<X_2<\ldots<X_k$ of the flats of $N$ defined recursively by 
\begin{equation}\label{eq:internal_order}
X_i = \min{}_*(N\setminus \{X_1,\ldots, X_{i-1}\}). 
\end{equation}
In the language of Observation \ref{obs:forest}, $\min{}_*(N\setminus \{X_1,\ldots, X_{i-1}\})$ is the leaf with smallest $m_N$-label once we have removed $X_1,\ldots, X_{i-1}$ from $N$.

The \Dfn{NL-labeling} (nested lexicographic labeling) of the nested sets of $(\cL,B)$ is the function which assigns to each nested set $N= \{X_1,\ldots, X_k\}$ (where $X_1<X_2<\ldots<X_k$ in ordering defined by \Cref{eq:internal_order}) the sequence of distinct atoms 
\[m(N)\coloneqq (m_N(X_1), m_N(X_2),\ldots, m_N(X_k)). \]
The \Dfn{NL-ordering} (nested lexicographic ordering) 
is the linear order of the facets of $\Delta(\cL,B)$ induced by the lexicographic order on the NL-labelings of the facets; see Definition \ref{def:usual-lex-order}.  If $N,N'$ are maximal nested sets such that $N$ comes before $N'$ in the NL-order, we may write $N \prec_{NL} N'$.

\begin{remark}\label{ELremark}
As noted above, if $B$ is the maximal building set, the NL-order specializes to Bj\"orner's EL-order on the maximal chains in the lattice of flats.  For this claim to be technically correct, we must first perform a very slight modification of the EL-order.  The name EL-order stands for \emph{edge-lexicographic order} as this order is induced by a labeling of the edges of the Hasse diagram.  This labeling restricted to a fixed chain can be \emph{shifted a half step upwards} to induce a labeling of the vertices of this chain.  Once this shifting has been performed, the order agrees precisely with the NL-order for the order complex.  We note that, unlike the EL-order, the NL-order appears to only be definable via a vertex labeling and not via an edge labeling.    
\end{remark}

\begin{remark}\label{recursiveELremark}
We explain here how the EL-shelling of the order complex of the lattice of flats of a matroid can be verified in an inductive way which parallels the proof of the line shelling order of a projective fan (see the proof sketch for Theorem \ref{lineshellingtheorem}).

To begin, we naturally extend Bj\"orner's EL-order $\prec_{EL}$ from the order complex of the lattice of flats to joins of such complexes.  We describe here the order in the case of the join of an ordered pair of order complexes, and leave it to the reader to extend this notion to the join of a $k$-tuple of order complexes.  If $\Delta_1, \Delta_2$ are order complexes of reduced lattices of flats, i.e., with bottom and top elements of the lattices removed, and $(C, D),(C',D') \in \Delta_1 * \Delta_2$, we define $(C, D)\prec_{EL} (C',D')$ if $C \prec_{EL}C'$, or $C = C'$ and $D \prec_{EL} D'$.

For inductively verifying the EL-order is a shelling, we must work in greater generality of joins of $k$-tuples of order complexes of reduced lattices of flats of a matroid.  Let $\Delta$ be such a complex.  We proceed by induction on the dimension of $\Delta$ where the base case is a point. Next, suppose that $C,C'$ are facets of $\Delta$ with $C \prec C'$.  We wish to find a facet $C''$ such that $C \cap C' \subseteq C''\cap C' = C' \setminus \{X\}$ for some flat $X$.  Case 1: $C\cap C' = \emptyset.$  In this case, because $C'$ is not the minimum facet, Bj\"orner shows that there is a descent at a flat $X$ in a component chain in $C'$, and performing a swap at $X$ produces the desired facet $C''$.  Case 2:  There exists some $Z \in C \cap C'$.  We look at the image of $C$ and $C'$ in the link  of $Z$.  The EL-order on $\Delta$ restricts to the corresponding EL-order on the link of $Z$, and the desired result follows by induction on the dimension of the simplicial complex.

We make a few observations about the above argument.  The join of order complexes is a nested set complex for a certain building set on the direct sum of underlying matroids, and the NL-order on this nested set complex generalizes the order described above; it is possible, depending on the order of the ground set, for $(C, D)\prec_{NL} (C',D')$ even though $C' \prec_{NL} C$.  

The argument above works because if $C\prec_{EL}C'$, then this relative order is  preserved for their images in the link of the flat $Z$.  This is not so for the NL-order! (See example \ref{localtauexample}.) This obstacle prevents one from proving in such a straightforward inductive way that the NL-order is a shelling order.  On the other hand, the normal complex order does have this desired property and this is what allows us to provide an inductive proof that the normal complex order is a shelling order.
\end{remark}

The following proposition ensures that the NL-ordering is indeed a linear ordering.

\begin{lemma}\label{lem:nl_unique}
    Every inclusion-maximal set $N$ is uniquely reconstructible from its NL-labeling. In particular, if $N'$ and $N''$ are two inclusion-maximal nested sets such that $m(N')=m(N'')$, then $N'=N''$.
\end{lemma}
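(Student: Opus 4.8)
The plan is to reconstruct $N$ from its NL-labeling $m(N) = (A_1, \ldots, A_r)$ by recovering the flats one at a time, in the order $X_1 < X_2 < \cdots < X_r$ given by \Cref{eq:internal_order}, using the forest structure of \Cref{obs:forest}. The key idea is that reading the labels from left to right corresponds to "plucking leaves" of the forest of $N$, and at each stage the flat being plucked is forced by the data we have already recovered together with the ambient building set $B$.

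First I would set up the induction on $i$: having recovered $X_1, \ldots, X_{i-1}$ (and knowing they must equal the corresponding flats of any nested set with the same NL-labeling), I claim $X_i$ is determined. By definition $X_i = \min{}_*(N \setminus \{X_1,\ldots,X_{i-1}\})$, and $m_N(X_i) = A_i$. The crucial observation is that, after removing $X_1, \ldots, X_{i-1}$, the flat $X_i$ is a leaf of the remaining forest; i.e., every flat of $N$ strictly below $X_i$ lies in $\{X_1, \ldots, X_{i-1}\}$. Therefore $m_N(X_i)$ is the smallest atom below $X_i$ not below any $X_j$ ($j < i$, $X_j < X_i$), and moreover the set of atoms below $X_i$ is, by \Cref{lem:incomp} applied to the antichain of children of $X_i$ among $\{X_1,\ldots,X_{i-1}\}$, exactly the union of those children's atom sets together with the "new" atoms labeled at $X_i$ and at flats below it — but all of those are among $A_1, \ldots, A_i$. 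Concretely, I would argue that $X_i$ is the join of $A_i$ with all already-recovered $X_j$ whose label $A_j$ is an atom of $X_i$; and then characterize which $j < i$ these are intrinsically: $X_j$ is a child of $X_i$ iff $X_j$ is an inclusion-maximal flat among $\{X_1,\ldots,X_{i-1}\}$ that, when joined with $A_i$ and with the other candidate children, produces an element of $B$. Since $B$ is fixed and known, this join and the membership test $\in B$ are determined by $A_1,\ldots,A_i$ and $X_1,\ldots,X_{i-1}$ alone. Thus $X_i$ is uniquely determined, completing the induction.

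The main obstacle I anticipate is pinning down precisely which of the already-recovered flats $X_1, \ldots, X_{i-1}$ are the children of $X_i$ in the forest, purely from the labeling data, without circularity. The danger is that some $X_j$ with $j < i$ lies below $X_i$ but was plucked earlier for reasons having to do with a different part of the forest; I need to rule out that the reconstruction "misassigns" such an $X_j$. The resolution should come from the min${}_*$ tie-breaking rule: among inclusion-minimal flats we always pluck the one of smallest label, and within the subtree below $X_i$ the labels $m_N(X_j)$ for $X_j < X_i$ are all genuinely smaller in the relevant sense than $A_i = m_N(X_i)$ is "relative to" — here I would use that $m_N(X_i)$ is by construction not below any child of $X_i$, so $A_i \neq A_j$ for children $X_j$, and use \Cref{cor:disjoint} / \Cref{lem:incomp} to separate the atom sets of incomparable recovered flats. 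Carefully, the argument is: given $A_1, \ldots, A_i$ and $X_1, \ldots, X_{i-1}$, form the candidate set $S$ of all inclusion-maximal $X_j$ ($j<i$) with $A_j \le (\text{the eventual }X_i)$ — but since we don't yet know $X_i$, instead define $X_i$ directly as the unique flat $X \in B$ containing $A_i$ such that (a) $X \not\le X_j$ for any $j < i$ with $X_j$ not a child, and (b) $X$ is the join of $A_i$ together with the maximal $X_j$'s it dominates, and verify existence/uniqueness of such $X$ using that $N$ is nested and $B$ is a building set. I would expect to need \Cref{lem:basic_NL} (injectivity of $m_N$, so the $A_i$ are distinct) and \Cref{lem:max-partition}(1) (so $|N| = r$ is known in advance, confirming we recover exactly $r$ flats and stop). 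The second sentence of the lemma is then immediate: if $m(N') = m(N'')$ then the reconstruction procedure applied to this common labeling produces both $N'$ and $N''$, so $N' = N''$.
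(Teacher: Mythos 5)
Your overall strategy (induct on $i$, reconstruct the flats in the plucking order) matches the paper's, but the reconstruction rule you propose is not actually well-posed, and this is the crux of the lemma. Your conditions (a) and (b) still refer to ``children'' of $X_i$ and to ``the maximal $X_j$'s it dominates,'' both of which presuppose knowledge of the flat you are trying to define; and even read charitably (``child'' $=$ maximal recovered flat below $X$), the conditions do not single out a unique flat: since every atom is a connected flat, $A_i$ itself lies in $B$ and vacuously satisfies (b) (it is the join of $A_i$ with the empty set of dominated flats), so $X = A_i$ is always a competing candidate alongside the true $X_i$. You need a maximality clause, and the paper supplies exactly that: it defines $X_k$ as the \emph{unique inclusion-maximal} element of $B$ with $A_k \leq X_k \leq A_k \vee \bigvee_{j<k} X_j$, joining $A_k$ with \emph{all} previously recovered flats rather than attempting to identify the children. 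Uniqueness of this maximal element follows from \Cref{prop:backman-danner} (two maximal candidates would share the atom $A_k$, so their join would again lie in $B$ and in the interval), which sidesteps your circularity problem entirely.

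The second, larger gap is that defining a candidate $\tilde N = \{X_1,\dots,X_r\}$ is only half the proof: one must still show $X_k = Y_k$ for the actual flats $Y_k$ of $N$. This is where the paper does its real work, via a three-way case analysis ($Y_k > X_k$, $Y_k < X_k$, incomparable), using the rank count $|\{Y_i : i \leq k,\, Y_i \leq Y_k\}| = \rank(Y_k)$ together with \Cref{lem:incomp} and \Cref{lem:dim-nested-set} to derive a contradiction in each case. Your proposal gestures at ``verify existence/uniqueness using that $N$ is nested and $B$ is a building set'' but does not carry out this comparison, and it is not routine: for instance, ruling out the incomparable case requires producing two distinct maximal elements of $B$ in a common interval that share the atom $A_k$, contradicting \Cref{lem:dim-nested-set}(3). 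Your incidental claim that $X_i$ is the join of $A_i$ with its children in the forest is true, but it is a consequence of the rank bookkeeping rather than a usable definition.
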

\begin{proof}
    We only need to prove the first statement. Suppose that we are given a tuple of atoms $(A_1,\ldots,A_r)$ and we know that there exists a nested set $N=\{Y_1,\ldots, Y_r\}$ such that 
    \[m(N) = (m_{N}(Y_1),m_N(Y_2),\ldots, m_N(Y_r)) = (A_1,\ldots, A_r)\,. \]
    We now give a recursive recipe to construct a nested set $\tilde N = \{X_1,\ldots,X_r\}$. We will then inductively prove that $Y_i=X_i$, thereby proving the claim. 

    Let $X_1= A_1$. For $k\geq 2$, recursively define $X_k$ to be the unique inclusion-maximal element of $B$ such that
    \[    A_k \leq X_k \leq A_k \vee \bigvee_{i=1}^{k-1} X_i.\]
    Note that this construction is well-defined and does not produce any duplicate flats. Indeed, as $(A_1,\ldots, A_r)$ is the NL-labeling of an inclusion-maximal nested set, we know that $A_k \not\leq \bigvee_{i=1}^{k-1} X_i$ for all $2\leq k \leq r$.

    We now make two observations about $\tilde N$. First, $\tilde N$ is a nested set.
    To see this, consider an antichain $\{X_{i_1},\ldots, X_{i_\ell}\}$ of flats of $\tilde N$ with $i_1<i_2<\ldots< i_\ell$ and $\ell\geq 2$. We will show that $Z= \bigvee_{j=1}^\ell X_{i_j} \not\in B$. By definition, 
    \[A_{i_\ell} \leq Z \leq \bigvee_{j=1}^{i_\ell} X_j \leq A_{i_\ell} \vee \bigvee_{j=1}^{i_{\ell}-1} X_j. \]
    Our construction chose $X_{i_{\ell}}$ to be the inclusion-maximal element of $B$ satisfying this condition. However, we know that $X_{i_\ell} <Z$, thus $Z\not\in B$.

    Our second observation is that, for all $k\geq 1$, the rank of $\bigvee_{i=1}^k X_i$ is equal to $k$ and $\bigvee_{i=1}^k X_i= \bigvee_{i=1}^k A_i$. To see this, note that
     $\rank(\bigvee_{i=1}^k X_i)  > \rank(\bigvee_{i=1}^{k-1} X_i)$ and joining an atom can increase the rank by at most 1, i.e. $\rank(\bigvee_{i=1}^k X_i)  \leq \rank(\bigvee_{i=1}^{k-1} X_i) +1$.

    We now prove by induction on $k$ that $Y_k=X_k$. Before beginning our induction, we note that if $Y_i \leq Y_k$ then $i\leq k$. 
    Combining \Cref{thrm:wondertopes} and the first part of \Cref{lem:dim-nested-set}, this observation implies 
    \begin{equation}\label{eq:rank}\tag{$\ast$}
    |\{Y_i: i\leq k, Y_i \leq Y_k\}| = \rank(Y_k)\,.
    \end{equation}
    We are now ready to start the inductive argument.
        \Cref{eq:rank} implies that $\rank(Y_1)=1$ so $Y_1=A_1=X_1$ and our claim is true in the base case.
        Now assume $k \geq 2$ and that $Y_i=X_i$ for all $i<k$. We now aim to prove that $Y_k=X_k$. We do so by showing that we cannot have $Y_k>X_k$, $Y_k<X_k$, or $Y_k,X_k$ being incomparable.
        \begin{enumerate}
            \item {\sf Case 1}: $Y_k > X_k$. As a consequence of \Cref{eq:rank}, the set $\{Y_k\} \cup \{X_i: i<k, X_i< Y_k \}$ is an inclusion-maximal nested set of $(\cL\vert_{Y_k}, B\vert_{Y_k})$.
            Since $X_k< Y_k$ and $\tilde N$ is a nested set, we also find that
            \[\{Y_k, X_k \} \cup \{X_i: i<k, X_i< Y_k \}\]
            is a nested set of $(\cL\vert_{Y_k}, B\vert_{Y_k})$.
            This is a contradiction.
            \item {\sf Case 2}: $Y_k<X_k$. An identical argument to the first case yields the claim.
            \item {\sf Case 3}: $Y_k$ and $X_k$ are incomparable. As $Y_k$ contains $A_k$, $Y_k$ is an inclusion maximal flat of the set $\{X_1,\ldots, X_{k-1}, Y_k\}$. By \Cref{lem:incomp}, this implies that $Y_k$ is an inclusion maximal element of $B$ contained in the interval $[\emptyset, Y_k \vee \bigvee_{i=1}^{k-1} X_i]$. As $Y_k$ contains $A_k$, we know that $X_k$ is contained in the interval $[\emptyset, Y_k \vee \bigvee_{i=1}^{k-1} X_i]$. Let $W$ be the inclusion maximal element of $B$ such that $X_k \leq W \leq Y_k \vee \bigvee_{i=1}^{k-1} X_i$. As $X_k$ and $Y_k$ are incomparable, we know that $W\neq Y_k$. However $A_k \subseteq W\cap Y_k$, which contradicts the second part of \Cref{lem:dim-nested-set}.
    \end{enumerate}
     
\end{proof}
\begin{remark}
Although the NL-order is combinatorially defined, it does have some geometric content when viewed through the lens of the Bergman fan.  Let $N$ be a maximal nested set and $<$ a linear extension of the associated forest poset.  Let $X^<_i$ be an ordering of the flats of $N$ according to $<$, and let $Y^<_k = \vee_{i=1}^kX^<_i$ so that the $\{Y^<_k\}$ gives a maximal chain of flats.  The chamber associated to $N$ in the Bergman fan induced by the building set $B$ is triangulated by the cones associated to the chain $\{Y^<_k\}$ as $<$ varies over all linear extensions of the poset.
We can reinterpret the NL-labeling of $N$ as the lexicographically minimal EL-labeling of the chains $\{Y^<_k\}$ as we vary over all such linear extensions $<$.  This has some consequences.  For example, this immediately gives an a geometric proof of Lemma \ref{lem:nl_unique}:  since we know that the EL-labeling for the order complex is injective, and each chamber of the fine Bergman fan is contained in a unique chamber of the Bergman fan induced by $B$, it follows from the above remarks that the NL-order is also injective.
\end{remark}

We say that the NL-labeling $m(N)= (m_N(X_1),m_N(X_2),\ldots,m_N(X_k))$ of a nested set $N$ has a \Dfn{descent} at $X_i$ if $m_N(X_i)>m_N(X_{i+1})$. We declare the NL-labeling $m(N)$ \Dfn{increasing} if $m(N)$ has no descents. The following proposition is an analogue of one of the key properties of EL-labelings \cite[Lemma 7.6.2]{bjorner}.

\begin{proposition}\label{prop:new_NL_min}
There exists a unique inclusion-maximal nested set $N_{\min}$ such that $m(N_{\min})$ is increasing. Furthermore, $N_{\min}$ is the minimum nested set of the NL-ordering.
\end{proposition}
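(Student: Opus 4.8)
The plan is to reduce the statement to the corresponding classical fact about Bj\"orner's EL-labeling of $\cL$, using the reinterpretation of the NL-labeling recorded in the remark following Lemma~\ref{lem:nl_unique}. Recall from that remark that, for a maximal nested set $N$, the word $m(N)$ is the lexicographically smallest EL-label sequence $\lambda(\mathcal C)$ among all maximal chains $\mathcal C$ of $\cL$ that \emph{refine} $N$ (i.e.\ arise as the partial joins $Y^{<}_k=\bigvee_{i\le k}X^{<}_i$ for some linear extension $<$ of the forest poset of $N$), and recall that, conversely, every maximal chain of $\cL$ refines a unique maximal nested set of $(\cL,B)$ --- this is the combinatorial shadow of the fact that each maximal cone of the fine Bergman fan lies in a unique maximal cone of $\Sigma_{\cL,B}$.

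First I would fix the unique maximal chain $\mathcal C^{*}$ of $\cL=[\hat 0,\hat 1]$ whose EL-label sequence $\lambda(\mathcal C^{*})$ is strictly increasing; this exists and is unique because Bj\"orner's labeling is an EL-labeling (Definition~\ref{def:el}), and moreover $\lambda(\mathcal C^{*})$ is lexicographically smallest among the label sequences of \emph{all} maximal chains of $\cL$. Let $N_{\min}$ be the unique maximal nested set refined by $\mathcal C^{*}$. On the one hand $m(N_{\min})\le_{\mathrm{lex}}\lambda(\mathcal C^{*})$, since $\mathcal C^{*}$ is among the chains refining $N_{\min}$; on the other hand $\lambda(\mathcal C^{*})\le_{\mathrm{lex}}m(N)$ for \emph{every} maximal nested set $N$, since $m(N)$ equals $\lambda(\mathcal C)$ for some maximal chain $\mathcal C$ of $\cL$ and $\lambda(\mathcal C^{*})$ is globally minimal. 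Taking $N=N_{\min}$ in the second inequality gives $m(N_{\min})=\lambda(\mathcal C^{*})$, which is increasing; hence an increasing NL-labeling exists.

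For minimality and uniqueness I would argue uniformly. For any maximal nested set $N$ we have just shown $m(N_{\min})=\lambda(\mathcal C^{*})\le_{\mathrm{lex}}m(N)$. If equality holds, then $m(N)$ equals the increasing word $\lambda(\mathcal C^{*})$; but $m(N)=\lambda(\mathcal C)$ for some maximal chain $\mathcal C$ refining $N$, and $\lambda(\mathcal C)$ increasing forces $\mathcal C=\mathcal C^{*}$ by the uniqueness of the increasing maximal chain of $[\hat 0,\hat 1]$, whence $N=N_{\min}$ because $\mathcal C^{*}$ refines only $N_{\min}$. Therefore $m(N_{\min})<_{\mathrm{lex}}m(N)$ for every $N\ne N_{\min}$, i.e.\ $N_{\min}$ is the $\prec_{NL}$-minimum, and the very same computation shows $N_{\min}$ is the \emph{only} maximal nested set with an increasing NL-labeling.

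The main obstacle is the bridge invoked at the start: making fully rigorous both that each maximal chain of $\cL$ refines a \emph{unique} maximal nested set and that the minimum of $\lambda(\mathcal C)$ over refining chains computes $m(N)$ exactly --- here one must be careful with the plucking convention in the definition of $m(N)$ and with which linear extension realizes a given chain. If one prefers to avoid this bridge, a purely combinatorial alternative is available: construct $N_{\min}$ by feeding the greedily chosen increasing basis $a_1<a_2<\dots<a_r$ (with $a_k=\min\{A\in\cL_1:A\not\le a_1\vee\cdots\vee a_{k-1}\}$) into the reconstruction recipe from the proof of Lemma~\ref{lem:nl_unique}, verify as in that proof that the output is a maximal nested set, check that its flats $X_1,\dots,X_r$ are plucked in this order with $m_{N_{\min}}(X_k)=a_k$ (using Corollary~\ref{cor:disjoint}, Lemma~\ref{lem:incomp}, and the forest structure to control which flats lie below $X_k$), and then run an exchange argument at the first index where $m(N_{\min})$ and $m(N)$ disagree. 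I expect the geometric route above to give the cleaner write-up.
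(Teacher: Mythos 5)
Your primary route is genuinely different from the paper's. The paper proves the proposition directly: it constructs $N_{\min}$ by a greedy choice of atoms $A_k=\min\{A\in\cL_1: A\not\le\bigvee_{i<k}X_i\}$ with $X_k$ the inclusion-maximal element of $B$ in $[A_k,\, A_k\vee\bigvee_{i<k}X_i]$, verifies this is a maximal nested set with $m(N_{\min})=(A_1,\dots,A_r)$, and then, for an arbitrary $N'$, locates the first index where the flat sequences differ and shows by contradiction (using Lemma~\ref{lem:incomp}, Corollary~\ref{cor:disjoint}, and a rank count) both that $m(N_{\min})<_{\mathrm{lex}}m(N')$ and that $m(N')$ has a descent. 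Your reduction to Bj\"orner's EL-theorem via the chain-refinement correspondence is attractive, and once the bridge is in place the remaining logic (the two lexicographic inequalities sandwiching $\lambda(\mathcal C^{*})$, and the uniqueness argument) is correct and clean. Be aware that your fallback is essentially the paper's proof, and the ``exchange argument at the first index of disagreement'' you defer there is precisely its hard part.

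The gap is the bridge, and it is load-bearing. It splits into two facts. (i) For any maximal nested set $N$ and any linear extension $<$ of its forest poset, the partial joins $Y^{<}_k$ form a maximal chain of $\cL$ with $\lambda(Y^{<}_{k-1},Y^{<}_k)=m_N(X^{<}_k)$. This part is genuinely easy: by Lemma~\ref{lem:incomp} the atoms below $Y^{<}_k$ but not below $Y^{<}_{k-1}$ are exactly the atoms below $X^{<}_k$ but not below any earlier $X^{<}_i$; the earlier flats incomparable to $X^{<}_k$ are disjoint from it by Corollary~\ref{cor:disjoint} and impose no condition, so only the elements of $N$ below $X^{<}_k$ matter and the label equals $m_N(X^{<}_k)$. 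The lex-minimum over linear extensions is then computed by exactly the greedy plucking defining $m(N)$ (using injectivity of $m_N$, Lemma~\ref{lem:basic_NL}). (ii) Every maximal chain of $\cL$ arises this way from a \emph{unique} maximal nested set. This is the surjectivity and uniqueness of the refinement correspondence, equivalently that the order-complex triangulation of the Bergman fan refines the nested-set triangulation; the paper asserts it only in the unproven Remark after Lemma~\ref{lem:nl_unique}, and you need both directions (existence, so that $N_{\min}$ is defined from $\mathcal C^{*}$, and uniqueness, for the final step $N=N_{\min}$). Filling (ii) requires importing Feichtner--Sturmfels/Feichtner--M\"uller or giving a direct combinatorial argument (e.g.\ recovering $N$ from $\mathcal C$ as $\bigcup_k\max(B_{\le Y_k})$ and checking this is nested, maximal, and refined by $\mathcal C$). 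Until that is done the proof is incomplete; with it, your argument goes through and is arguably the more conceptual of the two.
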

\begin{proof}
        We first construct the nested set $N_{\min}=\{X_1,\ldots,X_r\}$. 
        Let $A_1$ be the minimum atom of $\cL_1$ and set $X_1=A_1$. Recursively define the atom $A_k$ and the flat $X_k$ by setting
    \[A_k = \min\{A\in \cL_1: A\not\leq \bigvee_{i=1}^{k-1} X_i \}\]
    and $X_k$ to be the unique inclusion-maximal flat of $B$ such that $A_k\leq X_k \leq \left(A_k \vee \bigvee_{i=1}^{k-1} X_i\right)$. Note that this is well construction is well defined. Indeed, for $k \leq r$, 
    \[\rank(\bigvee_{i=1}^{k-1}X_i) = \rank(\bigvee_{i=1}^{k-1}A_i) \leq k-1 < r \]
    and thus the set $\{A\in \cL_1: A\not\leq \bigvee_{i=1}^{k-1} X_i \}$ is always nonempty. 
    Also note that our construction of the $A_k$ ensures that $A_1<A_2<\cdots <A_r$.

    We now show that $N_{\min}$ is an inclusion-maximal nested set. As all every nested set of size $r$ is inclusion-maximal (\Cref{lem:dim-nested-set}), we only need to check that $N_{\min}$ is nested.
    Consider an antichain $\{X_{i_1},\ldots, X_{i_\ell}\}$ of flats of $N_{\min}$ with $i_1<i_2<\ldots< i_\ell$ and $\ell\geq 2$. We will show that $Y= \bigvee_{j=1}^\ell X_{i_j} \not\in B$. By definition, 
    \[A_{i_\ell} \leq Y \leq \bigvee_{j=1}^{i_\ell} X_j \leq A_{i_\ell} \vee \bigvee_{j=1}^{i_{\ell}-1} X_j. \]
    Our construction chose $X_{i_{\ell}}$ to be the inclusion-maximal element of $B$ satisfying this condition. However, we know that $X_{i_\ell} <Y$, thus $Y\not\in B$. 
    From the way that $N_{\min}$ is constructed, we have
    \[m(N_{\min})= (m_{N_{\min}}(X_1), m_{N_{\min}}(X_2), \ldots , m_{N_{\min}}(X_r)) = (A_1,A_2,\ldots, A_k). \]
   This means that $m(N_{\min})$ is increasing, since $A_1 < A_2 < \dots < A_k$.
    
    We now show that $N_{\min}$ is the unique inclusion-maximal nested set whose NL-labeling is increasing and that $N_{\min}$ is the minimum nested set in the NL-order. Consider an inclusion-maximal nested set $N'= \{Y_1,Y_2,\ldots, Y_r\}$ with $N'\neq N_{\min}$ and whose NL-labeling is \[m(N') = (m_{N'}(Y_1), m_{N'}(Y_2), \ldots, m_{N'}(Y_r)).\]
    Since $N' \not= N_{\min}$, \Cref{lem:nl_unique} tells us there is some first place where $m(N)$ and $m(N_{\min})$ differ.
    By the proof of \Cref{lem:nl_unique}, this is also the first place where the sequences of flats differ, i.e., this is the first $k$ for which $X_k \not= Y_k$ in the ordered list of flats (note that we use the notation from the previous part of this claim, where $X_k \in N_{\min}$).
    
    We claim that $m_{N'}(Y_k)> m_{N_{\min}}(X_k)$. 
    For the sake of contradiction, suppose that $m_{N'}(Y_k)< m_{N_{\min}}(X_k)$. Our construction of $A_k$ implies that $m_{N'}(Y_k) \leq \bigvee_{i=1}^{k-1}X_i$. By \Cref{lem:incomp}, there exists an inclusion-maximal flat $Z$ of $\{X_1,\ldots, X_{k-1}\}$ containing $m_{N'}(Y_k)$.  It follows from the definition of $m_{N'}(-)$ that $Z\not< Y_k$. Also, as $m_{N'}(Y_k)\subseteq Y_k\cap Z$, \Cref{cor:disjoint} ensures that $Y_k$ and $Z$ are not incomparable. Thus we must have that $Y_k <Z$. We now show that this leads to a contradiction.

    If $Y_k <Z$, then $Y_k < \bigvee_{i=1}^{k-1} X_i$ and $\{Y_1, Y_2,\ldots, Y_k\}$ is a nested set of
    \[(\cL\vert_{\bigvee_{i=1}^{k-1} X_i},B\vert_{\bigvee_{i=1}^{k-1} X_i})\,.\] However, $\bigvee_{i=1}^{k-1} X_i = \bigvee_{i=1}^{k-1} A_i$ has rank $k-1$. By the first part of \Cref{lem:dim-nested-set}, this means that $\{Y_1, Y_2,\ldots, Y_k\}$ cannot be a nested set of $(\cL\vert_{\bigvee_{i=1}^{k-1} X_i},B\vert_{\bigvee_{i=1}^{k-1} X_i})$. We've thus reached a contradiction and shown that $m_{N'}(Y_k)> m_{N_{\min}}(X_k)$ and that $m(N_{\min})< m(N')$. 
    
    To see that $m(N')$ is not increasing, note that, by the third part of \Cref{lem:dim-nested-set}, there is a unique inclusion minimal flat $Y_j$ of $N'$ which contains $m_{N_{\min}}(X_k)$. We also know that $k< j$ and $m_{N'}(Y_j)\leq m_{N_{\min}}(X_k)< m_{N'}(Y_k)$. Combining these observations, we see $m(N')$ must have a descent at a flat $Y_i$ for some index $i$ with $k \leq i < j$. 
\end{proof}

\begin{example}
    Consider the building set $B=\{1,2,3,4,5,13,123,45\}$ of the Boolean lattice on $5$ elements. 
    The unique inclusion-maximal nested with increasing NL-labeling is $N_{\min} = \{1,2,123,4,45\}$. The NL-labeling of $N_{\min}$ is $m(N_{\min})=(1,2,3,4,5)$.
\end{example}

\subsection{Some Combinatorial Forestry}\label{subsec:forestry}
In this subsection, we describe how taking the link of a flat $Z\in \Delta(\cL,B)$ interacts with the NL-order, as constructed in \Cref{subsec:combo_order}. Given a nested set $N$ containing $Z$, let $\tau_Z(N)$ be the image of $N$ in the link of $Z$. We describe the forest poset structure of the image of $N$ in the link of $Z$ (Proposition \ref{lem:lumberjack}) and also the labeling function $m_{\tau_Z(N)}(-)$ of $\tau_Z(N)$ (\Cref{lem:link_labels}). These descriptions let us conclude that if $m(N)$ has a descent at a flat $X\neq Z$, then  $m(\tau_Z(N))$ has a descent
(\Cref{claim:min-element}). This will be a key fact in the inductive step of our proof of Theorem \ref{thm1}.

The following proposition describes the forest poset structure of $\tau_Z(N)$ in the link of $Z$. Roughly, it states that the forest of $\tau_Z(N)$ is constructed from $N$ by separating the tree that contains $Z$ into two trees. The first tree consists of all elements less than or equal to $Z$ and the second tree consists of (relabelings of) all of the elements in the original tree which are not less than or equal to $Z$. In the case of the maximal building set, this procedure separates a chain $C$ containing $Z$ into the two disjoint chains $C_{\leq Z} = \{X\in C: X\leq Z\}$ and $C_{>Z}= \{Y\setminus Z: Y\in C, Z< Y\}$.

\begin{lemma}\label{lem:partial-lumberjack}
If $Y$ covers $X$ in the forest poset of $N$, then $\tau_Z(Y)$ covers $\tau_Z(X)$ in the forest poset of $\tau_Z(N)$.
\end{lemma}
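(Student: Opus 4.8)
The plan is to prove \Cref{lem:partial-lumberjack} in two stages. First, using the forest property (\Cref{obs:forest}) together with the fact that $Z\in N$, I would show that a covering pair $X\lessdot Y$ in the forest poset of $N$ must have $X$ and $Y$ on the same ``side'' of $Z$. Second, I would show that on each side $\tau_Z$ acts as an order isomorphism onto its image, so in particular sends covers to covers. It is useful to recall that $\tau_Z$ lands in $\cL\vert_Z\times\cL^Z$, whose factors $\cL\vert_Z\times\{\hat 0\}$ and $\{\hat 0\}\times\cL^Z$ meet only at $(\hat 0,\hat 0)$, so a flat of one factor is incomparable to every flat of the other.

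For the dichotomy: suppose $Y$ covers $X$ in the forest poset of $N$, and note $X\neq Z$ since $X$ is a vertex of the link. Then either $Y\le Z$ (so $X<Y\le Z$) or $X\not\le Z$ (so $X,Y\not\le Z$): the only other option is $X\le Z$ with $Y\not\le Z$, but then $X<Z$, and \Cref{obs:forest} applied to $X<Y$ and $X<Z$ forces $Y,Z$ comparable, hence $Z<Y$, putting $Z\in N$ strictly between $X$ and $Y$ and contradicting that $Y$ covers $X$; and $X\not\le Z$, $Y\le Z$ cannot occur since $X<Y\le Z$ forces $X\le Z$. In the case $Y\le Z$ we are done quickly: $\tau_Z$ is the identity on flats below $Z$, so $\tau_Z(X)=(X,\hat 0)$ and $\tau_Z(Y)=(Y,\hat 0)$ lie in $\cL\vert_Z\times\{\hat 0\}$, while every $W\in N$ with $W\not\le Z$ has $\tau_Z(W)$ in $\{\hat 0\}\times\cL^Z$ (since $(W\vee Z)\setminus Z=\hat 0$ would force $W\le Z$); so a flat of $\tau_Z(N)$ strictly between $\tau_Z(X)$ and $\tau_Z(Y)$ would be $(W,\hat 0)$ with $W\in N$, $X<W<Y$, contradicting the cover in $N$.

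The crux is the case $X,Y\not\le Z$. Here the key claim is that $\tau_Z$ restricts to an order isomorphism from $S:=\{W\in N:W\not\le Z\}$ onto $\tau_Z(S)\subseteq\{\hat 0\}\times\cL^Z$. One direction is immediate: $W_1\le W_2$ gives $W_1\vee Z\le W_2\vee Z$, hence $(W_1\vee Z)\setminus Z\le(W_2\vee Z)\setminus Z$. For the reverse, suppose $\tau_Z(W_1)\le\tau_Z(W_2)$, i.e.\ $W_1\vee Z\le W_2\vee Z$, with $W_1,W_2\in S$; I want $W_1\le W_2$. If $W_2<W_1$, the easy direction forces $\tau_Z(W_1)=\tau_Z(W_2)$, contradicting injectivity of $\tau_Z$ (\Cref{thrm:wondertopes}, extended via \Cref{note:maximalTau}). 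If $W_1,W_2$ are incomparable then $W_1\cap W_2=\emptyset$ by \Cref{cor:disjoint}, and $Z,W_2$ are incomparable too (they lie in $N$, $W_2\not\le Z$, and $Z<W_2$ would give $W_2=W_2\vee Z\ge W_1\vee Z\ge W_1$, contradicting incomparability of $W_1,W_2$); so \Cref{lem:incomp} identifies $[\hat 0,W_2\vee Z]$ with $[\hat 0,W_2]\times[\hat 0,Z]$ via joins, under which $W_1\le W_2\vee Z$ together with $W_1\wedge W_2=W_1\cap W_2=\emptyset$ corresponds to the pair $(\emptyset,W_1\wedge Z)$, forcing $W_1=W_1\wedge Z\le Z$ and contradicting $W_1\not\le Z$. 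Hence $W_1\le W_2$, so $\tau_Z\vert_S$ is an order isomorphism. Finally, $Y$ covers $X$ in $N$ with $X,Y\in S\subseteq N$, so $Y$ covers $X$ in the induced subposet $S$ as well (as $S\subseteq N$ contains no element strictly between $X$ and $Y$); an order isomorphism preserves covers, so $\tau_Z(Y)$ covers $\tau_Z(X)$ in $\tau_Z(S)$; and since $\tau_Z(S)\subseteq\{\hat 0\}\times\cL^Z$ while every other flat of $\tau_Z(N)$ lies in $\cL\vert_Z\times\{\hat 0\}$ and is incomparable to $\tau_Z(X),\tau_Z(Y)$, the covering relation persists in the whole forest poset of $\tau_Z(N)$.

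I expect the order-reflecting direction in the last case to be the main obstacle: preserving order on the contracted flats is trivial, but showing $\tau_Z$ is an \emph{isomorphism} onto its image there is exactly where the nestedness of $N$ enters, through the disjointness of incomparable flats in a nested set (\Cref{cor:disjoint}) and the product decomposition of intervals $[\hat 0,W_2\vee Z]$ (\Cref{lem:incomp}); everything else is bookkeeping with the two factors of $\cL\vert_Z\times\cL^Z$.
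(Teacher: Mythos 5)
Your proof is correct, and it is organized differently from the paper's. The paper argues cover-preservation directly by contradiction, splitting into three cases according to whether $X<Z$, $Z<X$, or $X$ and $Z$ are incomparable, and in the latter two cases it exhibits a failure of injectivity in the product decomposition of an interval guaranteed by \Cref{lem:incomp}. You instead first establish the dichotomy that a cover pair of $N\setminus\{Z\}$ lies entirely below $Z$ or entirely in $S=\{W\in N: W\not\le Z\}$ (the paper's Case 1 contains the same observation), and then, for the second alternative, you prove the stronger structural statement that $\tau_Z$ restricted to $S$ is an order isomorphism onto its image, from which cover-preservation is formal. The order-reflecting direction is the real content, and your argument for it is sound: the case $W_2<W_1$ is killed by injectivity of $\tau_Z$ (\Cref{thrm:wondertopes} together with \Cref{note:maximalTau}), and the incomparable case is killed by \Cref{cor:disjoint} plus the product decomposition of $[\hat{0}, W_2\vee Z]$ from \Cref{lem:incomp}, which forces $W_1\le Z$. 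The tools are the same as the paper's, but your packaging buys something: the order-isomorphism on $S$, combined with the trivial identity on $N_{\le Z}$, essentially yields \Cref{lem:lumberjack} (the next result in the paper) at the same time, whereas the paper derives order-reflection from cover-preservation afterwards. The paper's version, by contrast, keeps each case elementary and avoids having to verify that $\tau_Z(W_1)\le\tau_Z(W_2)$ unwinds to $W_1\vee Z\le W_2\vee Z$ in $\cL^Z$. Both are complete proofs.
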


    \begin{proof}
    Suppose that $Y$ is a flat of $N$ covering $X$. It follows from the definition of $\tau_Z$ that $\tau_Z(X)< \tau_Z(Y)$. It now remains to show that there does not exist a flat $W\in N$ such that $\tau_Z(X) < \tau_Z(W) < \tau_Z(Y)$. To do so, we break into three separate cases based on the comparability of $Z$ to $X$.
  \begin{enumerate}
  \item {\sf Case 1}: If $X<Z$, then, as $N$ is a forest poset (Observation \ref{obs:forest}) and $Y$ covers $X$, we know that $Y\leq Z$. In this case, the forest poset of $\tau_Z(N)$ restricted to $\tau_Z(Y)$ is identical to the forest poset of $N$ restricted to $Y$. In particular, there does not exist a flat $W\in N$ such that $\tau_Z(X)< \tau_Z(W) <\tau_Z(Y)$.
  \item {\sf Case 2}: If $Z<X$, suppose that there did exist a flat $W\in N$ such that $\tau_Z(X) < \tau_Z(W) < \tau_Z(Y)$. In this case, $\tau_Z(X)= X\setminus Z$ and $\tau_Z(Y)=Y\setminus Z$. As $\tau_Z(X) \not\leq Z$, we must have that $\tau_Z(W) \not\leq Z$. Thus, $W\not\leq Z$ and $X < W\vee Z < Y$. If $Z\leq W$, then $W\vee Z= W$ and we obtain a contradiction to the fact that $Y$ covers $X$ in $N$. Thus we can conclude that $W$ and $Z$ are incomparable and $W\vee Z \not\in B$.
    Since $B$ is a building set and the fact that $\{W,Z\}$ are nested and incomparable, Lemma \ref{lem:incomp} tells us that the map
    \begin{align*}
        [\emptyset, W] \vee [\emptyset, X] & \to [\emptyset, W \vee X]\\
        (F,G) & \mapsto F \vee G
    \end{align*}
    must be an isomorphism.
    We claim, however, that this map cannot be an isomorphism. In particular, it is not injective. To see this, observe (a) that both $Z$ and $X$ lie in different intervals of the product on the left and (b) that $X\vee W = Z \vee W$. Both $(a)$ and $(b)$ follow directly from the facts that $Z< X$ and that $X< W\vee Z$.
  \item {\sf Case 3}:  If $Z$ and $X$ are incomparable, we proceed in a similar fashion to the previous case.

Suppose for contradiction that we had a flat $W \in N$ such that 
\[\tau_Z(X) < \tau_Z(W) < \tau_Z(Y)\,.\]
In order to derive a contradiction, we want to show that $X,W,$ and $Z$ are pairwise incomparable in $\cL$ and then derive a contradiciton using the definition of a nested set.
We already know that $X$ and $Z$ are incomparable. 
Now we just need to show that $W$ is incomparable to both $X$ and $Z$.
To that end, we first collect two useful observations:
\begin{itemize}
\item Corollary \ref{cor:disjoint} tells us that $X = \tau_Z(X)$, so the preceeding sequence of inequalities reads
\[X < \tau_Z(W) < \tau_Z(Y)\,.\]
This means that $\tau_Z(W)$ certainly cannot sit in $\cL|_Z$.
That is, either $Z<W$ or $Z$ and $W$ are incomparable.
\item Since $\tau_Z(X)< \tau_Z(W)$, this means that
\[
X\vee Z < W\vee Z\,,
\]
in the original lattice $\cL$.
\end{itemize}

\noindent Now we are ready to check that $W$ is incomparable to both $X$ and $Z$.

Let's start by comparing $W$ and $X$.
If $W$ sits below $X$, then the same holds when we take the wedge with $Z$, i.e.,
\[
W < X \qquad\Rightarrow\qquad W \vee Z < X \vee Z\,,
\]
contradicting our second observation.
On the other hand, if $W$ sits above $X$, then Observation \ref{obs:forest} implies that $W$ also sits above $Y$ and we have 
\[
Y < W \qquad\Rightarrow\qquad Y \vee Z \leq W \vee Z \qquad\Rightarrow\qquad \tau_Z(Y) \leq \tau_Z(W)\,,
\]
contradicting our assumption that $\tau_Z(W) < \tau_Z(Y)$.

Now let us compare $W$ and $Z$.
Our first observation tells us that $W$ does not sit below $Z$, so we just need to check that $W$ cannot sit above $Z$.
So suppose $Z \leq W$.
Then $W \vee Z = W$ and we have 
\[
X \vee Z < W \qquad \Rightarrow \qquad X < W\,,
\]
contradicting the fact that $X$ and $W$ are incomparable.

We've now shown that $\{X,W,Z\}$ are pairwise incomparable, so we are ready to derive our contradiction using the definition of building sets and the fact that $N$ is nested.

Since $B$ is a building set, \cite[Proposition 2.8.2]{FK} tells us that the map
 \begin{align*}
    \prod_{V \in \{W,X,Z\}} [\emptyset, V] & \to [\emptyset, W\vee X\vee Z]\\
    (F,G,H) & \mapsto F \vee G \vee H
\end{align*}
    must be an isomorphism.
    We claim that this map cannot be an isomorphism.
    However, we have 
    \[X\vee W \vee Z = W\vee Z = \emptyset \vee W \vee Z\,,\]
    since $X\vee Z < W\vee Z$.
    Thus this map is not injective and thus cannot be an isomorphism.
  \end{enumerate}
    \end{proof}

\begin{proposition} \label{lem:lumberjack}
Let $X,Y\in N\setminus \{Z\}$.
We have $X < Y$ in $\cL$ if and only if $\tau_Z(X) < \tau_Z(Y)$.
\end{proposition}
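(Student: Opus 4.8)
The plan is to deduce the full order comparison from the cover-by-cover statement of Lemma \ref{lem:partial-lumberjack}, supplemented by a product-decomposition argument via Lemma \ref{lem:incomp}. For the forward implication, suppose $X < Y$ in $\cL$ with $X, Y \in N \setminus \{Z\}$. By Observation \ref{obs:forest} the flats of $N$ weakly above $X$ form a chain, so there is a saturated chain $X = W_0 < W_1 < \cdots < W_m = Y$ of covering relations in the forest poset of $N$. If no $W_i$ equals $Z$ (equivalently, $Z$ does not lie strictly between $X$ and $Y$), then iterating Lemma \ref{lem:partial-lumberjack} across these covers shows that $\tau_Z(W_{i+1})$ covers $\tau_Z(W_i)$ for each $i$, so $\tau_Z(X) < \tau_Z(Y)$ by transitivity. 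The remaining case, in which some $W_i$ equals $Z$ so that $X < Z < Y$, is exactly the situation where $\tau_Z$ sends $X$ into the $\cL\vert_Z$-factor and $Y$ into the $\cL^Z$-factor of $\cL\vert_Z \times \cL^Z$; this is the configuration I would scrutinize most carefully, tracking the two factors explicitly and using that vertices of the nested set complex of $\cL\vert_Z \times \cL^Z$ lying in different factors are never comparable there.

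For the converse, suppose $\tau_Z(X) < \tau_Z(Y)$. Since $\tau_Z$ is injective (Theorem \ref{thrm:wondertopes}) we cannot have $X = Y$, and if $Y < X$ in $\cL$ then the forward implication would give $\tau_Z(Y) < \tau_Z(X)$, which is impossible in the poset $\cL\vert_Z \times \cL^Z$. So it suffices to rule out $X$ and $Y$ being incomparable in $\cL$; that is, I would show that incomparable flats of $N$ have incomparable $\tau_Z$-images. I would argue by cases on how $Z$ sits relative to $X$ and $Y$ in the forest of $N$: since $N$ is a forest, $Z$ cannot lie below both $X$ and $Y$; Corollary \ref{cor:disjoint} gives $X \cap Y = \emptyset$; and Corollary \ref{cor:incomp2} shows that $\tau_Z$ acts as the identity on a flat that is below, or incomparable to, $Z$. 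In each case a hypothetical comparability $\tau_Z(X) \le \tau_Z(Y)$ translates into a comparability $X \vee Z \le Y \vee Z$ inside $[\,\emptyset, Y \vee Z\,]$, which by Lemma \ref{lem:incomp} factors as $[\emptyset, Y] \times [\emptyset, Z]$; decomposing $X$ across this product and invoking $X \cap Y = \emptyset$ (or $X \cap Z = \emptyset$ when $X$ and $Z$ are incomparable) forces $X \le Y$, contradicting incomparability.

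The main obstacle I anticipate is the case bookkeeping: for each relative position of $X$, $Y$, and $Z$ in the forest of $N$ one must identify which factor of $\cL\vert_Z \times \cL^Z$ the images $\tau_Z(X)$ and $\tau_Z(Y)$ land in, and then transport comparisons between the two lattices via the product decomposition of Lemma \ref{lem:incomp}. Within this, the most delicate configuration is $X < Z < Y$ in the forward direction, where $\tau_Z(X)$ lands in the restriction-factor and $\tau_Z(Y)$ in the contraction-factor; understanding precisely how the statement behaves in that configuration (and, correspondingly, the exact scope of the claim) is the crux of the argument.
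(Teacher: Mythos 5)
Your ``crux'' case is not something that further scrutiny will resolve: the configuration $X < Z < Y$ is a genuine counterexample to the proposition as literally stated, and your own observation --- that $\tau_Z(X)$ lands in the $\cL\vert_Z$-factor while $\tau_Z(Y)$ lands in the $\cL^Z$-factor, and elements of different factors of $\cL\vert_Z\times\cL^Z$ are never comparable --- already proves it. Concretely, for the Boolean matroid on $\{1,2,3,4\}$ with the maximal building set, take $N=\{1,12,123,1234\}$, $Z=12$, $X=1$, $Y=123$: then $X<Y$ in $\cL$, but $\tau_Z(X)=(1,\emptyset)$ and $\tau_Z(Y)=(\emptyset,3)$ are incomparable. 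The paper's own proof dismisses the forward implication as following ``directly from the definition of $\tau_Z$'' and overlooks exactly this case; the correct statement carries the extra hypothesis that $Z$ does not lie strictly between $X$ and $Y$ (equivalently, that $X<Z$ and $Z<Y$ do not both hold). This costs nothing where the proposition is used: in \Cref{claim:min-element} the pair is a cover in the forest poset of $N$, so $Z$ cannot lie strictly between, and in \Cref{lem:link_labels} the condition $W\not\leq Z$ is imposed explicitly. With that hypothesis added, your forward argument --- walk up the saturated chain from $X$ to $Y$ in the forest poset of $N$, which avoids $Z$, and iterate \Cref{lem:partial-lumberjack} --- is complete.

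For the converse your route differs from the paper's. The paper argues that if $X\in\max(B)$ then $\tau_Z(X)$ is maximal (Note \ref{note:maximalTau}), and otherwise $X$ has a cover $W$ in $N$ whose image covers $\tau_Z(X)$ by \Cref{lem:partial-lumberjack}; the forest structure of $\tau_Z(N)$ then forces anything above $\tau_Z(X)$ to come from something above $X$. You instead rule out incomparability of $X$ and $Y$ head-on: the case $Z<X$, $Z<Y$ is excluded by \Cref{obs:forest}, the mixed case puts the images in different factors, and in the remaining cases a comparability $X\vee Z\leq Y\vee Z$ combined with the decomposition of $[\emptyset,Y\vee Z]$ from \Cref{lem:incomp} and the disjointness from \Cref{cor:disjoint} forces a component of $X$ to vanish, giving a contradiction. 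I checked the cases and this works; it is more computational than the paper's argument but avoids relying on the forest structure of the image. One small point: when you rule out $Y<X$ by appealing to ``the forward implication,'' you must separately handle $Y<Z<X$, where the forward implication does not apply --- but there the images again lie in different factors, so $\tau_Z(X)<\tau_Z(Y)$ is still impossible.
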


\begin{proof}
The forward direction follows directly from the definition of $\tau_Z$, so the only interesting content is the reverse direction.
Suppose for contradiction that $\tau_Z(X) < \tau_Z(Y)$, but $X$ is not less than $Y$.
We consider two cases, depending on whether or not $X \in \max(B)$.
\begin{enumerate}
    \item {\sf Case 1}. If $X\in \max(B)$, then Note \ref{note:maximalTau} tells us that $\tau_Z(X)$ is an inclusion maximal element of $B\vert_X \sqcup B^X$ and $\tau_Z(Y)$ was not larger than $\tau_Z(X)$ to begin with. 
    \item {\sf Case 2}. If $X\not\in \max(B)$, then there exists some flat $W\in N$ which covers $X$ in the forest poset of $N$. By Lemma \ref{lem:partial-lumberjack}, $\tau_Z(W)$ covers $\tau_Z(X)$ in the forest poset of $\tau_Z(N)$. But $\tau_Z(N)$ is a forest poset (Observation \ref{obs:forest}), so we cannot have that $\tau_Z(Y)$ also covers $\tau_Z(X)$.
\end{enumerate}
\end{proof}

The following lemma describes the labeling function $m_{\tau_Z(N)}(-)$ in terms of the labeling function $m_N(-)$. 

\begin{lemma}\label{lem:link_labels}
Let $X$ be a flat of $N$. The NL-labeling of $\tau_Z(X)$ in $\tau_Z(N)$ is 
    \[m_{\tau_Z(N)}(\tau (X))= \begin{cases}
    m_N(X), \text{ if $X\leq Z$}\\
    (m_N(X) \vee Z)\setminus Z, \text{ else.}
    \end{cases} \]
\end{lemma}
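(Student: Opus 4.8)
The plan is to peel back the definition of the NL-labeling on both sides of the claimed equality and set up an order-preserving correspondence between the atoms of $\cL$ that can \emph{certify} the value $m_N(X)$ and the atoms of $\cL\vert_Z\times\cL^Z$ that can certify $m_{\tau_Z(N)}(\tau_Z(X))$; since each $m$-value is by definition the \emph{smallest} certifying atom, the formula follows once the correspondence is exhibited and shown to respect the atom orders. Throughout, write $a:=m_N(X)$, the smallest atom $A$ of $\cL$ with $A\le X$ and $A\not\le Y$ for all $Y\in N$ with $Y<X$. Two preparatory observations do most of the bookkeeping. First, the atoms of $\cL\vert_Z\times\cL^Z$ lying below a flat of the restriction factor are exactly the $(A,\emptyset)$ with $A$ an atom of $\cL$ below $Z$, and those below a flat of the contraction factor are exactly the $\bar A:=(A\vee Z)\setminus Z$ with $A$ an atom of $\cL$ \emph{not} below $Z$. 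Second, using \Cref{lem:lumberjack} (a flat of $N$ maps below $\tau_Z(X)$ only if it already lay below $X$) together with the product structure of $\cL\vert_Z\times\cL^Z$, the set $\{W\in N: \tau_Z(W)<\tau_Z(X)\}$ equals $\{W\in N: W<X\}$ when $X\le Z$, and $\{W\in N: W<X,\ W\not\le Z\}$ when $X\not\le Z$.

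When $X\le Z$ the statement is almost immediate. Here $\tau_Z(X)=X$ sits in the restriction factor, $a=m_N(X)\le X\le Z$, and every $Y\in N$ with $Y<X$ automatically satisfies $Y\le Z$; so $A\mapsto (A,\emptyset)$ identifies the certifying atoms for $m_N(X)$ with those for $m_{\tau_Z(N)}(\tau_Z(X))$, and this identification does not change the ground-set minimum, hence preserves the atom order. Therefore $m_{\tau_Z(N)}(\tau_Z(X))=m_N(X)$. (The borderline case $X=Z$ is handled identically, reading $\tau_Z(Z)$ as the top of $\cL\vert_Z$ via \Cref{note:maximalTau}.)

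Now suppose $X\not\le Z$. I would first check $a\not\le Z$: if $Z<X$ this is part of the defining condition of $m_N(X)$, since $Z\in N$; if $X$ and $Z$ are incomparable then $X\cap Z=\emptyset$ by \Cref{cor:disjoint}, so no atom below $X$ lies below $Z$. Thus $\bar a=(a\vee Z)\setminus Z$ is a genuine atom of $\cL^Z$, and the goal is $m_{\tau_Z(N)}(\tau_Z(X))=\bar a$. That $\bar a\le\tau_Z(X)$ is clear from $a\le X$. For a $W\in N$ with $\tau_Z(W)<\tau_Z(X)$ — so $W<X$ and $W\not\le Z$ — I must show $\bar a\not\le\tau_Z(W)$, i.e.\ $a\not\le W\vee Z$: working in the forest poset of $N$ (\Cref{obs:forest}), either $Z<X$ and then $Z<W$, so $W\vee Z=W$ and $a\not\le W$ is already known; or $W$ and $Z$ are incomparable and then $W\vee Z=W\sqcup Z$ by \Cref{cor:disjoint} and \Cref{cor:incomp2}, so $a\le W\vee Z$ together with $a\cap Z=\emptyset$ would force the contradiction $a\le W$. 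Running this same computation for an arbitrary certifying atom $A$ of $m_N(X)$ shows $A\mapsto\bar A$ carries certifying atoms for $m_N(X)$ to certifying atoms for $m_{\tau_Z(N)}(\tau_Z(X))$; in particular $\bar a$ certifies $\tau_Z(X)$.

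The one genuinely delicate point — and the step I expect to be the main obstacle — is minimality, because contracting $Z$ can collapse several atoms of $\cL$ into one parallel class of $M^Z$, so the atom ordering on $\cL^Z$ need not be induced from that of $\cL$ and the ground-set minimum of a class can strictly drop; in particular one only gets $\min\bar a\le\min a$, not equality. To conclude, I would take any atom $\bar b$ of $\cL^Z$ certifying $\tau_Z(X)$ and lift it to $b':=\overline{\{\min\bar b\}}$, an atom of $\cL$, and check directly that $b'\not\le Z$, that $(b'\vee Z)\setminus Z=\bar b$, and that $\min b'=\min\bar b$. From $\bar b\le\tau_Z(X)$ one gets $b'\le X\vee Z$, hence $b'\le X$ (immediate when $Z<X$; via \Cref{lem:incomp}/\Cref{cor:incomp2} and $b'\not\le Z$ when $Z$ and $X$ are incomparable). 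And for each $Y\in N$ with $Y<X$ one has $b'\not\le Y$: automatic from $b'\not\le Z$ when $Y\le Z$, and otherwise $\tau_Z(Y)<\tau_Z(X)$, so $\bar b\not\le\tau_Z(Y)$, which gives $b'\not\le Y\vee Z$ and hence $b'\not\le Y$, exactly as in the previous paragraph. Thus $b'$ certifies $m_N(X)$, so $\min\bar b=\min b'\ge\min a\ge\min\bar a$; since the atom order on $\cL^Z$ is total, $\bar a$ is the smallest certifying atom for $\tau_Z(X)$, i.e.\ $m_{\tau_Z(N)}(\tau_Z(X))=\bar a=(m_N(X)\vee Z)\setminus Z$, completing the proof.
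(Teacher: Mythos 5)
Your proof is correct and follows essentially the same route as the paper's: split on whether $X\le Z$, describe the atoms of $\cL\vert_Z\times\cL^Z$ below $\tau_Z(X)$ and below its children via \Cref{lem:incomp} and \Cref{lem:lumberjack}, and compare minima; in fact you treat the minimality step more carefully than the paper, which simply asserts that $(m_N(X)\vee Z)\setminus Z$ is smallest, whereas you correctly observe that contracting $Z$ can merge parallel classes and shrink the ground-set minimum, and you close that gap with the lift $b'=\overline{\{\min\bar b\}}$. One phrasing slip: in the step showing $a\not\le W\vee Z$, the claim ``$Z<X$ and then $Z<W$'' is not an implication ($W$ and $Z$ could be incomparable children of $X$), but since your second branch handles the incomparable case and $W\le Z$ is excluded by hypothesis, the two branches are exhaustive and the argument stands.
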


\noindent Note that the second case really is well-defined: $(m_N(X) \vee Z)\setminus Z$ is an atom of $\cL|_Z \times \cL^Z$, since it is an atom of $\cL^Z$.

\begin{proof}
We consider two cases, based on whether or not $X\leq Z$.
\begin{enumerate}
    \item {\sf Case 1}: Suppose that $X\leq Z$.
    Here the forest poset of $\tau_Z(N)$ restricted to $\tau_Z(X)$ is identical to the forest poset of $N$ restricted to $X$, so we are done.
    \item {\sf Case 2}: Suppose $X$ is not less than $Z$.
    The main difficulty of this case is to describe the atoms that sit below $\tau_Z(X)$.
    We handle this in the following sub-claim.

    \medskip

    \noindent {\bf Sub-Claim}. The set of atoms of $\cL\vert_Z\times \cL^Z$ contained in $\tau_Z(X)= (X\vee Z)\setminus Z$ is the set
  \[\{(A \vee Z) \setminus Z : A \leq X\}. \] 
  \begin{proof}[Proof of Sub-Claim]
      There are two cases to consider: either $Z < X$ or $X$ and $Z$ are incomparable.
      \begin{enumerate}
      \item {\sf Sub-case 1}: Let $Z<X$.
      Then $A \vee Z \leq X\vee Z = X$ if and only if $A \leq X$, so the statement is
      true.
      \item {\sf Sub-case 2}: Suppose $Z$ and $X$ are incomparable.
      From Lemma \ref{lem:incomp}, the map
      \begin{align*}
          [\emptyset, X]\times [\emptyset, Z] & \to [\emptyset, X\vee Z]\\
          (F,G) & \mapsto F\vee G
      \end{align*}
      is an isomorphism.
      In particular, if $A$ is an atom of $\cL$ with $A\not\leq Z$, then $A\vee Z \leq X\vee Z$ if and only if $A \leq X$.
  \end{enumerate}
  \end{proof}

  \noindent Armed with this sub-claim, we are now ready to prove {\sf Case 2}.
  By \Cref{lem:lumberjack}, a flat $\tau_Z(W)$ of $\tau_Z(N)$ is a child of  $\tau_Z(X)$ if and only if $W < X$ and $W\not\leq Z$.
  Together with our sub-claim, this implies that the set of atoms of $\cL\vert_Z \times \cL^Z$ in $\tau_Z(X)$ but not in any of its children is equal to
  \[\{(A\vee Z)\setminus Z: A \leq X, A\not\leq W \text{ for all } W< X\}. \]
Of these atoms, $(m_N(X)\vee Z)\setminus Z$ is the smallest.
\end{enumerate}
\end{proof}

The following claim tells us that descents in the NL-labeling are preserved by taking links of certain vertices.

\begin{claim}\label{claim:min-element}
Let $N$ be a nested set with whose NL-labeling has a descent at a flat $X\neq Z$. Then the NL-labeling of $\tau_Z(N)$ has a descent.
\end{claim}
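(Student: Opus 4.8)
The statement to prove is Claim~\ref{claim:min-element}: if $N$ is a nested set whose NL-labeling has a descent at a flat $X \neq Z$, then $m(\tau_Z(N))$ also has a descent. The natural strategy is to track exactly what the NL-labeling of the image $\tau_Z(N)$ looks like, using the two structural results already established---Proposition~\ref{lem:lumberjack}, which says $\tau_Z$ preserves the forest-poset order on $N \setminus \{Z\}$, and Lemma~\ref{lem:link_labels}, which computes $m_{\tau_Z(N)}(\tau_Z(X))$ from $m_N(X)$ by the rule ``leave it alone if $X \le Z$, else replace the atom $A = m_N(X)$ by $(A\vee Z)\setminus Z$.'' Together these say that $\tau_Z$ is, on the nested set $N$, essentially an order-preserving relabeling of a forest, and a descent is a purely local (parent/child or sibling-adjacency) phenomenon in the listing order~\eqref{eq:internal_order}. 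So the plan is: locate the two flats $X_i, X_{i+1}$ realizing the descent in the internal order on $N$, understand where their images land in the internal order on $\tau_Z(N)$, and show a descent survives somewhere.

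\textbf{Step 1: reduce to a descent between a parent and its child, or between two leaves.} Recall $X_i = \min{}_*(N \setminus \{X_1, \dots, X_{i-1}\})$, so $X_i$ is an inclusion-minimal element (a leaf) of the forest poset of $N$ restricted to $\{X_i, \dots, X_k\}$. I would first argue that a descent $m_N(X_i) > m_N(X_{i+1})$ implies a descent between two flats that are \emph{comparable in the forest of $N$ or are siblings sharing a parent}, by the standard EL-style argument: among all flats of $N$ with label strictly less than $m_N(X_i)$, consider the one, $X_{i+1}$, that gets plucked next; it must be a leaf of $N\setminus\{X_1,\dots,X_i\}$, hence either it was already a leaf of $N$ (so it and $X_i$ are incomparable in $\cL$, hence by Corollary~\ref{cor:disjoint} disjoint) or it became a leaf precisely because $X_i$ was its only remaining child, i.e.\ $X_i < X_{i+1}$ covers in the forest.

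\textbf{Step 2: analyze the two cases under $\tau_Z$.} In the covering case $X_i \lessdot X_{i+1}$ in the forest of $N$: Proposition~\ref{lem:lumberjack} gives $\tau_Z(X_i) < \tau_Z(X_{i+1})$ and in fact (via Lemma~\ref{lem:partial-lumberjack}) a covering in $\tau_Z(N)$. Now I compare labels using Lemma~\ref{lem:link_labels}. If neither $X_i$ nor $X_{i+1}$ lies below $Z$, the labels transform by the common order-preserving map $A \mapsto (A\vee Z)\setminus Z$ on atoms of $\cL^Z$ (this map is order-preserving on atoms because it is the restriction of the lattice isomorphism of Lemma~\ref{lem:incomp}/Corollary~\ref{cor:incomp2}, and atoms of $\cL^Z$ are ordered by their minima, which this map respects), so the strict inequality $m_N(X_i) > m_N(X_{i+1})$ is preserved and $\tau_Z(X_i)$ is a child of $\tau_Z(X_{i+1})$ with larger label---forcing a descent somewhere along the path from that child up, since when $\tau_Z(X_{i+1})$ is eventually plucked all of its children (including $\tau_Z(X_i)$, or whatever leaf replaced it) have been plucked first, and a leaf with label exceeding $m_{\tau_Z(N)}(\tau_Z(X_{i+1}))$ got recorded earlier. (This last sub-step is the same ``a bad parent forces a descent'' lemma one uses for EL-labelings; I would phrase it as: if some flat $W$ of a nested set has a descendant leaf $L$ with $m(L) > m(W)$, then the NL-labeling has a descent, proved by looking at the first moment a flat with label $> m(W)$ is plucked.) If one of $X_i, X_{i+1}$ is $\le Z$: since $X_i < X_{i+1}$, if $X_{i+1} \le Z$ then $X_i \le Z$ too and both labels are unchanged, descent persists verbatim; the remaining possibility is $X_i \le Z < $ something, but $X_i < X_{i+1}$ with $X_{i+1} \not\le Z$ forces, by Observation~\ref{obs:forest} applied in $N$, that $Z$ is comparable to $X_{i+1}$... here I would do a short case check, but the upshot is that the label of the child can only \emph{decrease} (it's $m_N(X_i)$ if $X_i \le Z$) while the parent's label is some atom not below $Z$, and one argues the descent relative to the next leaf plucked. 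In the incomparable-leaves case $X_i \cap X_{i+1} = \emptyset$: then $\tau_Z$ keeps them incomparable in $\tau_Z(N)$ (Proposition~\ref{lem:lumberjack}), they remain leaves, and their labels transform monotonically, so $\tau_Z(X_i), \tau_Z(X_{i+1})$ are two leaves of $\tau_Z(N)$ with $m_{\tau_Z(N)}(\tau_Z(X_i)) > m_{\tau_Z(N)}(\tau_Z(X_{i+1}))$; hence in the internal order on $\tau_Z(N)$ the smaller-labeled leaf is plucked first, and since leaves of $N$ not below $Z$ remain inclusion-minimal in $\tau_Z(N)$, this yields $\tau_Z(X_{i+1})$ listed before $\tau_Z(X_i)$ and a descent is forced by the intervening flats (again the ``bad ancestor'' lemma, or directly).

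\textbf{Expected main obstacle.} The delicate point is \emph{not} that $\tau_Z$ preserves one particular adjacent descent---in general the internal listing order~\eqref{eq:internal_order} of $\tau_Z(N)$ is a genuine reshuffle of that of $N$ (this is exactly the phenomenon flagged in Remark~\ref{recursiveELremark} and Example~\ref{localtauexample}: $\tau_Z$ does not preserve the NL-order), so one cannot simply say ``the $i$-th descent maps to a descent.'' The real content is the pluck-invariant statement that \emph{any} forest-labeled nested set which is not increasing (i.e.\ has some flat with a strictly-smaller-labeled element that is not one of its descendants sitting ``later'') has a descent in its listing---equivalently, the contrapositive of the uniqueness/minimality Proposition~\ref{prop:new_NL_min}. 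So the cleanest route, and the one I'd actually write, is: show that $\tau_Z(N)$ is \emph{not} the increasing nested set $N_{\min}$ of $(\cL|_Z \times \cL^Z, B|_Z \cup B^Z)$---because a descent in $m(N)$ at $X \neq Z$ exhibits, via Lemma~\ref{lem:link_labels} and Proposition~\ref{lem:lumberjack}, a pair of flats in $\tau_Z(N)$ (the images of the two flats realizing the descent) whose forest relationship and label comparison are incompatible with being part of an increasing labeling---and then invoke Proposition~\ref{prop:new_NL_min}, which says the \emph{only} inclusion-maximal nested set with increasing NL-labeling is $N_{\min}$, to conclude $m(\tau_Z(N))$ has a descent. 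The work then collapses to verifying that ``$m_N$ descends at $X$, $X \ne Z$'' transfers to ``$m_{\tau_Z(N)}$ is non-increasing,'' which is the case analysis sketched in Step~2, and the only genuinely fiddly case is when exactly one of the two flats realizing the descent is weakly below $Z$.
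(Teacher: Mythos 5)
Your proposal follows essentially the same route as the paper's proof: identify the two flats $X$ and $Y$ realizing the descent, show they form a child--parent cover in the forest of $N$, use Proposition~\ref{lem:lumberjack} to conclude $\tau_Z(X)<\tau_Z(Y)$ so that $\tau_Z(X)$ is plucked before $\tau_Z(Y)$, use Lemma~\ref{lem:link_labels} to see that the label inequality survives, and conclude that a descent must occur somewhere between those two positions. The detour through Proposition~\ref{prop:new_NL_min} is unnecessary (once an earlier entry of the sequence exceeds a later one, a descent in between is immediate), but it is not wrong.

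One point deserves correction. Your Step~1 keeps alive the case that $X$ and $Y$ are incomparable leaves, but this case is vacuous under the descent hypothesis: if $Y$ had already been a leaf of $N\setminus\{X_1,\dots,X_{i-1}\}$ at the moment $X$ was plucked, the algorithm would have plucked $Y$ first, since $m_N(Y)<m_N(X)$. This is exactly the observation the paper uses to force $X\lessdot Y$. Moreover, your analysis of that spurious case is itself incorrect as written: two incomparable leaves whose images carry labels $m_{\tau_Z(N)}(\tau_Z(X))>m_{\tau_Z(N)}(\tau_Z(Y))$ do \emph{not} force a descent --- the smaller-labeled leaf is simply plucked earlier, which is consistent with an increasing labeling. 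Because the case cannot occur, this does not invalidate your argument, but the proof as written should either eliminate the case or not claim a descent from it. Finally, be careful with the assertion that $A\mapsto (A\vee Z)\setminus Z$ is order-preserving on atoms: as a map on individual atoms it need not be (contracting $Z$ can merge a small element into a large atom), and the correct statement, which does follow from the sub-claim in the proof of Lemma~\ref{lem:link_labels}, is that $\min m_{\tau_Z(N)}(\tau_Z(X))=\min m_N(X)$ whenever $X\not\leq Z$; this is what actually preserves the descent.
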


\begin{proof}
Let $Y$ be the flat of $N$ such that $m_N(Y)$ immediately follows $m_N(X)$ in the NL-labeling of $N$. First, observe that $Y$ must cover $X$ in the forest poset of $N$. As $m_N(X)>m_N(Y)$, if at the time of picking $X$, we were able to pick $Y$, we would have. Thus $X$ must be a child of $Y$. Since $m_N(X)$ and $m_N(Y)$ are consecutive in the NL-labeling of $N$, there are no flats $W\in N$ such that $X<W<Y$.

By Proposition \ref{lem:lumberjack}, we have $\tau_Z(X) < \tau_Z(Y)$ and we have to pick $\tau_Z(X)$ before picking $\tau_Z(Y)$ when we construct $m(\tau_Z(N))$.
This means that $m_{\tau_Z(N)}(\tau_Z(X))$ precedes $m_{\tau_Z(N)}(\tau_Z(Y))$ in $m(\tau_Z(N))$.
By Lemma \ref{lem:link_labels} \[m_{\tau_Z(N)} (\tau_Z( X)) > m_{\tau_Z(N)} (\tau_Z(Y))\,.\] Thus there exists a flat $X'$ of $N$ such that $m(\tau_Z(N))$ has a descent at $\tau_Z(X')$ and $m_{\tau_Z(N)}(\tau_Z(X'))$ lies in between $m_{\tau_Z(N)}(\tau_Z(X)$ and $m_{\tau_Z(N)}(\tau_Z(Y))$.
\end{proof}

In general, we do not expect $m(\tau_Z(N)) = m(N)$, just that descents are preserved.
We illustrate some of the subtleties of this claim in the following example.

\begin{example}\label{localtauexample}
Let $\cL$ be the Boolean lattice on three elements and $B = \{1,2,3,13,12,123\}$.
Take the linear order $2<1<3$ on the ground set of the matroid.
Consider $N = \{3,13,123\}$ so that $m(N)= (m_N(3),m_N(13),m_N(123)) = (3,1,2)$. In the NL-labeling of $N$, there is a descent at $3$. \Cref{claim:min-element} tells us that there will be a descent at $m_{\tau_{13}(N)}(3)$ in $m(\tau_{13}(N))$. We can confirm this by computing 
   \[ m(\tau_{13}(N)) = (m_{\tau_{13}(N)}(2),m_{\tau_{13}(N)}(3),m_{\tau_{13}(N)}(13)) = (2,3,1)\]
Although $m(\tau_{13}(N)) \neq m(N)$, $m(\tau_{13}(N))$ still has a descent at $\tau_{13}(3)$.  

One subtlety of this section is that the relative NL-order of nested sets may not be preserved by taking links.
Take $N' = \{2,3,123\}$, for example.
We have $m(N') = (2,3,1)$ , so that $N$ comes before $N'$ in the NL-order (since $2<3$).
The image of these two nested sets in the image of the link of $Z = 3$ are
$\tau_Z(N) = \{1,12\}$ and $\tau_Z(N') = \{2,12\}$.
The corresponding $m$-vectors are $m(\tau_Z(N)) = (1,2)$ and $m(\tau_Z(N')) = (2,1)$.
Now $\tau_Z(N')$ comes before $\tau_Z(N)$ in the NL-order.
\end{example}

\subsection{Geometry}\label{subsec:geometry}

In this section, we will prove Theorem \ref{thm1} by induction.
The following lemma will be useful for the base case of this inductive proof.

\begin{lemma}\label{lem:dimension-calculation}
   Let $B$ be a building set of $\cL$ such that $|\max(B)|= \rank(M)-1$. The inclusion-maximal nested sets of $(\cL,B)$ are exactly the sets $\max(B) \cup \{X\}$ with $X\in B\setminus \max(B)$. Furthermore, if $X\in B\setminus \max(B)$, then $X$ is an atom of $\cL$.
\end{lemma}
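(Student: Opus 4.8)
The plan is to deduce both statements from the purity result in \Cref{lem:dim-nested-set}. As a preliminary observation, any subset of a nested set that still contains $\max(B)$ is again nested (deleting flats cannot create a collection of pairwise incomparable flats violating condition (2) of \Cref{defn:nested}), so every nested set of $(\cL,B)$ is contained in an inclusion-maximal one; combined with the first part of \Cref{lem:dim-nested-set} this gives that every nested set has at most $\rank(M)$ elements, with equality exactly for the inclusion-maximal ones. Since $|\max(B)|=\rank(M)-1$, any set $\max(B)\cup\{X\}$ with $X\in B\setminus\max(B)$ has exactly $\rank(M)$ elements, hence is inclusion-maximal as soon as it is known to be nested; conversely an inclusion-maximal nested set $N$ has $|N\setminus\max(B)|=\rank(M)-|\max(B)|=1$, so it has this form. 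Thus the first assertion reduces to showing that $\max(B)\cup\{X\}$ is nested for every $X\in B\setminus\max(B)$.

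That verification goes as follows. Condition (1) of \Cref{defn:nested} is immediate. For condition (2), note that any collection $\mathcal A$ of $\ell\ge 2$ pairwise incomparable flats drawn from $\max(B)\cup\{X\}$ must contain some $F\in\max(B)$, since $X$ is the only member that is not in $\max(B)$. Choosing another flat $Z\in\mathcal A$, incomparability forces $Z\not\le F$, so $\bigvee_{W\in\mathcal A}W$ strictly contains $F$; as $F$ is a containment-maximal element of $B$, no flat strictly above $F$ lies in $B$, so $\bigvee_{W\in\mathcal A}W\notin B$. This establishes the first assertion.

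For the second assertion I would argue by contradiction. Fix $X\in B\setminus\max(B)$; since $B\subseteq\cL\setminus\{\emptyset\}$ we have $\rank(X)\ge 1$, and we suppose $\rank(X)\ge 2$. As $M$ is loopless, $X$ contains an atom $A$, and $\rank(A)=1<\rank(X)$ forces $A\lneq X$. Atoms are connected flats, so $A\in B$ by \Cref{prop:backman-danner}, and $A\lneq X\in B$ shows $A$ is not containment-maximal in $B$, so $A\notin\max(B)$; together with $X\notin\max(B)$ and $A\ne X$, the set $\max(B)\cup\{A,X\}$ has $\rank(M)+1$ distinct elements. It is nested: condition (1) is clear, and since $A<X$, any collection of $\ell\ge 2$ pairwise incomparable flats from $\max(B)\cup\{A,X\}$ contains at most one of $A,X$ and therefore contains some $F\in\max(B)$, so exactly as above its join strictly contains $F$ and fails to lie in $B$. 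But a nested set of cardinality $\rank(M)+1$ contradicts the bound from the first paragraph. Hence $\rank(X)=1$, i.e.\ $X$ is an atom of $\cL$.

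The only genuine content is the two nestedness checks, and that is the step I would be most careful with; the rest is bookkeeping with the purity count. As an alternative for the second assertion, one could instead apply equation $(\ast)$ from the proof of \Cref{lem:nl_unique} to the inclusion-maximal nested set $N=\max(B)\cup\{X\}$ (ordering its flats by a linear extension of the forest poset): that identity yields $\rank(X)=|\{Y\in N:Y\le X\}|$, and no $Y\in\max(B)$ satisfies $Y\le X$, since combining $Y\le X$ with $X\le F$ (where $F\in\max(B)$ is the unique containment-maximal flat of $B$ above $X$, \Cref{lem:dim-nested-set}) and $X\ne F$ would give $Y\le X\lneq F$ with $Y,F\in\max(B)$, impossible as $\max(B)$ is an antichain; whence the right-hand side equals $1$.
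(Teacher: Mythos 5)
Your proposal is correct and follows essentially the same route as the paper: the paper declares the first assertion ``immediate'' (you supply the purity count and the nestedness check it implicitly relies on), and for the second assertion the paper uses exactly your contradiction, producing an atom $A \lneq X$ with $A \in B \setminus \max(B)$ so that $\max(B)\cup\{A,X\}$ is a nested set too large to exist. Your alternative argument via equation $(\ast)$ is a valid bonus but not what the paper does.
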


\begin{proof}
     The first claim is immediate. For the second claim, suppose for the sake of contradiction that $X\in B\setminus \max(B)$ and that $X$ is not an atom. Then there is an atom $A$ strictly contained in $X$. By \Cref{prop:backman-danner}, $A\in B$ so $A\in B\setminus \max(B)$. But then $\max(B)\cup \{A,X\}$ is a nested set. This contradicts the first claim.
\end{proof}

We now prove that the normal complex order and the NL-order have the same minimum element in the case when $(\cL,B)$ has a one-dimensional normal complex.  This claim implies that the normal complex order and the NL-order are weakly locally equivalent.

\begin{claim}\label{claim:dimension-1-minimal}
Suppose that a normal complex $\cN= \cN_{\cL, B,\varphi}$ of $(\cL,B)$ is one-dimensional. The normal complex order and the NL-order have the same minimum element.
\end{claim}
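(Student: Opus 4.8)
The plan is to exploit the extreme rigidity forced by the hypothesis, identify both minima explicitly, and check that they coincide. First I would note that $\dim\cN=1$ is equivalent to $|\max(B)|=\rank(M)-1$: each $P_{N,\varphi}$ is obtained from the $\rank(M)$-dimensional cone $\sigma_N$ by imposing the $|\max(B)|$ independent linear equations cutting out the hyperplanes $H_{Y,\varphi}$, $Y\in\max(B)$, so $\dim\cN=\rank(M)-|\max(B)|$. This puts us in the setting of \Cref{lem:dimension-calculation}: the facets of $\Delta(\cL,B)$ are exactly the sets $N_A:=\max(B)\cup\{A\}$ with $A\in B\setminus\max(B)$ an atom of $\cL$. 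Moreover the building-set factorization $\cL\cong\prod_{F\in\max(B)}\cL|_F$ gives $\sum_{F\in\max(B)}\rank(F)=\rank(M)=|\max(B)|+1$, so exactly one block $G\in\max(B)$ has rank $2$ and every other block is a rank-$1$ flat; since a rank-$1$ block contains no atom but itself, the atoms in $B\setminus\max(B)$ are precisely the atoms lying below $G$. I would fix this notation before splitting into the two computations.

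On the combinatorial side I would use \Cref{prop:new_NL_min}, according to which $N_{\min}$ is the unique facet whose NL-labeling is increasing. In the forest poset of $N_A$ (\Cref{obs:forest}) the flat $A$ is the unique child of $G$ and every rank-$1$ block is an isolated root, so $m_{N_A}(F)=F$ for each rank-$1$ block $F$, $m_{N_A}(A)=A$, and $m_{N_A}(G)=\min\bigl(\{\text{atoms}\le G\}\setminus\{A\}\bigr)$. When one plucks leaves in order of label to build $m(N_A)$, the only label exposed after an earlier step is $m_{N_A}(G)$, and it is exposed precisely when $A$ is plucked; hence $m(N_A)$ is increasing if and only if $m_{N_A}(G)>A$, i.e.\ if and only if $A$ is the smallest atom below $G$. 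Writing $A^{*}$ for that atom, this gives $N_{\min}=N_{A^{*}}$.

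On the geometric side I would compute the vertices. Since $A\subseteq G$, we have $\sigma_{N_A}=\spn_{\R}(e_F:F\in\max(B))+\mathrm{cone}(e_A)$, so $v_{N_A}=\sum_{F\in\max(B)}t_Fe_F+s_Ae_A$ with $s_A>0$ --- the positivity being exactly cubicality, i.e.\ $v_{N_A}\in\sigma_{N_A}^{\circ}$. The equations $\langle v_{N_A},e_F\rangle=c_F$ force $t_F=c_F/|F|$ for every block $F\ne G$, so the coordinates of $v_{N_A}$ outside the block $G$ do not depend on $A$. On the block $G$ the coordinates sum to $c_G$ (from $\langle v_{N_A},e_G\rangle=c_G$), hence have mean $c_G/|G|$, while the summand $s_Ae_A$ makes the coordinates of $v_{N_A}$ on $A$ strictly exceed $c_G/|G|$ and those on $G\setminus A$ strictly below it. Since atoms are ordered by their least elements, the least ground-set element of $G$ lies in $A^{*}$; at that coordinate $v_{N_{A^{*}}}$ is strictly above $c_G/|G|$ while every $v_{N_A}$ with $A\ne A^{*}$ is strictly below it, and all the $v_{N_A}$ agree at every earlier coordinate. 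Hence $v_{N_{A^{*}}}$ is the lexicographically largest vertex among the facets, so --- as $\gamma$ is lexicographic on $\cN$ --- the facet $N_{A^{*}}$ maximizes $\langle\,\cdot\,,\gamma\rangle$ and is therefore the minimum of the normal complex order. Combining, both orders have minimum $N_{A^{*}}$.

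The step I expect to be the main obstacle is the third one: one must compute the vertices exactly and, crucially, use cubicality to fix the sign of the $e_A$-perturbation --- without $s_A>0$ the lexicographic comparison breaks down, which is precisely the ``genericity is not enough'' phenomenon flagged in \Cref{sec:examples}. The feature that keeps the argument short is that the vertices $v_{N_A}$ differ only in the coordinates indexed by the unique rank-$2$ block $G$, so their lexicographic comparison is settled already at the first such coordinate, namely the least element of the smallest atom $A^{*}$.
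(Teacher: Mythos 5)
Your proposal is correct and follows essentially the same route as the paper's proof: identify $N_{\min}$ as the unique facet with increasing NL-labeling via \Cref{prop:new_NL_min}, write each vertex as $v_{N_A}=\sum_F t_F e_F + s_A e_A$ with $s_A>0$ forced by cubicality, and compare the vertices lexicographically at the first coordinate of the smallest atom. Your extra observation that exactly one block of $\max(B)$ has rank $2$ and that every atom of $B\setminus\max(B)$ lies below it is a clean way to carry out the ``straightforward, although laborious'' coordinate check that the paper leaves to the reader; the only caveat is that your last step (lexicographically largest vertex $\Rightarrow$ largest $\langle\,\cdot\,,\gamma\rangle$ $\Rightarrow$ minimum of the normal complex order) uses the convention of the paper's examples and of its own proof of this claim, which is opposite to the inequality as literally written in \Cref{def:normal-complex-order}.
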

\begin{proof}
    If $\cN$ is one-dimensional, then $|\max(B)| = \rank(M)-1$. Thus by \Cref{lem:dimension-calculation}, the set $B\setminus \max(B)$ consists entirely of atoms of $\cL$. Let $A_0$ be the smallest atom inside of $B\setminus \max(B)$. By \Cref{lem:dimension-calculation}, $N_{\min} = \max(B) \cup \{A_0\}$ is an inclusion-maximal nested set of $(\cL,B)$. We claim that $N_{\min}$ is the minimum nested set of the NL-order. By \Cref{prop:new_NL_min}, it suffices to check that $m(N_{\min})$ is increasing. This is true essentially by definition. The only thing we need to check is that, if $X$ is the flat of $\max(B)$ containing $A_0$, then $m_{N_{\min}}(A_0)=A_0< m_{N_{\min}}(X)$. This, in turn, follows directly from our choice of $A_0$.

    We now check that $N_{\min}$ is minimal in the normal complex order. Let $c\in \R^B$ be the vector defining $\cN$ as described in \Cref{lem:c-is-enough}. Let $N = \{A\} \cup \max(B)$ be an inclusion-maximal nested set of $(\cL,B)$. As the vertex $v_N$ lives in the relative interior of $\sigma_N$, we can write $v_N$ as 
    \[v_N = \lambda_A e_A + \sum_{X\in \max(B)} \gamma_{A,X} E_X \]
    where $\lambda_A$ is a positive real number and each $\gamma_{A,X}$ is a (possibly negative) real numbers such that 
    \begin{equation}\label{eq:equalities}
        \langle v_N, e_X \rangle = c_X \text{ for all } X\in N.
    \end{equation} 
    We include the subscripts to indicate that $\lambda_A$ depends on $A$ and that $\gamma_{A,X}$ depends on $A$ and $X$.

    Let $i_0$ be the smallest element of $E$ contained in $A_0$. We claim that $(v_{N_{\min}})_{i_0} > (v_{N})_{i_0}$ and $(v_{N_{\min}})_{j} = (v_{N})_{j}$ for all inclusion-maximal nested sets $N\neq N_{\min}$ and indices $j<i_0$. If we prove this, it will imply that $N_{\min}$ is the minimum nested set in the normal complex order. 

     We now unwind the equalities of \Cref{eq:equalities}. Again, let $N= \{A\} \cup \max(B)$ be a nested set. Let $X$ be the unique flat of $\max(B)$ containing $A$, which is guaranteed to exist by \Cref{lem:dim-nested-set}. There are three cases to analyze:
     \begin{enumerate}
     \item If $i$ is an index not contained in $X$, then $(v_N)_i = c_Y/|Y|$ where $Y$ is the flat of $\max(B)$ containing $i$. 
     \item If $i\in X\setminus A$, then $(v_N)_i < c_X/|X|$.
     \item If $i\in A$, then $(v_N)_i > c_X/|X|$
     \end{enumerate}
     It is straightforward, although laborious, to check that these three cases imply our claim. We leave the details of this check to the reader.
\end{proof}

\begin{claim}\label{claim:min-match}
    The normal complex order and the $NL$-order have the same minimum nested set.
\end{claim}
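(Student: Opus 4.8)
The plan is to prove this by induction on the dimension $d$ of $\cN$; recall that $d=|N\setminus\max(B)|$ for any facet $N$ of $\Delta(\cL,B)$. Fix once and for all a lexicographic vector $\gamma$ on $\cN$, so that the normal complex order $<_{\cN}$ is defined. For the base cases, if $d=0$ then $\Delta(\cL,B)$ has the single facet $\max(B)$ and there is nothing to prove, while if $d=1$ the statement is precisely Claim \ref{claim:dimension-1-minimal}. So assume $d\geq 2$ and that the assertion holds for every normal complex of dimension less than $d$.

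In the inductive step, let $N$ denote the minimum of $<_{\cN}$. By Proposition \ref{prop:new_NL_min} together with Lemma \ref{lem:nl_unique} it is enough to show that the NL-labeling $m(N)$ is increasing, since then $N$ coincides with the unique nested set $N_{\min}$ having increasing NL-labeling, which is the minimum of the NL-order. Suppose toward a contradiction that $m(N)$ has a descent at some flat $X\in N$. Since $d\geq 2$, the set $N\setminus\max(B)$ has at least two elements, so I may choose $Z\in N\setminus\max(B)$ with $Z\neq X$, and then pass to the link of $Z$. By Lemma \ref{lem:intersection}, $\cN_Z:=\cN\cap H_{Z,\varphi}$ is a normal complex of $(\cL|_Z\times\cL^Z,\,B|_Z\sqcup B^Z)$ of dimension $d-1$, and $\tau_Z$ carries the facets of $\Delta(\cL,B)$ containing $Z$ bijectively onto the facets of the corresponding smaller nested set complex. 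Because $N$ is the global $<_{\cN}$-minimum and contains $Z$, it is in particular the $<_{\cN}$-minimum among facets containing $Z$; Proposition \ref{prop:lex_link} shows that $\gamma$ remains lexicographic on $\cN_Z$ and that $\tau_Z$ is order-preserving for the two normal complex orders, so $\tau_Z(N)$ is the minimum of the normal complex order of $\cN_Z$. Applying the inductive hypothesis to $\cN_Z$, this minimum is also the NL-minimum of $\cN_Z$, so by Proposition \ref{prop:new_NL_min} its NL-labeling $m(\tau_Z(N))$ is increasing. On the other hand, Claim \ref{claim:min-element} applies to $N$, $X$, and $Z$ (with $X\neq Z$): it guarantees that $m(\tau_Z(N))$ has a descent. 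This contradiction shows that $m(N)$ is increasing, hence $N=N_{\min}$, completing the induction.

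The step I expect to be the crux is the behavior of the two orders under the link map $\tau_Z$. On the geometric side Proposition \ref{prop:lex_link} is exactly what is needed: the normal complex order is respected by $\tau_Z$, so the global minimum of $\cN$ restricts to the minimum of the link $\cN_Z$. For the NL-order no such statement is available --- Example \ref{localtauexample} shows that $\tau_Z$ need not even preserve the relative NL-order of two facets --- so one cannot argue directly that ``the NL-minimum restricts to the NL-minimum.'' What makes the argument go through is that we only need a one-directional fact about the NL-order to transfer to the link, namely that a descent of $m(N)$ away from $Z$ survives in $m(\tau_Z(N))$ (Claim \ref{claim:min-element}); combined with the inductively known increasing NL-labeling of the link's minimum nested set, this yields the desired contradiction. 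The only other point that needs attention is making sure the linking flat $Z$ can be chosen different from the descent flat $X$, which is precisely why the low-dimensional cases $d\leq 1$ are peeled off as base cases.
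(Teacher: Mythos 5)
Your proof is correct and follows essentially the same route as the paper's: induction on dimension with the same base cases, choosing a linking flat $Z$ distinct from the descent flat $X$, and playing Claim \ref{claim:min-element} (descents survive in the link) against Proposition \ref{prop:lex_link} (the normal complex order is preserved by $\tau_Z$) together with the inductive hypothesis. The paper phrases the contradiction contrapositively (a smaller facet in the link pulls back to a smaller facet upstairs), but this is the same argument.
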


\begin{proof}
Let $\cN$ be a normal complex of $(\cL,B)$. We do induction on the dimension of $\cN$.
There are two base cases to consider: dimension $0$ (trivial, since it is a point) and dimension $1$, which follows from \Cref{claim:dimension-1-minimal}.

For the general case, let $N_{\min}$ be the minimum nested set in the NL-order and $N_\gamma$ be the minimum nested set in the normal complex order.
Suppose for contradiction that $N_{\min} \not= N_{\gamma}$ and hence that $N_\gamma$ is not minimal in the NL-ordering. By \Cref{prop:new_NL_min}, there are some flats $X,Y\in {N_\gamma}$ with $m_{N_\gamma}(X)>m_{N_\gamma}(Y)$ and $m({N_\gamma})= (\ldots, m_{N_\gamma}(X), m_{N_\gamma}(Y),\ldots )$. Choose any flat $Z\in {N_\gamma}$ which is not in $\max(B)$ and not equal to $X$.
Such a flat exists because the normal complex is at least two-dimensional. 
Let $\tau_Z$ be the bijection from \Cref{thrm:wondertopes}. By \Cref{claim:min-element}, the NL-labeling of $\tau_Z(N_\gamma)$ contains a descent. Thus by \Cref{prop:new_NL_min}, $\tau_Z(N_\gamma)$ is not minimal in the NL-order of $(\cL\vert_Z \times \cL^Z, B\vert_Z \sqcup B^Z)$. By the first part of \Cref{lem:intersection}, the normal complex $\cN_Z$ of $(\cL\vert_Z \times \cL^Z, B\vert_Z \sqcup B^Z)$ is smaller-dimensional than $\cN$.  Thus our induction hypothesis tells us that $\tau_Z(N_\gamma)$ is not minimal in the normal complex order of $(\cL\vert_Z \times \cL^Z, B\vert_Z \sqcup B^Z)$ and there exists a nested set $N'$ of $(\cL,B)$ such that $\tau_Z(N')$ precedes $\tau_Z(N_\gamma)$ in the normal complex order of $(\cL\vert_Z \times \cL^Z, B\vert_Z \sqcup B^Z)$. By \Cref{prop:lex_link}, this contradicts our assumption that $N_\gamma$ is the minimum nested set in the normal complex order of $(\cL,B)$.
\end{proof}

\begin{proof}[Proof of Theorem \ref{thm1}]
We proceed by induction on the dimension of the normal complex.
There are two base cases: dimensions $0$ (which is trivial, since the complex is a point) and $1$.
When the normal complex has dimension $1$, \Cref{lem:dimension-calculation} tells us that the nested set complex looks like a star with center $\max(B)$.
In particular \emph{any} order is a shelling order and thus the normal complex order is a shelling order of the nested set complex.

For the inductive step, assume that $N' <_{\gamma} N$. We'll show that there exists $N'' <_{\gamma} N$ such that
\[
N \cap N' \subseteq N \cap N''  \quad \text{and}\quad \vert N\cap N''\vert = \vert N \vert -1\,.
\]
We consider two cases, based on whether or not $N\cap N'= \max(B)$.
\begin{enumerate}
    \item {\sf Case 1}: $N\cap N'$ equals $\max(B)$.
Since $N$ is not minimal in the normal complex order, \Cref{claim:min-match} tells us that it is not minimal in the NL-order, i.e., $N \not= N_{min}$. By \Cref{prop:new_NL_min}, there are some flats $X,Y\in N$ with $m_N(X)>m_N(Y)$ and $m(N)= (\ldots, m_N(X), m_N(Y),\ldots )$. Choose any flat $Z\in N$ which is not in $\max(B)$ and not equal to $X$. Such a flat exists because the normal complex $\cN$ is at least two dimensional. Let $\tau_Z$ be the bijection from \Cref{thrm:wondertopes}. By \Cref{claim:min-element}, the NL-labeling of $\tau_Z(N)$ contains a descent. Thus by \Cref{prop:new_NL_min}, $\tau_Z(N)$ is not minimal in the NL-order of $(\cL\vert_Z \times \cL^Z, B\vert_Z \sqcup B^Z)$. By \Cref{claim:min-match}, $\tau_Z(N)$ is also not minimal in the normal complex order of $\Delta(\cL\vert_Z \times \cL^Z, B\vert_Z \sqcup B^Z)$. By the first part of \Cref{lem:intersection}, the normal complex of $(\cL\vert_Z \times \cL^Z, B\vert_Z \sqcup B^Z)$ is smaller-dimensional than $\cN$. Thus our inductive hypothesis tells us that there exists a nested set $N''$ of $(\cL,B)$ such that: 
\begin{itemize}
\item $\tau_Z(N'')$ is smaller than $\tau_Z(N)$ in the normal complex order of $(\cL\vert_Z \times \cL^Z, B\vert_Z \sqcup B^Z)$ and
\item $\vert \tau_Z(N'')\cap \tau_Z(N)\vert =\vert \tau_Z(N)\vert -1 $.
\end{itemize}
\noindent By \Cref{prop:lex_link}, the first bullet point lets us conclude that $N''$ is smaller than $N$ in the normal complex order.  As $\tau_Z$ induces an isomorphism of nested set complexes, the second bullet implies that $\vert N'' \cap N\vert  =  \vert N \vert -1$.

\item {\sf Case 2}: $(N\cap N')\setminus \max(B)\not=\emptyset$.
Take $Z \in (N\cap N')\setminus \max(B)$.
From Lemma \ref{lem:intersection}, the intersection of $\cN$ with $H_{X,\varphi}$ (i.e., the facet of $\cN$ defined by $H_{X,\varphi}$) is a smaller normal complex.
Namely, a normal complex of $(\cL\vert_Z \times \cL^Z, B\vert_Z \sqcup B^Z)$. Let $\tau_Z$ be the bijection from \Cref{thrm:wondertopes}. By \Cref{prop:lex_link}, $\tau_Z(N')$ precedes $\tau_Z(N)$ in the normal complex order of $(\cL\vert_Z \times \cL^Z, B\vert_Z \sqcup B^Z)$. By the first part of \Cref{lem:intersection}, the normal complex of $(\cL\vert_Z \times \cL^Z, B\vert_Z \sqcup B^Z)$ is smaller-dimensional than $\cN$. Thus our induction hypothesis tells us that there exists some nested set $N''$ such that $\tau_Z(N')<_{\cN} \tau_Z(N'') <_{\cN} \tau_Z(N)$, $\tau_Z(N')\cap \tau_Z(N) \subseteq \tau_Z(N'') \cap \tau_Z(N)$ and $\vert \tau_Z(N'')\cap \tau_Z(N)\vert =\vert \tau_Z(N)\vert -1 $. For identical reasons to the first case, this implies our claim.
\end{enumerate}
\end{proof}

\bibliography{references}{} 
\bibliographystyle{plain}

\end{document}